\newtheorem{lemma}{Lemma}
\newtheorem{proposition}{Proposition}
\newtheorem{theorem}{Theorem}
\theoremstyle{definition}
\newtheorem{remark}{Remark}
\newcommand{\R}{{\mathbb R}}
\newcommand{\pa}{\partial}
\newcommand{\be}{\begin{equation}}
\newcommand{\ee}{\end{equation}}
\newcommand{\beq}{\begin{eqnarray}}
\newcommand{\eeq}{\end{eqnarray}}
\newcommand{\beqq}{\begin{eqnarray*}}
\newcommand{\eeqq}{\end{eqnarray*}}
\begin{document}

\title{\bf Pseudo-peakons and Cauchy analysis for an integrable fifth-order equation of Camassa-Holm type}
\author{
    Enrique G. Reyes$^1$\footnote{email: enrique.reyes@usach.cl}, \quad
    Mingxuan Zhu$^{2}$\footnote{email: mxzhu@qfnu.edu.cn}, \quad and \quad
Zhijun Qiao$^{3}$\footnote{email: zhijun.qiao@utrgv.edu}\\%\quad and\quad
    %Zhaoyang $\mbox{Yin}^{1,3}$ \footnote{email: mcsyzy@mail.sysu.edu.cn}\\
    $^{1}$Departamento de Matem\'atica y Ciencia de la Computaci\'on,\\
     Universidad de Santiago de Chile (USACH)\\ Casilla 307 Correo 2, Santiago, Chile\\
    $^2$School of Mathematical Sciences, Qufu Normal University,\\ Qufu, Shandong 273100, China\\ %\\
   $^{3}$School of Mathematical and Statistical Sciences,\\ The University of Texas Rio Grande Valley, Edinburg,
   TX 78539, USA%\\     $^{4}$Interdisciplinary Research Institute, Faculty of Science\\ Beijing University of Technology,  Beijing 100124, China
}
\date{}
\maketitle
 \baselineskip=15pt
%\begin{document}

%\maketitle

\begin{abstract}
In this paper, we discuss integrable higher order equations {\em of Camassa-Holm (CH) type}.
%The whole CH-type hierarchy 
Our higher order CH-type equations are ``geometrically integrable", that is, they describe one-parametric families 
of pseudo-spherical surfaces, in a sense explained in Section 1, and they are
%is shown 
integrable in the sense of zero curvature formulation ($\simeq$ Lax pair) with infinitely many local conservation 
laws. The major focus of the present paper is on a specific fifth order CH-type equation 
admitting  {\em pseudo-peakons} solutions, that is, weak bounded solutions %which we show bounded %solutions 
with differentiable 
first derivative and continuous and bounded second derivative, but such that any 
higher order derivative blows up. Furthermore, we investigate 
the Cauchy problem of this fifth order CH-type equation on the real line and prove
local well-posedness under the initial conditions $u_0 \in H^s(\R)$, $s > 7/2$.
In addition, we study conditions for global well-posedness in $H^4(\R)$ as well as
conditions causing local solutions to blow up in a finite time. We conclude our paper with some comments on the
geometric content of the high order CH-type equations.
\end{abstract}

{\bf Keywords}: Camassa-Holm equation; equations of pseudo-spherical type; local conservation laws; pseudo-peakons;
fifth order CH-type equation; well-posedness.

{\bf AMS Classification}: 35G25, 35L05, 35Q53. %35Q53

\section{Introduction}

In this paper we present a family of fifth-order integrable
equations which generalize ---in a very precise sense discussed below---  the celebrated Camassa-Holm (CH)
equation (see \cite{reyes:CH})
\begin{equation} \label{ch00}
\tilde{m}_{t} = - \tilde{m}_{x}\,u - 2 \,\tilde{m}\,u_{x}\; , \quad \quad \tilde{m} =
u_{xx}-u \; .
\end{equation}
Here and at the beginning of Section 2 we use $\tilde{m}$ instead of $m$ when considering the CH
equation in order to keep the signs used in \cite{reyes:R4}, and not to cause confusion with the conventions
used in the main body of this paper.

Our original motivation for undertaking this project was the observation that the Camassa-Holm
equation can be understood as an Euler equation for the inertia
operator $A_2 =\partial_{xx} - 1$ on the Lie algebra $Vect(S^1)$
of the (Fr\'echet) Lie group $Diff(S^1)$, and that, in turn, the latter equation can
be interpreted geometrically as determining geodesics for a
Riemannian metric on $Diff(S^1)$ defined via $A_2\,$, see for instance \cite{A,KW,M}, and
also \cite{GPoR}\footnote{A similar observation is valid for a group of diffeomorphisms of the line that
is slightly more difficult to describe, see \cite{C_fourier}.}.  Now, the operator
$A_2$ appears naturally in the zero curvature representation of CH, see Equations
(\ref{reyes:l40}) and (\ref{reyes:l4}) below. A rather obvious question that arises
is whether considering different inertia operators $A_n$ in (\ref{reyes:l40}) and
(\ref{reyes:l4}) would yield equations that may be of interest for the theory
of fluids and may determine interesting geometry on diffeomorphism groups.

We can think of two immediate objections to considering this question. First:  
Wouldn't changing just the inertia operator used in (\ref{reyes:l40}) and
(\ref{reyes:l4}) yield equations ``equivalent" to CH? Second: Wouldn't we obtain only
uninteresting equations, since in the paper \cite{CP} Constantin and Kolev proved  that
in the case of $Vect(S^1)$ and the (Fr\'echet) Lie group $Diff(S^1)$ the only
{\em bi-Hamiltonian} equations (for some natural Poisson structures) arising from inertia operators
$A_n = \sum (-1)^k \partial_x^{2k}$ are the inviscid Burgers equation ($k=0$) and the Camassa-Holm
equation ($k=1$)?

{
The way we have found of replying to these objections is through geometric considerations.
It was proven in \cite{reyes:R4} that the Camassa-Holm (\ref{ch00}) describes pseudo-spherical
surfaces, in the following sense:

\noindent Define three one-forms $\omega^i$, $i=1,2,3$, as follows:
\begin{eqnarray}
\omega^1 & = & \left(\frac{1}{2}\lambda + \frac{1}{2\lambda} + \tilde{m}  \right) dx +
  \left(- \frac{1}{2\lambda} u+ \frac{u}{2}\lambda - u \tilde{m} - \frac{1}{2} - \frac{\lambda^2}{2} \right) dt
  \label{o1}\\
\omega^2 & = & - u_x dt \label{o2}\\
\omega^3 & = & \left(- \frac{1}{2}\lambda + \frac{1}{2\lambda} - \tilde{m} \right) dx +
 \left(- \frac{1}{2\lambda} u - \frac{u}{2}\lambda + u \tilde{m} - \frac{1}{2} + \frac{\lambda^2}{2} \right) dt\; .
 \label{o3}
\end{eqnarray}
Then, the structure equations of a pseudo-spherical surface,
\begin{equation}
d \omega^1 = \omega^3 \wedge \omega^2\; , \quad \quad d \omega^2 = \omega^1 \wedge \omega^3\; , \quad \quad
d \omega^3 = \omega^1 \wedge \omega^2\; , \label{o4}
\end{equation}
are satisfied if and only if $u(x,t)$ and $\tilde{m}(x,t)$ satisfy the Camassa-Holm equation\footnote{Generally speaking, we say that a scalar differential equation $\Delta=0$ for a function $u(x,t)$ describes pseudo-spherical surfaces if there exist one-forms $\omega^i$, $i=1,2,3$, depending on
finite jets of $u(x,t)$, such that Equations (\ref{o4}) hold on solutions to $\Delta=0$, see
\cite{reyes:ChT}. If the one-forms $\omega^i$ depend non-trivially on a parameter, we say that the equation
$\Delta=0$ is {\em geometrically integrable}, see \cite{reyes:ChT,reyes:R4}.}. The existence
of this geometric structure is quite useful for applications: it allows us to obtain quadratic pseudo-potentials, a
zero curvature formulation for CH, a sequence of non-trivial local conservation laws, non-local symmetries, and a
geometrically motivated modified Camassa-Holm equation, see \cite{reyes:R4,HHR,G-R}; see also \cite{s-dual,GH,
Qiao-CMP} for independent articles showing zero curvature representations of CH as well. 

Now, we can modify the one-forms (\ref{o1})-(\ref{o3}) in such a way that the
structure equations (\ref{o4}) are still equivalent to a scalar equation, by keeping the
$\lambda$ dependence unchanged and modifying the coefficients of the Laurent polynomials in
$\lambda$ appearing in (\ref{o1})-(\ref{o3}). We would then say that {\em the new scalar
equation generalizes CH}. Certainly there are restrictions in order for this plan to make sense:
there is no point in changing $u$ for a differentiable function of $u$, for example, since in this case we would 
get an equation obviously equivalent to CH. {\em But, if we consider modifications of $(\ref{o1})$-$(\ref{o3})$ 
depending on
higher order jets of $u$, we would obtain ---in principle--- new equations, not related to CH by
invertible local transformations.}

But perhaps we can change $u$ in completely arbitrary
ways, and therefore our idea is just too general and naive to be of interest? No. Hereafter, we restrict 
ourselves to fifth order equations; the following geometric theorem, to be proven in a separate paper, states 
precisely how we can modify the coefficients of the Laurent polynomials in $\lambda$  appearing in
(\ref{o1})-(\ref{o3}) in order for the structure equations (\ref{o4}) to be equivalent to a
scalar differential equation.

\begin{theorem}
The most general fifth order equation of pseudo-spherical type that is equivalent to the
structure equations $(\ref{o4})$ with associated one-forms
\begin{eqnarray*}
\omega^1 & = & \left(a \lambda + \frac{d}{\lambda} + F_{11} \right)dx +
\left(\frac{1}{\lambda} G_{11} + G_{12} +
\lambda G_{13} + \lambda^2 G_{14} \right) dt \\
\omega^2 & = & H(u,u_x,u_{xx},u_{xxx}) dt \\
\omega^3 & = & \left(-a \lambda + \frac{d}{\lambda} + F_{31} \right)dx +
\left(\frac{1}{\lambda} G_{31} + G_{32} +
\lambda G_{33} + \lambda^2 G_{34} \right) dt \; ,
\end{eqnarray*}
in which the functions $F_{ij}$ and $G_{ij}$ depend on $u,u_x,u_{xx},u_{xxx},u_{xxxx}$, is
\begin{eqnarray}
4 a d D_t G_{13} - D_t D_x^2 G_{13} &= &
 \frac{2}{a} (D_xG_{13})(D_x^2 G_{13}) + \frac{1}{a} (G_{13} + G_{131}) D_x^3 G_{13} \label{se4} \\
&  &  - 12 d G_{13} D_x G_{13} - 8 d G_{131} D_x G_{13}\; , \nonumber
\end{eqnarray}
%in which 
where $G_{13}$ is an arbitrary function of $u, u_x, u_{xx}$ satisfying $G_{13,u_{xx}} \neq 0$, $D_x$ indicates total derivative with respect to $x$, $G_{131}$ is a constant number, and $a\not=0,d$ are two  arbitrary real constants.

\noindent The one-forms $\omega^i = f_{i1} dx + f_{i2} dt$ associated to Equation $(\ref{se4})$ are
determined by
\begin{eqnarray*}
f_{11} & = & {a}{\lambda} + \frac{d}{\lambda} + \frac{1}{2 d G_{34}}\left[ D_x^2 G_{13}
- 4 a d G_{13} - 2 a d G_{131} \right] \\
f_{12} & = & -\frac{d}{a \lambda}(G_{13}+G_{131}) - \frac{1}{2 a d G_{34}}(G_{ 13} + G_{131})D_x^2 G_{13} + \frac{1}{G_{34}}(G_{13}+G_{131})(2G_{13}+G_{131})  \nonumber \\
   &   &  ~  - \frac{d G_{34}}{a} + G_{13}\lambda - G_{34}\lambda^2  \\
f_{21} & = & 0 \\
f_{22} & = & - \frac{1}{a} D_x G_{13} \\
f_{31} & = & -{a}{\lambda} + \frac{d}{\lambda} - \frac{1}{2 d G_{34}}\left[ D_x^2 G_{13}
- 4 a d G_{13} - 2 a d G_{131} \right] \\
f_{32} & = & -\frac{d}{a \lambda}(G_{13}+G_{131}) + \frac{1}{2 a d G_{34}}(G_{ 13} + G_{131})D_x^2 G_{13} -\frac{1}{G_{34}}(G_{13}+G_{131})(2G_{13}+G_{131})  \nonumber \\
   &   &  ~  - \frac{d G_{34}}{a} - G_{13}\lambda + G_{34}\lambda^2
\end{eqnarray*}
where $G_{34}$ is, if fact, a non-zero constant number.
\end{theorem}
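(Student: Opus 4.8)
The plan is to turn the exterior structure equations (\ref{o4}) into a system of scalar PDEs and then solve that system by matching powers of the spectral parameter $\lambda$. Write each one-form as $\omega^i = f_{i1}\,dx + f_{i2}\,dt$ with $u=u(x,t)$, so that $d\omega^i = (D_x f_{i2}-D_t f_{i1})\,dx\wedge dt$ and each wedge product $\omega^j\wedge\omega^k$ equals $(f_{j1}f_{k2}-f_{j2}f_{k1})\,dx\wedge dt$. Because $\omega^2=H\,dt$ forces $f_{21}=0$, the three equations in (\ref{o4}) reduce to
\begin{align*}
D_x f_{12}-D_t f_{11} &= f_{31}\,H,\\
D_x H &= f_{11}f_{32}-f_{12}f_{31},\\
D_x f_{32}-D_t f_{31} &= f_{11}\,H.
\end{align*}
It is convenient to replace the first and third equations by their sum and difference, and to split the $\lambda$-coefficients into parts symmetric and antisymmetric under $a\mapsto -a$; with $f_{11}\mp f_{31}$ and $f_{12}\mp f_{32}$ as new unknowns the right-hand sides factor neatly, which keeps the later bookkeeping manageable.

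The core of the argument is the second step: substitute the Laurent-polynomial ansatz, so that each scalar equation becomes a polynomial identity in $\lambda$ (the powers run overall from $\lambda^{-2}$ through $\lambda^{3}$), and, since $\lambda$ is arbitrary, set every coefficient to zero. This produces a hierarchical overdetermined system that I would solve by descending in the power of $\lambda$. The top coefficients give $D_x G_{14}=D_x G_{34}=0$, so $G_{14}$ and $G_{34}$ are constants --- this is the origin of the assertion that $G_{34}$ is in fact a nonzero constant. The coefficient of $\lambda^{1}$ in the first equation reads $D_x G_{13}=-a\,H$, which immediately fixes $f_{22}=H=-\tfrac{1}{a}D_x G_{13}$. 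Continuing downward, each remaining coefficient expresses one of $F_{11},F_{31},G_{11},G_{12},G_{31},G_{32}$ algebraically (after one or two integrations in $x$) in terms of $G_{13}$, its total derivatives $D_x G_{13},\,D_x^2 G_{13}$, and the constants $a,d,G_{34}$; the integrations in $x$ introduce a constant of integration, which is exactly the constant $G_{131}$ appearing in the statement, and the antisymmetric structure forces $F_{31}=-F_{11}$. Assembling these expressions reproduces precisely the listed formulas for $f_{11},f_{12},f_{31},f_{32}$.

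The hypothesis $G_{13,u_{xx}}\neq 0$, combined with the restriction that every $F_{ij}$ and $G_{ij}$ depends on at most $u_{xxxx}$, is what makes the equation genuinely fifth order and forces the reduction to be unique rather than merely consistent: it permits dividing by $G_{34}$ and by $G_{13,u_{xx}}$ at each stage and rules out degenerate branches. The last, lowest-order compatibility condition --- the one coefficient not consumed in defining the auxiliary functions --- then collapses to the single evolution equation (\ref{se4}).

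I expect the main obstacle to be this final step, namely verifying that after all substitutions the leftover equations are mutually consistent and reduce to (\ref{se4}) \emph{without imposing any further constraint on $G_{13}$}. This demands careful tracking of how the total derivative $D_x$ interacts with $G_{13}(u,u_x,u_{xx})$ via the chain rule on the jet space, and a check that the terms on the right of (\ref{se4}), in particular those proportional to $d\,G_{13}D_xG_{13}$ and $d\,G_{131}D_xG_{13}$, arise correctly from the cross terms $f_{11}f_{32}-f_{12}f_{31}$. Showing that (\ref{se4}) is the \emph{most general} such equation, and not one special case among several, is the genuinely delicate part of the proof.
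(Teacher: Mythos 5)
You should first be aware that the paper itself contains no proof of this theorem: it is stated in Section 1 as a result ``to be proven in a separate paper,'' so there is no internal argument to compare your attempt against, and it must be judged on its own. On that basis, your setup and strategy are sound: this is the standard Chern--Tenenblat procedure (\cite{reyes:ChT}, and \cite{reyes:R4} for the CH equation itself) of reducing the structure equations (\ref{o4}) to scalar equations and matching powers of $\lambda$. The steps you actually perform are correct. Since $f_{21}=0$, the structure equations are exactly your three scalar equations; the top coefficients force $D_x G_{14}=D_x G_{34}=0$, which (because the $G_{ij}$ carry no explicit $(x,t)$-dependence) makes $G_{14}$ and $G_{34}$ constants; the $\lambda^3$ coefficient of the middle equation forces $G_{14}=-G_{34}$; the $\lambda$ coefficient of the first equation gives $H=-\frac{1}{a}D_xG_{13}$, matching the stated $f_{22}$; and the evolution equation can only come from the $\lambda^0$ coefficients, the only ones containing $D_t$ of unknown functions --- indeed $D_t F_{11}$ computed from the stated $f_{11}$ is $-\frac{1}{2dG_{34}}$ times the left-hand side of (\ref{se4}).

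The gap is the one you name yourself: everything past these first steps is announced rather than done. The closed-form expressions for $F_{11}, F_{31}, G_{11}, G_{12}, G_{31}, G_{32}$, the appearance of the single integration constant $G_{131}$, the verification that the two $D_t$-bearing coefficients (from the first and third equations) collapse to one and the same equation (\ref{se4}) with no residual constraint on $G_{13}$, and --- crucially --- the proof that the hierarchy admits no other branches, which is what ``most general'' means here, are all left as expectations. One technical point also needs repair: it is too strong to ``set every coefficient to zero'' on the grounds that $\lambda$ is arbitrary; equivalence with a single scalar equation only requires the coefficients free of $t$-derivatives to vanish identically on the jet space, while the $D_t$-bearing coefficients need vanish only modulo the equation. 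You implicitly recover this distinction later (``the one coefficient not consumed''), but a genuine classification proof must organize the case analysis around exactly this point and rule out degenerate branches (for instance, configurations in which a leading factor such as $G_{34}$ or $G_{13,u_{xx}}$ vanishes). Without that analysis, what you have is a consistent reconstruction of the stated formulas, not a proof that (\ref{se4}) is the most general equation of this type.
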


We say that the large non-trivial family of fifth order equations of geometric origin (\ref{se4}) are 
``higher order CH-type equations", or, that they are ``fifth order manifestations" of CH. 

Let us now answer the objections raised
at the beginning of this section. Equations (\ref{se4}) {\em are not} equivalent to CH, as they
are only related to CH by non-invertible differential substitutions:
if we set $v = G_{13}(u, u_x, u_{xx})$ in (\ref{se4}), we find that $v$ satisfies (up to some parameters) the
CH equation, { but there is no reason to expect that the ``internal" properties of (\ref{se4}) and 
CH are the same}. Also, the existence of the one-forms $\omega^i$ satisfying (\ref{o4}) allow us
to check using \cite{reyes:ChT} that Equations (\ref{se4}) admit zero curvature representations.
}
The existence of these zero
curvature representations imply the integrability of our
corresponding ``higher order CH-type equations", not in the sense of
being bi-Hamiltonian, but in the sense of possessing an infinite
number of non-trivial local conservation laws. We do lose the
interpretation of our higher order CH-type equations as standard Euler equations but, as we show
in the last section of this work, the equations considered here can also be connected with the geometry of $Diff(S^1)$.

\smallskip

{\color{black} In this paper we consider the following fifth
order equation of CH-type arising from $(\ref{se4})\,$:}
\begin{eqnarray}
\left\{\begin{array}{l}
m_{t}=-m_xv-2mv_x, \\ %=k_1\left[m\partial^{-1}_{x}mb_{-2,x}\right]_x+\frac{1}{2}k_2(2m
v=u-u_{xx}=(1-\partial^2)u=-A_2(u),\\
m=v-v_{xx}=(1-\partial^2)^2u=A_4(u)\; . %b_{-2,x}+m_xb_{-2}),\\.%m_y=b_{-2,x}-b_{-2,xxx}.
\end{array}\right.
\label{FOCHInt1}
\end{eqnarray}
The expansion of (\ref{FOCHInt1}) reads
\begin{eqnarray}
u_{t}-2u_{{xxt}}+u_{xxxxt} &=& -3uu_{x}+4uu_{xxx}-uu_{xxxxx}+5u_{x}u_{xx}-2u_{x}u_{xxxx} \nonumber\\
& & -6u_{xx}u_{{xxx}}
+2u_{xxx}u_{xxxx}
+u_{xx}u_{xxxxx}\; .
\label{ch410}
\end{eqnarray}

This equation is obtained from (\ref{se4}) if we choose $G_{13} = u - u_{xx}$, $a=1$, $d=1/4$ and
$G_{131} =0$. { We decided to make this choice partially motivated by \cite{LQ,ZCJQ} where the
authors study the model }
\begin{equation} \label{foch0}
m_{t} + u m_x + b u_x m = 0\; , \quad \quad m = (1 - \alpha^2 \partial_x^2)(1-\beta^2 \partial_x^2) u\; 
.
\end{equation}
We consider Equation (\ref{FOCHInt1}) in Section 2, where we show that it admits a Lax pair, as anticipated in 
this section, and that it possesses an infinite number of non-trivial local conservation laws; we also present
its Lie algebra of classical symmetries and comment on (the no existence of) nonlocal
symmetries of a specific type\footnote{We insist: Equation
(\ref{FOCHInt1}) (or (\ref{ch410})) is related to the standard CH equation
\begin{equation}
\overline{m}_{t} = - \overline{m}_{x}\,v - 2 \,\overline{m}\,v_{x}\; , \quad \quad \overline{m} = v-v_{xx} \; . \label{ch-int}
\end{equation}
by a differential substitution (the transformation $v = (1-\partial^2)u$, $\overline{m} = (1-\partial^2)m$), but 
this transformation is not invertible as a mapping between (subsets of) finite order jet bundles, and so Equations
(\ref{ch-int}) and (\ref{FOCHInt1}) are not equivalent. We can study some aspects of the former
equation using results on the latter one (see for instance our remarks at the end of Section 3),
but these two equations are different in essential ways, as our computations of symmetries and conservation laws
show. We will highlight differences and similarities between (\ref{ch-int}) and (\ref{FOCHInt1}) throughout the 
paper.}. Then, {\em in Section $3$  we show that Equation $(\ref{FOCHInt1})$
possesses pseudo-peakons, this is, bounded weak travelling wave solutions with continuous first derivative and 
continuous second derivative, but whose higher order derivatives blow up}. We claim that the existence of this
kind of weak solutions, even more than the integrability characteristics of (\ref{FOCHInt1}),
is what makes our equations (\ref{se4}) ---and the particular case (\ref{ch410})--- of interest. {\color{black} 
Pseudo-peakons seem to be fairly new objects in the nonlinear landscape; they have been observed before only in the
``fifth order Camassa-Holm equation" (\ref{foch0}) studied in \cite{LQ,ZCJQ}. It is quite intriguing to find
them at the level of Equation (\ref{se4})}\footnote{ We remark that the Camassa-Holm equation does not have pseudo-peakon weak solutions, as it follows from Lenells' study of travelling wave solutions to CH, see \cite[Theorem 1]{L}. Thus, their existence implies, once again, that CH and (\ref{FOCHInt1}) are different equations.}. In Section 4 we discuss local well-posedness of (\ref{FOCHInt1}) for
initial conditions $u_0 \in H^s(\R)$, $s > 7/2$, which is to be contrasted with the corresponding result for
standard CH equation: for instance, in \cite[Theorem 3.1]{Guill} local well-posedness of the Camassa-Holm equation
is proven for initial data in $H^s(\R)$, $s > 3/2$. Thus, the ``good" spaces of initial data for
CH and (\ref{FOCHInt1}) are different. In this section we also prove a theorem on global
well-posedness of (\ref{FOCHInt1}) in $H^4(\R)$, and we present conditions causing local solutions to blow up in
a finite time\footnote{We note, see Subsection 4.2, that the blow-up mechanism of (\ref{FOCHInt1}) is not the same as in the CH case: for the CH equation\cite{CE,CE1}, blow-up phenomenon means that the first-order derivative tends to negative infinity.
However, for our equation blow-up means that the third-order derivative tends to infinity.}.
Finally, in Section 5 we collect several general remarks. We introduce a hierarchy of higher order Camassa-Holm type equations to which (\ref{FOCHInt1}) belongs, we observe that (in the periodic case, $x \in S^1$) these equations are indeed constructed via inertia operators, that our equations can be written in terms of the geometry of the loop group $Diff(S^1)$, and finally we go back to their relation with classical theory of surfaces.

\section{Equations of Camassa--Holm type}

Let us begin by recalling the following observation about the
important Camassa--Holm equation introduced in \cite{reyes:CH}.

\begin{theorem}
The compatibility condition of the linear problem
$$
d \psi = (X dx + T dt)\psi \; ,
$$
where %in which
$\psi = (\psi_{1}, \psi_{2})^{t}$,
\begin{equation}  \label{reyes:l40}
X = \frac{1}{2} \left [
\begin {array}{cc}
  0 &  \lambda + 2\,\tilde{m}  \\
  \lambda^{-1} & 0
\end {array}
\right ] \; ,
\end{equation}
and
\begin{equation}
 T = \frac{1}{2} \left [
\begin {array}{cc}
\displaystyle
- {u_{x}} & - 2 u  \, \tilde{m} + \lambda u  - \lambda^{2} \\
- 1 - u \lambda^{-1} & u_{x}
\end {array}
\right ]  ,                       \label{reyes:l4}
\end{equation}
is the Camassa-Holm $(CH)$ equation
\begin{equation}
\tilde{m}_{t} = - \tilde{m}_{x}\,u - 2 \,\tilde{m}\,u_{x}\; , \quad \quad \tilde{m} = u_{xx}-u \; .\label{reyes:ch}
\end{equation}
\end{theorem}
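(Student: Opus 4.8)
The plan is to invoke the standard fact that the Pfaffian system $d\psi = (X\,dx + T\,dt)\psi$ is compatible exactly when it is Frobenius integrable, i.e. when $d(d\psi)=0$ holds identically. Writing the system componentwise as $\psi_x = X\psi$ and $\psi_t = T\psi$ and imposing equality of mixed partials $\psi_{xt}=\psi_{tx}$, one substitutes $\psi_t = T\psi$, $\psi_x = X\psi$ to obtain $(X_t - T_x + XT - TX)\psi = 0$. Since this must hold for all $\psi$, the compatibility condition is the zero-curvature equation
\[
X_t - T_x + [X,T] = 0 \; .
\]
Thus the entire theorem reduces to verifying that, for the specific $X$ and $T$ in $(\ref{reyes:l40})$--$(\ref{reyes:l4})$, this single $2\times 2$ matrix identity is equivalent to the Camassa--Holm equation $(\ref{reyes:ch})$.

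The second step is the explicit computation of the four entries of $X_t - T_x + [X,T]$. Since both $X$ and $T$ are traceless, I would compute $XT$ and $TX$ directly and read off the commutator, keeping in mind the structural observation that $X_t$ contributes only to the $(1,2)$ entry (through $\tilde{m}_t$), whereas $T_x$ and $[X,T]$ carry the explicit powers of the spectral parameter $\lambda$. My expectation is that three of the entries impose no dynamics: by tracelessness the $(2,2)$ entry is minus the $(1,1)$ entry, and the $(1,1)$ entry should collapse to $u_{xx} - u - \tilde{m} = 0$, which is nothing but the defining relation $\tilde{m}=u_{xx}-u$; the $(2,1)$ entry should reduce to a cancellation of the two $\lambda^{-1}u_x$ contributions. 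So these entries are automatically satisfied.

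The heart of the argument, and the step I would treat most carefully, is the $(1,2)$ entry. Here one must track every power of $\lambda$ --- the $\lambda^2$, $\lambda$, constant and $\lambda^{-1}$ contributions --- and check that all the explicitly $\lambda$-dependent pieces coming from $T_x$ and $[X,T]$ cancel. The point is that this entry must vanish \emph{identically in} $\lambda$, since $\lambda$ is a free parameter; it is precisely this simultaneous cancellation across all powers of $\lambda$ that encodes a genuine parameter-dependent zero-curvature structure (and hence, in the language of the Introduction, geometric integrability) rather than a spurious one. Once the $\lambda$-terms cancel, what should remain is
\[
\tilde{m}_t + u\,\tilde{m}_x + 2\,\tilde{m}\,u_x = 0 \; ,
\]
which is exactly $(\ref{reyes:ch})$.

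The main obstacle is purely bookkeeping of signs and powers of $\lambda$; conceptually there is no difficulty. I would therefore organize the verification by collecting the coefficient of each power of $\lambda$ separately, confirming term by term that the non-dynamical powers cancel and that the $\lambda$-free remainder coincides with CH. A useful sanity check along the way is that the $\lambda^{-1}$ terms in the $(1,2)$ entry must cancel on their own, which ties the coefficients in $T$ to those in $X$ and explains why the particular form of $(\ref{reyes:l40})$--$(\ref{reyes:l4})$ --- and not an arbitrary perturbation of it --- produces precisely the Camassa--Holm equation.
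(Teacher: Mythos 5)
Your proposal is correct and takes essentially the approach the paper relies on: the paper gives no proof of this theorem (it defers to the cited works of Schiff and Reyes), and for its analogous fifth-order statement it invokes exactly the ``straightforward computation'' you describe --- reducing compatibility to the zero-curvature equation $X_t - T_x + [X,T] = 0$ and verifying it entry by entry, with the $(1,1)$ entry collapsing to the defining relation $\tilde{m} = u_{xx} - u$, the $(2,1)$ and $(2,2)$ entries vacuous, and the $(1,2)$ entry yielding $\tilde{m}_t + u\,\tilde{m}_x + 2\,\tilde{m}\,u_x = 0$ after the $\lambda u_x$ contributions from $-T_x$ and $[X,T]$ cancel. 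The only trifling inaccuracy is your final sanity check: the $(1,2)$ entry contains no $\lambda^{-1}$ terms at all (those cancellations occur in the $(1,1)$ and $(2,1)$ entries), so that check is vacuous rather than informative.
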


This theorem appears in \cite{s-dual} and \cite{reyes:R4}. As indicated in Section 1, we are using $\tilde{m}$
instead of $m$ here in order to keep the signs used in \cite{reyes:R4} when discussing CH.

It is well-known how to obtain quadratic
pseudo-potentials and conservation laws from associated $sl(2,\mathbb{R})$-valued linear
problems, see \cite{reyes:ChT} and Refs. \cite{er_cl,reyes:R4} for full details.
We obtain:

\begin{theorem} \label{reyes:quadratic}
The CH equation $(\ref{reyes:ch})$ admits a quadratic
pseudo-poten\-tial $\gamma$ determined by the compatible equations
\begin{equation}
\tilde{m} = \gamma_{x} + \frac{1}{2\lambda} \, \gamma^{2} - \frac{1}{2}\lambda\; , \qquad
\gamma_{t} = \frac{\gamma^{2}}{2}\left[1+\frac{1}{\lambda}u\right] -
u_{x}\gamma -u\,\tilde{m} + \left[\frac{1}{2}u\lambda  -
\frac{1}{2}\lambda^{2}\right]\, , \label{reyes:tpart3}
\end{equation}
where %in which
 $\lambda \neq 0$ is a parameter.  Moreover, Equation
$(\ref{reyes:ch})$ possesses the parameter-dependent conservation law
\begin{equation}
\gamma_{t}  =  \lambda \left(u_{x} - \gamma - \frac{1}{\lambda}
u\gamma\right)_{x} . \label{reyes:cl3}
\end{equation}
\end{theorem}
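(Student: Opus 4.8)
The plan is to extract the quadratic pseudo-potential directly from the $sl(2,\mathbb{R})$-valued linear problem of the previous theorem by a Riccati-type substitution, following the general procedure of \cite{reyes:ChT}. Writing $\psi = (\psi_1,\psi_2)^t$ for a solution of $d\psi = (X\,dx + T\,dt)\psi$ with $X,T$ as in (\ref{reyes:l40})--(\ref{reyes:l4}), I would set
\[
\gamma = \frac{\psi_1}{\psi_2}\, .
\]
This quotient is insensitive to the overall scale of $\psi$, so it is the natural scalar to extract from the $2\times 2$ system, and its compatible $x$- and $t$-evolutions will be the desired pseudo-potential equations.

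First I would compute the spatial equation. Reading off the entries of $X$ gives $\psi_{1,x} = \tfrac{1}{2}(\lambda + 2\tilde{m})\psi_2$ and $\psi_{2,x} = \tfrac{1}{2\lambda}\psi_1$, so that
\[
\gamma_x = \frac{\psi_{1,x}}{\psi_2} - \gamma\,\frac{\psi_{2,x}}{\psi_2} = \frac{\lambda}{2} + \tilde{m} - \frac{1}{2\lambda}\gamma^2\, ,
\]
and solving for $\tilde{m}$ yields the first equation of (\ref{reyes:tpart3}). Next I would repeat the same Riccati manipulation with the entries of $T$ from (\ref{reyes:l4}), using $\psi_{1,t} = \tfrac{1}{2}(-u_x\psi_1 + (-2u\tilde{m}+\lambda u - \lambda^2)\psi_2)$ and $\psi_{2,t} = \tfrac{1}{2}((-1-u\lambda^{-1})\psi_1 + u_x\psi_2)$, to obtain
\[
\gamma_t = \frac{\gamma^2}{2}\Bigl(1 + \frac{u}{\lambda}\Bigr) - u_x\gamma - u\tilde{m} + \frac{u\lambda}{2} - \frac{\lambda^2}{2}\, ,
\]
which is the second equation of (\ref{reyes:tpart3}). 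That these two evolutions for $\gamma$ are compatible, i.e. $\gamma_{xt} = \gamma_{tx}$, is precisely the integrability of the linear problem, namely the zero-curvature condition $X_t - T_x + [X,T]=0$; by the preceding theorem this holds exactly on solutions of (\ref{reyes:ch}). Hence $\gamma$ is a genuine pseudo-potential for CH.

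For the conservation law (\ref{reyes:cl3}) the plan is a short direct substitution. I would expand the right-hand side as $\lambda(u_x - \gamma - \lambda^{-1}u\gamma)_x = \lambda u_{xx} - \lambda\gamma_x - u_x\gamma - u\gamma_x$, then eliminate $u_{xx} = \tilde{m}+u$ and replace each occurrence of $\gamma_x$ by $\tfrac{\lambda}{2}+\tilde{m}-\tfrac{1}{2\lambda}\gamma^2$ using the spatial equation just derived. After the two $\lambda\tilde{m}$ terms cancel, the surviving expression collapses term by term onto the right-hand side of the $\gamma_t$ equation above, which proves (\ref{reyes:cl3}).

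The main obstacle here is not conceptual but one of careful bookkeeping: one must track the factors of $\lambda$ and $\lambda^{-1}$ through the Riccati substitution, and, for the conservation law, recognize which combination of $u_x$, $\gamma$ and $u\gamma$ has $x$-derivative equal to $\gamma_t$. The insight that makes (\ref{reyes:cl3}) transparent is that once $\tilde{m}$ and $\gamma_x$ are eliminated in favor of $\gamma$, $u$ and their $x$-derivatives, the time flow of $\gamma$ is automatically in divergence form, so no integrating factor or guesswork beyond that single substitution is required.
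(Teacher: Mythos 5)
Your proposal is correct and follows essentially the same route the paper relies on: the paper cites the standard procedure for extracting quadratic pseudo-potentials from an $sl(2,\mathbb{R})$-valued linear problem (Chern--Tenenblat and follow-ups), which is exactly your Riccati projectivization $\gamma = \psi_1/\psi_2$ applied to the matrices (\ref{reyes:l40})--(\ref{reyes:l4}), with compatibility guaranteed by the zero-curvature condition on solutions of CH. Your computations of $\gamma_x$, $\gamma_t$, and the divergence-form identity (\ref{reyes:cl3}) all check out.
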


We can interpret Equations (\ref{reyes:tpart3}) and (\ref{reyes:cl3}) as determining a
``Miura-like" transformation and a ``modified Camassa-Holm" (MOCH) model. These observations are developed
in \cite{G-R}.

We use (\ref{reyes:tpart3}) and (\ref{reyes:cl3}) to construct
conservation laws for the CH equation. Setting $\gamma =
\sum\limits_{n=1}^{\infty} \gamma_{n} \lambda^{n/2}$ and substituting
this expansion into (\ref{reyes:tpart3}), we find the conserved densities
\begin{gather}
\gamma_{1} =  \sqrt{2}\,\sqrt{\tilde{m}}\; , \qquad \gamma_{2}  =   -
\frac{1}{2}\,\ln (\tilde{m})_{x}\; , \qquad \gamma_{3}  =
\frac{1}{2\sqrt{2}\,\sqrt{\tilde{m}}}\left[ 1 - \frac{\tilde{m}_{x}^{2}}{4\,\tilde{m}^{2}}
+
\ln (\tilde{m})_{xx}\right] , \label{reyes:cd-1} \\
\gamma_{n+1}  =  - \frac{1}{\gamma_{1}} \, \gamma_{n,x} -
\frac{1}{2\gamma_{1}}  \sum_{j=2}^{n} \gamma_{j} \, \gamma_{n+2-j} \; , \qquad n \geq 3\; , \label{reyes:cd-4}
\end{gather}
while the expansion $\gamma = \lambda + \sum\limits_{n=0}^{\infty}
\gamma_{n} \lambda^{-n}$ implies
\begin{equation}
\gamma_{0,x} + \gamma_{0} = \tilde{m}\; , \qquad \gamma_{n,x} + \gamma_{n}
= - (1/2) \sum_{j=0}^{n-1} \gamma_{j} \, \gamma_{n-1-j}\; , \quad
                                n \geq 1\; .  \label{reyes:cd2}
\end{equation}
It is shown in \cite{reyes:R4} that the local conserved densities
$\gamma_{n}$ determined by (\ref{reyes:cd-1}) and
(\ref{reyes:cd-4}) correspond to the ones found by Fisher and
Schiff in \cite{reyes:s-asso} by using an ``associated
Camassa--Holm equation'', while (\ref{reyes:cd2}) generates %yields
the local conserved densities $u$, $u_{x}^{2} + u ^{2}$ and $u u_{x}^{2} +
u ^{3}$, and a sequence of nonlocal conservation laws. {\color{black} We note that the densities
$u_{x}^{2} + u ^{2}$ and $u u_{x}^{2} + u ^{3}$ determine a pair of compatible Hamiltonian operators
associated to CH, see \cite{reyes:CH} and also \cite{reyes:s-asso}. }

\medskip

Let us now begin the study of our higher order equation of Camassa-Holm type (\ref{ch410}). First of all we
observe that in terms of associated linear problems
(instead of one-forms as in Section 1), if we consider the matrices
%our basic idea is to use
%$m = A_n (u)$ for higher order operators $A_n$ in the matrices (\ref{reyes:l4}), instead of using simply
%$m = (1-\partial_{xx})u\,$: we keep the $\lambda$-pole structure of the matrices $X$ and $T$
%appearing in ({reyes:l40}) and (\ref{reyes:l4}), but we write $m = A_n (u)$ in
%$X$ and we modify $T$ so that the zero curvature equation $X_t -
%T_x + [X,T] = 0$ is equivalent to a scalar partial differential
%equation. Here we work out the case in which $A_n$ is a fourth
%order operator, and we present a general construction in
%the final section of this paper.
%
%\smallskip
%
%We choose $A_4 (u) = -u_{xxxx}+2\,u_{xx}-u$ and we select
\begin{equation} \label{xch40}
X_4 = \left[ \begin {array}{cc} 0&
\frac{1}{2}\,\lambda-u_{xxxx}+2\,u_{xx}-u\\\noalign{\medskip}\frac{1}{2}\,{\lambda}^{-1}&0\end
{array} \right]
\end{equation}
and
\begin{equation} \label{tch40}
T_4 = \frac{1}{2}\,\left[ \begin {array}{cc} -u_{x}+ u_{xxx}&
 \left( -2\,u+2\,u_{xx} \right)  \left( -u_{xxxx}+2\,
 u_{xx}-u \right) + \lambda \left( u-u_{xx}
 \right) - {\lambda}^{2}\\\noalign{\medskip}\displaystyle - 1-
{\frac {u}{\lambda}}+ {\frac {u_{xx}}{\lambda}}& u_{x}-
u_{xxx}\end {array} \right] \; .
\end{equation}
A straightforward computation shows that the equation
$$
X_{4,t} - T_{4,x} + [X_4 , T_4] = 0
$$
is equivalent to Equation (\ref{ch410}).   % or (\ref{FOCHInt1})
%\begin{gather}
%-3\,u_{x}u+5\,u_{x}u_{xx}-u_{t}-u_{xxxxt}-6\,
%u_{{xxx}}u_{xx}+2\,u_{{xxt}}\nonumber \\
%+\,4\,uu_{xxx} -2\,u_{x}
%u_{xxxx}-u_{xxxxx}u+u_{xxxxx}u_{xx}+2\,u_{xxx}u_{xxxx} = \, 0 \; ,
%\label{ch41}
%\end{gather}
%which can be rewritten in a concise form as follows:
%\begin{eqnarray}
%\left\{\begin{array}{l}
%m_{t}+m_xv+2mv_x=0, \\ %=k_1\left[m\partial^{-1}_{x}mb_{-2,x}\right]_x+\frac{1}{2}k_2(2m
%m=v-v_{xx},\\%b_{-2,x}+m_xb_{-2}),\\
%v=u-u_{xx}.%m_y=b_{-2,x}-b_{-2,xxx}.
%\end{array}\right.
%\label{HOCHInt1}
%\end{eqnarray}
Thus, the geometrically integrable Equation (\ref{ch410}) ---we recall that this notion was introduced in footnote 
2--- is integrable in the sense of admitting the parameter-depending Lax pair
$\psi_x= X_4 \psi, \psi_t= T_4 \psi$ and (as we will see momentarily) of possessing an infinite number of
non-trivial local conservation laws; henceforth we call either (\ref{ch410}) or (\ref{FOCHInt1}) the integrable 
fifth-order CH-type equation.

\begin{remark}
We are aware that the foregoing construction and the comments made in footnote 3 imply that
% Equation (\ref{ch410}) 
%is gauge-equivalent to the standard CH equation, 
%via $\tilde{m}\to -(1-D_x^2)m,\  u\to (1-D_x^2)u$, $m=(1-\partial^2)u$, 
%and that 
the only difference between the matrices (\ref{xch40}) and  (\ref{reyes:l40}) is the potential function. 
Thus, {\em from the point of  view of scattering/inverse scattering}, our equation is a ``fifth-order 
manifestation" of CH, as we said in Section 1. This means that 
 the properties of our equation that depend only on the pole structure of matrices (\ref{xch40}) and
 (\ref{reyes:l40}) can be trivially found from the corresponding properties of the CH equation. However, the inverse
scattering transform (IST) method is not the only way to study nonlinear equations.  As anticipated in footnotes
3, 4, and 5, the structures of (local/nonlocal) symmetries and conservation laws
are different for CH and (\ref{ch410}), their {\em weak} solutions are also different (in Section 3 we obtain {\em pseudo-peakons} instead of peakons), and their analytic properties are
different (in Section 4 we show, for instance, that their blow-up mechanisms differ).
\end{remark}

\smallskip

We now present conservation laws and symmetries of Equation (\ref{ch410}) in an explicit form.

\bigskip

\noindent {\large {\bf Conservation laws} }

\noindent After the classical work \cite{WE} (and the geometric reinterpretation of \cite{WE}
appearing in \cite{reyes:ChT,er_cl}), we compute conservation laws
using quadratic pseudo-potentials. {\color{black} We use Theorem \ref{reyes:quadratic} and Equations 
(\ref{reyes:cd-1})-(\ref{reyes:cd2}).
%
%\begin{proposition}
%Let us assume that a given equation $\Xi(x,t,u,\dots) = 0$ is the
%integrability condition of an $sl(2,\mathbb{R})$-valued linear
%problem $\Psi_x = X \Psi$ and $\Psi_t = T \Psi$, in which the
%matrices $X=(X_{ij})$ and $T=(T_{ij})$ depend on $x,t,u$, finite
%numbers of derivatives of $u$, and (possibly) a parameter
%$\lambda$. Then, the following pair of Riccati equations determines
%a quadratic pseudo-potential for $\Xi = 0:$
%\begin{eqnarray*}
%- 2\, \Gamma_x & = & (-X_{12}+X_{21}+2\, X_{11}) - 2\, \Gamma
%(X_{12} +
%X_{21}) + \Gamma^2 (-X_{12}+X_{21}- 2\, X_{11})  \\
%- 2\, \Gamma_t & = & (-T_{12}+T_{21}+2\, T_{11}) - 2\, \Gamma
%(T_{12} + T_{21}) + \Gamma^2 (-T_{12}+T_{21}- 2\, T_{11})  \; .
%\end{eqnarray*}
%Moreover, the equation $\Xi = 0$ admits a conservation law with
%conserved density
%\begin{equation*}
%X_{12} + X_{21} - \Gamma (-X_{12} + X_{21} - 2\, X_{11})
%\end{equation*}
%and flux
%\begin{equation*}
%T_{12} + T_{21} - \Gamma (-T_{12} + T_{21} - 2\, T_{11}) \; .
%\end{equation*}
%\end{proposition}
%
%\smallskip
%
%\smallskip
%
%In the case of the linear problem determined by (\ref{xch40}) and
%(\ref{tch40}), it is convenient to apply a gauge transformation to
%the connection $X_4 dx + T_4 dt$ with gauge matrix
%\begin{equation} \label{gauge}
%R = \left[ \begin{array}{cr} 1 & -1 \\ 1 & 1 \end{array} \right]
%\; .
%\end{equation}
%We perform this transformation and then we use Proposition 1.
Making the substitution  $\tilde{m}\to -(1-D_x^2)m,\  u\to (1-D_x^2)u$, $m=(1-\partial^2)u$, we obtain that }
Equation (\ref{ch410}) admits the quadratic pseudo-potential
\begin{eqnarray}
{\frac {\partial \Gamma}{\partial x}}
 & = & \frac{1}{2}\,\lambda-u_{xxxx}+2\,u_{xx}-u- {\frac {
\Gamma^2}{2\,\lambda}} \label{g1} \\
 {\frac {\partial \Gamma}{\partial t}} & = & -\, \left( \,
u-\,u_{xx} \right)  \left( -u_{xxxx}+2\,u_{xx}-u \right)
+\frac{1}{2}\,\lambda\, \left( u-u_{xx} \right) -\frac{1}{2}\,{
\lambda}^{2} \nonumber \\
 &  & -\, \Gamma \left( u_{x}-u_{xxx}
 \right) +\frac{1}{2}\, \Gamma^{2} \left( 1+{
\frac {u}{\lambda}}-{\frac {u_{xx}}{\lambda}} \right)
\end{eqnarray}
and the parameter dependent conservation law
\begin{equation} \label{g2}
\left( \frac{\Gamma}{\lambda} \right)_t = \left( u_{x}-u_{xxx}-
\Gamma \left( 1+{\frac {u}{\lambda}}-{\frac {u_{xx}}{\lambda}}\right) \right)_x \; .
\end{equation}
Expansion in powers of $\lambda$ as in
(\ref{reyes:cd-1})--(\ref{reyes:cd2}) yields a sequence of non-trivial local conservation laws.
{We write them down in detail using the concise form (\ref{FOCHInt1}) of the CH-type equation (\ref{ch410})
that we introduced in Section 1, this is,
$$
m_t+2 v_x m + v m_x =0 \; ,
$$
where
$v = u - u_{xx}$ and $m = v - v_{xx} = u - 2 u_{xx} + u_{xxxx}$.
Setting $\Gamma = \sum_{n=1}^\infty \gamma_n \lambda^{n/2}$ and replacing into (\ref{g1}) we obtain
\begin{eqnarray}
\gamma_1 & = & \sqrt{2} \sqrt{-m} \label{cd1} \\
\gamma_2 & = & \frac{-\gamma_{1,x}}{\gamma_1} = \frac{1}{2}\ln(|-m|)_x \label{cd2} \\
\gamma_3 & = & \frac{1}{2\sqrt{2}\sqrt{-m}} \left( 1 - \frac{1}{4}\frac{m_x^2}{m^2} - \ln(|-m|)_{xx} \right)
                                                                                                  \label{cd3} \\
\gamma_n & = & \frac{-1}{\gamma_1} \gamma_{n,x} - \frac{1}{\gamma_1} \sum \gamma_k \gamma_{n+2-k}\; , \quad \quad
n \geq 3\; .\label{cd4}
\end{eqnarray}

\noindent Now we set $\Gamma = \lambda + \sum_{n=0}^\infty \tilde{\gamma}_n \lambda^{-n}$ and we
substitute into (\ref{g1}). We obtain
\begin{eqnarray}
\tilde{\gamma}_{0,x} + \tilde{\gamma}_0 & = & - m \label{cd5} \\
\tilde{\gamma}_{n,x}+\tilde{\gamma}_n & = & -\frac{1}{2}\sum_{k=0}^{n-1}\tilde{\gamma}_k\tilde{\gamma}_{n-1-k}\; .
\label{cd6}
\end{eqnarray}
Equation (\ref{cd5}) yields the conserved density $\tilde{\gamma}_0 = - u_{xxx}+u_{xx} + u_x - u$. In order to
find further densities we note that Equation (\ref{g2}) means not only that the functions $\tilde{\gamma}_n$ are
conserved densities, but also that so are the functions $\tilde{\gamma}_n + \tilde{\gamma}_{n,x}$. From
(\ref{cd6}) we obtain
\begin{equation} \label{cd7}
\int ( \tilde{\gamma}_1 + \tilde{\gamma}_{1,x})dx = - \frac{1}{2} \int \tilde{\gamma}_0^2 dx = - \frac{1}{2}
\int (u_{xxx}^2 + 3 u_{xx}^2 + 3 u_x^2 + u^2 ) dx \; ,
\end{equation}
in which we have eliminated total derivatives and we have also eliminated all boundary terms that appear after
using integration by parts. Thus,
\begin{equation} \label{cd8}
H_1 = u_{xxx}^2 + 3 u_{xx}^2 + 3 u_x^2 + u^2
\end{equation}
is a local conserved density for (\ref{ch410}). This density will be important for our analysis of the global
well-posedness of (\ref{ch410}), see Theorems 6, 7, and 8 below. Further conserved densities arising from
(\ref{cd6}) are non-local expressions. For example, taking $n=2$ in (\ref{cd6}) we find
$$
\int ( \tilde{\gamma}_2 + \tilde{\gamma}_{2,x})dx = - \int \tilde{\gamma}_0 \tilde{\gamma}_1 dx\; ,
$$
this is, after integration by parts,
\begin{eqnarray}
\int ( \tilde{\gamma}_2 + \tilde{\gamma}_{2,x})dx & = & - \int (u_{xx} - u_x)\tilde{\gamma}_{1,x}dx +
                                          \int u (\tilde{\gamma}_{1,x} + \tilde{\gamma}_1) dx \nonumber \\
 & = &  \int(u_{xxx}-u_{xx})\tilde{\gamma}_{1}dx - \frac{1}{2}\int u (- u_{xxx}+u_{xx} + u_x - u)^2 dx\;
 \label{j}.
\end{eqnarray}

\medskip

It is immediate that all conserved densities $\gamma_k$, $k$ odd, which are included in (\ref{cd1})--
(\ref{cd4}) are non-trivial. Indeed, it is known that all conserved densities $\gamma_k$, $k$ odd, which appear in
(\ref{reyes:cd-1})--(\ref{reyes:cd-4}) are non-trivial: they have a term that depends only on (in our notation) $\tilde{m}$. Thus, the odd-label densities appearing in (\ref{cd1})--(\ref{cd4}) have a term that depends only
on $-(1-\partial^2)m$, hence they are non-trivial as well.

\begin{remark}
The foregoing computations tell us once again that Equation (\ref{ch410}) has different properties than the
standard Camassa-Holm equation.
In fact, while expansion in powers of $\lambda^{n/2}$ of the density $\Gamma$
of (\ref{g2}) yields conservation laws that correspond to the ones appearing in
(\ref{reyes:cd-1})--(\ref{reyes:cd-4}), we lose one local density if we expand in powers of $\lambda^{-n}\,$.
In the case of the fifth order CH-type equation (\ref{ch410}), we obtain (\ref{j}) instead of a density similar to
the conserved density $H_2 =u (u_{x}^{2} + u ^{2})$ arising in the Camassa-Holm case, see (\ref{reyes:cd2}) and
\cite{reyes:R4}. {\color{black} This is important, because, as discussed after Equation (\ref{reyes:cd2}), $H_2$ 
is one of the CH Hamiltonian densities, see for
example \cite[Equation (2)]{reyes:s-asso}. In other words, we cannot translate the CH bi-hamiltonian
structure to our Equation (\ref{ch410}). Of course, this is in agreement with the Constantin-Kolev
result in \cite{CP} on the classification of bi-hamiltonian equations that we mentioned in Section 1.}
\end{remark}

\begin{remark}
The CH-type equation (\ref{ch410}) admits a Hamiltonian formulation. Indeed, we consider the density $H_1$
given in (\ref{cd8}) and define
$$
\mathcal{H} = \int H_1 dx = \frac{1}{2} \int \left( u_{xxx}^2 + 3 u_{xx}^2 + 3 u_x^2 + u^2 \right) dx \; .
$$
Then, it is straightforward to check that Equation (\ref{ch410}) is equivalent to the Hamilton equation
$$
m_t = -(\partial_x m + m \partial_x) \frac{\delta \mathcal{H}}{\delta m} =
     -(\partial_x\, m + m\, \partial_x)\,(1-\partial_x^2)^{-2} \frac{\delta \mathcal{H}}{\delta u} \; ,
$$
in which, as usual in this paper, $m = (1-\partial_x^2)^2 u = u - 2 u_{xx} + u_{xxxx}$ and $\delta/\delta u$ is the
standard Euler operator.
\end{remark}

\bigskip

\noindent {\large {\bf Symmetries} }

\noindent Now we compute symmetries for (\ref{ch410}). The Lie algebra of point symmetries of
this equation is much richer than the Lie algebra of point symmetries of the CH equation: we obtain the following
result with the help of GeM, see \cite{Ch}, and the MAPLE built-in package PDEtools:

\begin{proposition}
 ~

\begin{itemize}
\item The Lie algebra of point symmetries of Equation $(\ref{ch410})$ is generated by the vector fields
$$
V_1=\frac{\partial}{\partial x}\; , \quad \quad \quad V_2= \frac{\partial}{\partial t}\; , \quad \quad \quad
V_{3F}= F(t) e^x \frac{\partial}{\partial u}\;
$$
and
$$
V_{4G}=G(t) e^{-t} \frac{\partial}{\partial u}\; , \quad \quad \quad
V_5 = t \frac{\partial}{\partial t} - u \frac{\partial}{\partial u}\; ,
$$
in which $F(t)$ and $G(t)$ are arbitrary smooth functions.
\item The Lie algebra structure of point symmetries of Equation $(\ref{ch410})$ is determined by the
commutator table

$$
%\begin{center}
\begin{tabu}{ c||c|c|c|c|c|}
% \hline
    & V_1 & V_2 & V_{3F} & V_{4G} & V 5 \\
    \hline \hline
 V1 & 0 & 0 & V_{3F} & - V_{4G} & 0 \\
 \hline
V2  &  & 0 & V_{3F_t} & V_{4G_t} & V_{2}  \\
\hline
V3  &  &  & 0 & 0 & -V_{3\,(-t F_t+F)}  \\
\hline
V4  &  &  &  & 0 & -V_{4(tG_t +G)}  \\
\hline
V5  &  &  &  &  & 0  \\
 \hline
\end{tabu}
%\end{center}
$$
\end{itemize}
\end{proposition}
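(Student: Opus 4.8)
The plan is to run Lie's classical algorithm for point symmetries and then to read off the commutator table by direct bracket computations. I write a general infinitesimal generator as $V = \xi(x,t,u)\,\partial_x + \tau(x,t,u)\,\partial_t + \phi(x,t,u)\,\partial_u$. Since Equation $(\ref{ch410})$ is of fifth order, I would prolong $V$ to fifth order, $\mathrm{pr}^{(5)}V$, using the standard prolongation formula $\phi^{J} = D_J(\phi - \xi u_x - \tau u_t) + \xi\,u_{Jx} + \tau\,u_{Jt}$, where $J$ ranges over multi-indices in $(x,t)$, and impose the linearized symmetry condition $\mathrm{pr}^{(5)}V(\Delta)=0$ on the solution manifold $\Delta=0$, with $\Delta$ the difference of the two sides of $(\ref{ch410})$. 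Solving $\Delta=0$ for the top mixed derivative $u_{xxxxt}$ and substituting everywhere, the symmetry condition becomes a polynomial identity in the remaining independent jet coordinates $u,u_x,\dots,u_{xxxxx}$ and the lower mixed $t$-derivatives. Equating to zero the coefficient of each independent monomial yields the overdetermined linear determining system for $\xi,\tau,\phi$.

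The first, and routine, part of the analysis is to show from the high-order coefficients that $\xi$ and $\tau$ are independent of $u$, that $\tau=\tau(t)$, and that $\phi$ is affine in $u$. The lower-order coefficients then force $\xi$ to be constant and $\tau=a+bt$ affine in $t$, while matching the $u\,\partial_u$-content pins the linear-in-$u$ part of $\phi$ to $-b\,u$; this recovers $V_1=\partial_x$, $V_2=\partial_t$, and the scaling $V_5=t\partial_t-u\partial_u$. The interesting step concerns the $u$-independent part $\beta(x,t)$ of $\phi$: the generator $\beta\,\partial_u$ must satisfy the linearization of $(\ref{ch410})$ about an arbitrary solution. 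Because every term on the right-hand side of $(\ref{ch410})$ is a product of two $x$-derivatives of $u$, the coefficient of the highest derivative $u_{xxxxx}$ in this linearization is exactly $\beta_{xx}-\beta$, so the determining system contains the single ordinary differential equation $\beta_{xx}=\beta$, equivalently $(1-\partial_x^2)\beta=0$; I would then verify that this one relation makes all remaining coefficients vanish identically. Its general solution is $\beta = F(t)\,e^{x}+G(t)\,e^{-x}$ with $F,G$ arbitrary, and since $e^{\pm x}$ lie in the kernel of $(1-\partial_x^2)$, hence of $(1-\partial_x^2)^2$, the left-hand side contribution $\partial_t(1-\partial_x^2)^2\beta$ vanishes automatically and imposes no constraint on $F$ or $G$. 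This produces the two functional families $V_{3F}$ and $V_{4G}$ and exhausts the solution space.

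Finally I would establish the commutator table by computing each bracket directly from $[V,W]^i = V(W^i)-W(V^i)$ applied to the three components $i\in\{\xi,\tau,\phi\}$. The only entries requiring attention involve the functional generators: brackets with $V_2=\partial_t$ differentiate the arbitrary function (so $[V_2,V_{3F}]=V_{3F_t}$, and likewise for $V_{4G}$), brackets with $V_1=\partial_x$ reproduce the generator up to the sign in $e^{\pm x}$, and brackets with $V_5$ combine the explicit factor $t$ with the derivative of the arbitrary function; all of these are one-line substitutions. I expect the main obstacle to be computational rather than conceptual: the fifth prolongation of $V$ generates a very large determining system, which is precisely why the computation is delegated to GeM and \texttt{PDEtools}, and the care lies in the bookkeeping needed to split the condition into independent coefficients and to confirm that $\beta_{xx}=\beta$ really annihilates every remaining coefficient. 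The conceptual crux, by contrast, is the short observation that the nonlinearity contributes nothing new to the linear superposition symmetries, because it is assembled from $(1-\partial_x^2)$-type operators whose kernel is spanned by $e^{\pm x}$.
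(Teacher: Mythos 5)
Your proposal is correct, and it is essentially the paper's own approach made explicit: the paper gives no written proof at all, delegating Lie's classical algorithm to GeM and PDEtools, so your by-hand outline of the determining-equation analysis is exactly the computation hidden behind that citation. Your key step checks out: for a generator $\beta(x,t)\,\partial_u$ the coefficient of $u_{xxxxx}$ in the linearization of (\ref{ch410}) is $\beta_{xx}-\beta$ (from the terms $-u\,u_{xxxxx}+u_{xx}u_{xxxxx}$), and once $\beta_{xx}=\beta$, i.e. $(1-\partial_x^2)\beta=0$, the whole linearized equation $C_t+C_x v+2Cv_x+Bm_x+2B_xm=0$, with $B=(1-\partial_x^2)\beta$ and $C=(1-\partial_x^2)B$, vanishes identically, so the single ODE does annihilate every remaining coefficient, as you claim. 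Two consequences of your derivation deserve to be stated rather than glossed over, because they silently correct the proposition as printed. First, your general solution $\beta=F(t)e^{x}+G(t)e^{-x}$ gives $V_{4G}=G(t)e^{-x}\,\partial_u$, not $G(t)e^{-t}\,\partial_u$; an $x$-independent $\beta(t)\,\partial_u$ is never a symmetry (its $u_{xxxxx}$-coefficient is $-\beta$, forcing $\beta\equiv 0$), and the table itself requires the $e^{-x}$ dependence through $[V_1,V_{4G}]=-V_{4G}$, so the printed $e^{-t}$ is a typo that your argument fixes. Second, the one-line bracket you describe gives $[V_{3F},V_5]=-(tF_t+F)e^{x}\partial_u=-V_{3(tF_t+F)}$, matching in form the printed $[V_{4G},V_5]=-V_{4(tG_t+G)}$; the printed entry $-V_{3(-tF_t+F)}$ carries a sign typo. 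The only soft spot in your write-up is that the ``routine part'' (that $\xi$ is constant, $\tau$ is affine in $t$, and the linear-in-$u$ part of $\phi$ is $-\tau_t u$) is asserted rather than derived; since the completeness claim of the proposition rests precisely on those determining equations, you should either exhibit the few coefficients that force them or, as the paper effectively does, certify them by the symbolic computation.
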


\bigskip

The existence of symmetries $V_1$ and $V_2$  prompts us to look for solutions of the form $u(x,t) = f(x + ct)$.
We easily find $u(x,t) = A\, {\mathrm e}^{c t +x} + B\, {\mathrm e}^{-c t -x} -c$, which is not a travelling wave.
We consider ``weak forms" of travelling waves in the next section.

\medskip

\begin{remark}
The nonlocal symmetries of the CH equation studied in \cite{reyes:R4,G-R,HHR} do not transform into nonlocal 
symmetries of the fifth-order CH-type equation (\ref{ch410}): 
it is proven in \cite{reyes:R4} that the Camassa-Holm equation admits the nonlocal symmetry
$V = \gamma \exp(\delta/\lambda)\, \partial/\partial u$, in which $\gamma$ satisfies (\ref{reyes:tpart3}) and
$\delta$ is a potential of the conservation law (\ref{reyes:cl3}) but, on the other hand,
%Also, it is observed in \cite{reyes:keti}
%that a nonlocal symmetry of the same form as $V$ allows one to {\em classify} all integrable equations belonging
%to a one-parameter family of equations admitting quadratic pseudopotentials and conservation laws (see
%\cite[Theorem 6.6]{reyes:keti}).
 Equation (\ref{ch410}) does not admit a symmetry similar to $V$. Indeed,
computations carried out with the help of the MAPLE packages DifferentialGeometry and JetCalculus tell us that
it is not possible to choose $L \in \R$ so that $V = \gamma \exp(L\,\delta)\, \partial/\partial u$ --- in which
$\gamma$ solves (\ref{g1}), (\ref{g2}) and $\delta$ is a potential of the conservation law (\ref{g2}) ---
be a symmetry of (\ref{ch410}). More generally we can prove:

\begin{adjustwidth}{1cm}{}

\begin{proposition}
The fifth order CH-type equation $(\ref{ch410})$ does not admit a non-trivial nonlocal symmetry of the form
$V = f(\gamma ,\delta)\, \partial/\partial u$, in which $\gamma$ is a solution to $(\ref{g1})$ and $(\ref{g2})$,
and $\delta$ is a potential of the conservation law $(\ref{g2})$.
\end{proposition}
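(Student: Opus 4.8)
The plan is to realise $V=f(\gamma,\delta)\,\partial/\partial u$ as a vertical vector field on the covering of (\ref{ch410}) determined by the pseudo-potential and to force it to satisfy the nonlocal symmetry determining equation. Writing $v=u-u_{xx}$ and $m=u-2u_{xx}+u_{xxxx}$, Equation (\ref{ch410}) reads $F:=m_t+v\,m_x+2\,m\,v_x=0$, and the covering is the system (\ref{g1}), (\ref{g2}) together with the potential $\delta$. A useful first simplification is that (\ref{g1}) collapses to $\gamma_x=\tfrac12\lambda-m-\tfrac{\gamma^2}{2\lambda}$, while its companion $t$-equation and (\ref{g2}) give $\gamma_t$, $\delta_x=\gamma/\lambda$ and $\delta_t=v_x-\gamma-\gamma v/\lambda$. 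Letting $\tilde{D}_x,\tilde{D}_t$ denote the total derivatives extended to the covering by these four relations, the symmetry condition for $V$ becomes
\[
\tilde{D}_t M+m_x\,\Phi+v\,\tilde{D}_x M+2\,v_x\,M+2\,m\,\tilde{D}_x\Phi=0 \qquad \text{on solutions of } F=0,
\]
where $M=(1-\tilde{D}_x^2)^2 f$ and $\Phi=(1-\tilde{D}_x^2)f$ are the induced variations of $m$ and $v$.

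First I would compute $M$ and $\Phi$ explicitly as differential functions of the jet coordinates $m,m_x,m_{xx},m_{xxx}$ and of $\gamma,\delta,\lambda$, rewriting every $\tilde{D}_x$ that hits $\gamma$ or $\delta$ by means of the covering relations. Because $\gamma_x$ contains $m$ linearly, each application of $\tilde{D}_x$ raises the jet order by one, and a short bookkeeping shows that the top-order term of $M$ is $-f_\gamma\,m_{xxx}$. I would then substitute $M,\Phi$ into the symmetry condition, eliminate $m_t$ and all mixed $t$-jets through $m_t=-v m_x-2m v_x$, and so convert the condition into a single polynomial identity in the independent coordinates $u,\dots,u_{xxxx}$, $m,\dots,m_{xxxx}$, $\gamma$, $\delta$ and $\lambda$. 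The highest-order coefficient (order eight, the coefficient of $m_{xxxx}$) cancels identically, as it must for an evolution equation; the content of the proposition lives in the remaining lower-order coefficients.

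The heart of the argument, and the step I expect to be the main obstacle, is to show that this overdetermined system of coefficient equations for $f(\gamma,\delta)$ admits only the trivial solution $f_\gamma=0$, $f_\delta=\text{const}$, i.e.\ no $f$ depending genuinely on the nonlocal variables. Here the contrast with Camassa--Holm is decisive: for CH the analogous determining equations are compatible and are solved by $f=\gamma\exp(\delta/\lambda)$, whereas the fourth-order term $m=u-2u_{xx}+u_{xxxx}$ entering $\gamma_x$ and the fifth-order nonlinearity of (\ref{ch410}) produce cross terms — typically at the level of the coefficients of $m_{xx}$ and $m_x$, which mix $f_\gamma,f_{\gamma\gamma},f_{\gamma\delta}$ against the lower-order pieces of $M$ — that cannot be balanced unless $f_\gamma\equiv 0$. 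Once $f_\gamma=0$ is forced one has $f=f(\delta)$, and reinserting this into the surviving equations forces $f_\delta$ to be constant, so $f$ reduces to a constant and $V$ is not a genuine nonlocal symmetry. I would organise the matching from the top jet downward, so that each step either determines or annihilates one $\gamma$- or $\delta$-derivative of $f$, keeping careful track of the explicit factors of $\lambda$ and $\lambda^{-1}$ to rule out any $\lambda$-homogeneous ansatz of CH-exponential type. An equivalent route is to prolong $V$ with unknown components $G=\hat{V}(\gamma)$, $H=\hat{V}(\delta)$ and impose $[\hat{V},\tilde{D}_x]=[\hat{V},\tilde{D}_t]=0$, giving $\tilde{D}_x H=G/\lambda$ and $\tilde{D}_x G=-M-(\gamma/\lambda)G$ with their $t$-analogues; the compatibility $\tilde{D}_x\tilde{D}_t=\tilde{D}_t\tilde{D}_x$ on $\gamma,\delta$ then reproduces the same incompatibility in a more structured form. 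Either way the essential difficulty is purely the inconsistency of the resulting overdetermined linear system, which is exactly where the higher-order nature of (\ref{ch410}) makes itself felt.
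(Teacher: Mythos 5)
Your setup is sound and is essentially the same as the paper's: you impose the nonlocal symmetry determining equation for $V=f(\gamma,\delta)\,\partial/\partial u$ on the covering defined by (\ref{g1}), (\ref{g2}) and the potential $\delta$ (the paper phrases this as $L_{pr(V)}\Delta=0$ reduced by the covering relations; your condition $\tilde{D}_t M+m_x\Phi+v\,\tilde{D}_x M+2v_x M+2m\,\tilde{D}_x\Phi=0$ with $M=(1-\tilde{D}_x^2)^2f$, $\Phi=(1-\tilde{D}_x^2)f$ is the same object), and your bookkeeping --- top term $-f_\gamma m_{xxx}$ of $M$, cancellation of the top-order coefficient on shell --- is correct. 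The problem is that the proof stops exactly where the proposition begins. The entire content of the statement is that the resulting overdetermined linear system for $f$ admits only the trivial solution, and this you never establish: the decisive sentence is that the cross terms ``cannot be balanced unless $f_\gamma\equiv 0$'', hedged by ``typically'' and ``I expect''. No determining equation is actually written down and no inconsistency is exhibited; an asserted incompatibility of an overdetermined system is not a proof, and the paper has to resort to an explicit (MAPLE-assisted) elimination to obtain it.

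The actual computation also shows that the obstruction is structured differently from what you predict, which is why the guess cannot stand in for the calculation. In the paper, the first condition extracted (from the coefficient of $u_{xxxxxxx}$, then differentiating in $u$) is $f_{\gamma\gamma}=0$, not $f_\gamma=0$; thus $f=f_1(\delta)\gamma+f_2(\delta)$ survives, and the next conditions are $f_1=C_1\exp(3\delta/\lambda)$ and, after $C_1=0$, $f_2=C_3+C_4\exp(2\delta/\lambda)$ (in the paper's normalization $\delta_x=\gamma$). In other words, CH-type exponential candidates persist through several stages of the elimination and are killed only by further differentiations with respect to the seventh- and sixth-order jets, giving $C_1=C_3=C_4=0$; they are not ruled out by tracking powers of $\lambda$, as you propose. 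Finally, your endgame is loose even on its own terms: from ``$f_\delta$ is constant'' you conclude that ``$f$ reduces to a constant'', which does not follow (it gives $f$ affine in $\delta$), and the conclusion one needs is stronger anyway --- the paper shows $f$ vanishes identically, and indeed a nonzero constant $f=c$ is not a symmetry, since it contributes $c\,(m_x+2v_x)\neq 0$ to the determining equation.
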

\begin{proof}
The method of proof is standard, and so we only sketch its main points. We use the MAPLE packages
DifferentialGeometry and JetCalculus in order to carry out our computations.

\medskip

Let $\Delta$ be the left hand side of Equation (\ref{ch410}). We consider a vector $V$ as in the enunciate of the
proposition and we compute the Lie derivative
$$
L_{pr (V)}\Delta\; ,
$$
in which $pr(V)$ is the fifth prolongation of $V$. This derivative depends on higher derivatives of $\gamma$ and
$\delta$. We get rid of these derivatives using the four compatible equations (\ref{g1}), (\ref{g2}),
$\delta_x = \gamma$, $\delta_t = \lambda (u_x - u_{xxx}) - \lambda \gamma -
(u - u_{xx})\gamma$, and their differential consequences. We obtain a long expression which depends only of
$x$-derivatives of $u$; in fact, the highest $x$-derivative that appears in this expression is $u_{xxxxxxx}$.
We will call this expression (and the ones obtained from it as explained below) $E$, simply. Differentiating $E$
with respect to $u_{xxxxxxx}$ and then differentiating the resulting
expression with respect to $u$, we obtain the necessary condition
$$ f_{\gamma \gamma} = 0 $$
for $V$ to be a symmetry, this is, $f(\gamma , \delta) = f_1(\delta) \gamma + f_2(\delta)$. Replacing into $E$,
differentiating with respect to $u_{xxxxxxx}$, and then differentiating the resulting expression with respect to
$u_x$, yield the new necessary condition $f_1(\delta) = C1\,\exp(3\delta/\lambda)$. Replacing
this constraint into $E$ and differentiating with respect to $u_{xxxxxxx}$ once again, we find that our next
necessary condition is $C1=0$. We replace this new constraint into $E$ and differentiate the resulting
expression with respect to $u_{xxxxxx}$ and to $u_x$. We obtain the new necessary condition $f_2(\delta) = C3 +
C4 \exp(2 \delta/\lambda)$. Replacing one last time into $E$ and differentiating with respect to $u_{xxxxxx}$,
we obtain the conditions $C3 = C4 =0$, so that if a vector field $V=f(\gamma ,\delta)\, \partial/\partial u$ were
a symmetry of (\ref{ch410}), then the function $f$ had to vanish identically.
\end{proof}
\end{adjustwidth}
\end{remark}
}

\section{Pseudo-peakons}

%In this section we study solutions to our fifth order CH-type equation (\ref{FOCHInt1}). As pointed out after
%Proposition 2, we can find explicit solutions rather easily. Besides the elementary solution already reported
%therein, we can check, for instance, that
%$$
%u(x,t) = \left( c_1\,{e^{-2\,x}}+ c_2\,{e^{-2\,x}} x + c_3 + c_4 \,x \right){e^{x-d(t) }} \; ,
%$$
%in which $c_1,\dots,c_4$ are constant numbers and $d(t)$ is an arbitrary function of $t$, solves (\ref{FOCHInt1}).
%This function $u(x,t)$ is not a solution to the standard Camassa-Holm equation. 

The main goal of this section is
to show that the integrable Equation (\ref{FOCHInt1}) admits pseudo-peakon and
multi-pseudo-peakon solutions, as anticipated in Section 1.
%to the CH-type equation
%(\ref{FOCHInt1}), which is the major goal in this section.

%\newpage
%\ColorFoil{\hfill{\small \textcolor{magenta}{-- Integrable
 % \ Hierarchy  }\\$ $\\}\normalsize{Pseudo-Peakons}}

Casting the regular travelling wave setting $\xi=x-ct$ in
the CH-type equation (\ref{FOCHInt1}), through a lengthy computation we obtain
the following single {\em pseudo-peakon} solution:
\begin{equation} \label{pp1}
u=\frac{c}{2} e^{-|\xi|}(1+|\xi|)\; , \quad \xi=x-ct\; ,
\end{equation}
which looks like a peakon since there are absolute-value functions
involved. But, this function in spirit  has %only
continuous derivatives up to the second order,
\begin{equation} \label{pp2}
u'=-\frac{c}{2} e^{-|\xi|} \xi\; , \quad \quad  u''=\frac{c}{2} e^{-|\xi|} (|\xi|-1)\; ,
\end{equation}
{\color{black} which show us that the solution $u$ is differentiable, % to the first
%order derivative,
with continuous and bounded second order derivative, but whose
%but no continuous
third order derivative blows up} (see Figures \ref{fig:FOCH} and \ref{fig:FOCH1}). %(see the graph plotted below).

\begin{figure}
\begin{minipage}[t]{0.5\linewidth}
\centering
\includegraphics[height=3cm,width=5cm]{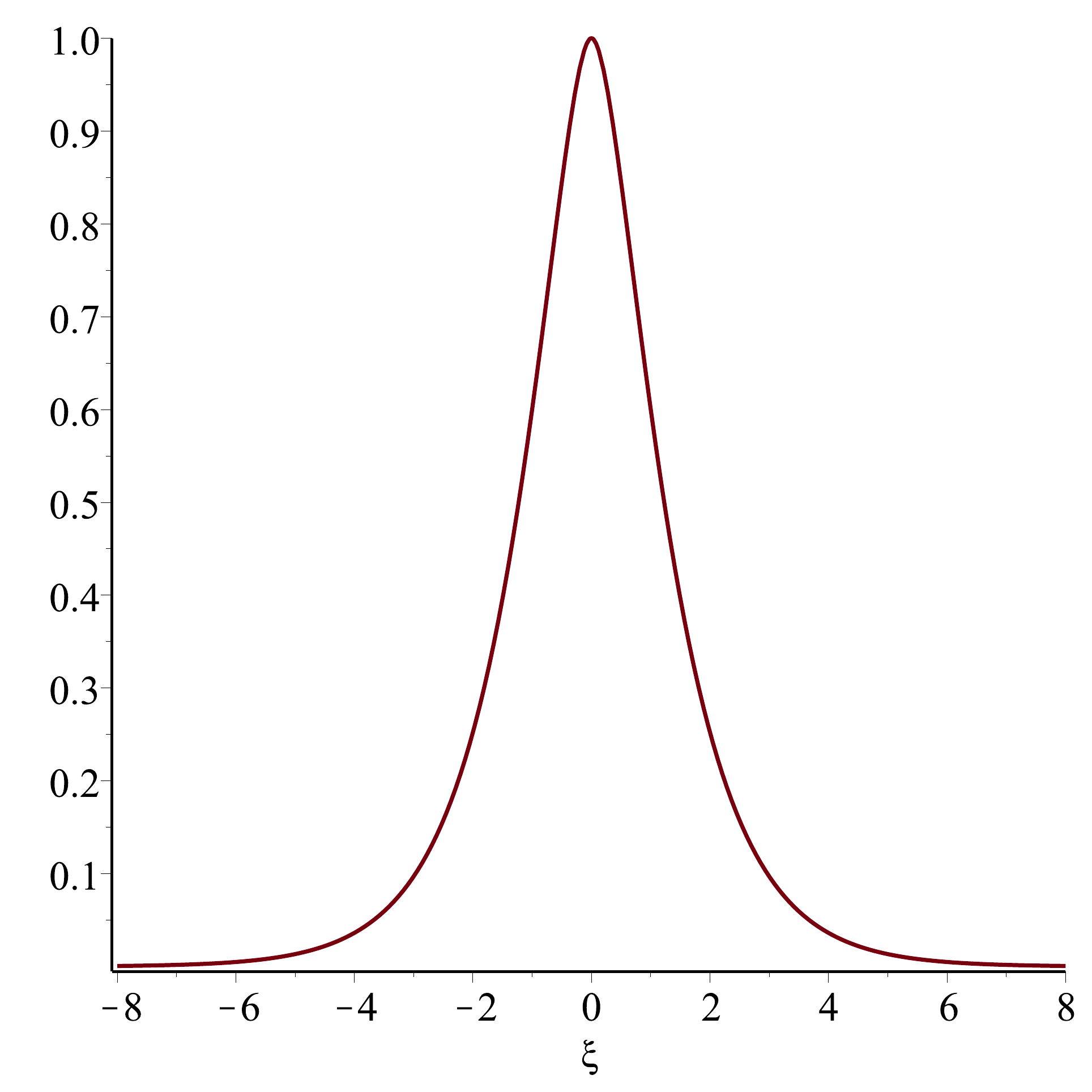}
\caption{ The single pseudo-peakon solution (\ref{pp1})}
\label{fig:FOCH}
\end{minipage}
\hspace{1.9ex}
\begin{minipage}[t]{0.5\linewidth}
\centering
\includegraphics[height=3.60cm,width=5.0cm]{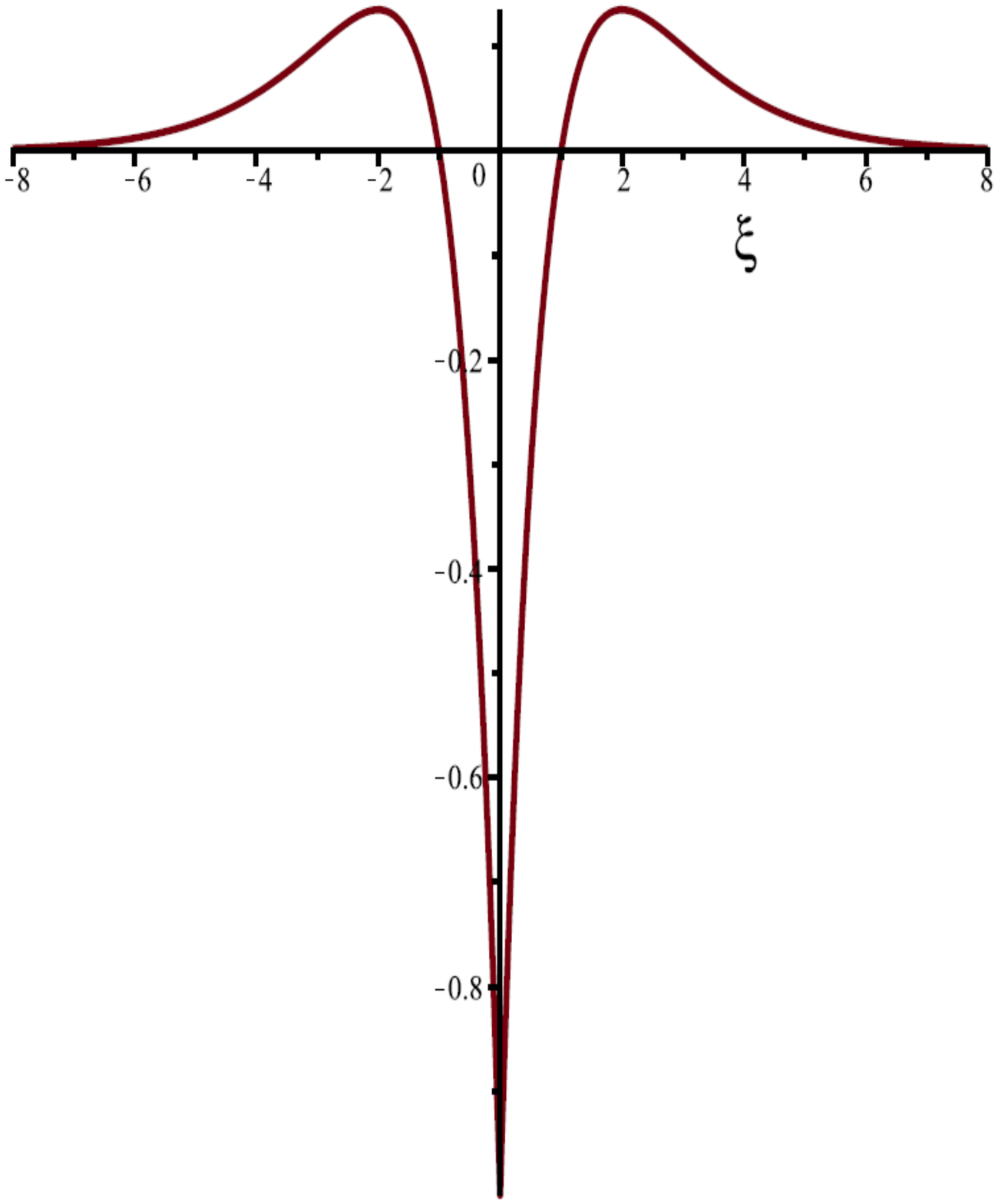}
\caption{ The peaked second derivative (\ref{pp2})}
\label{fig:FOCH1}
\end{minipage}
%\hspace{1.9ex}
%\begin{minipage}[t]{0.3\linewidth}
%\centering
%\includegraphics[width=2.3in]{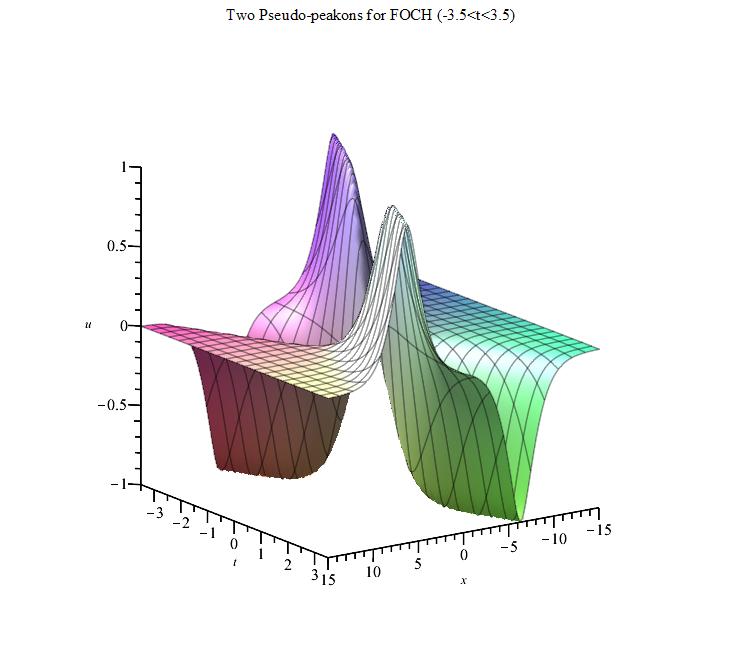}
%\caption{\small{The two pseudo-peakon interaction plotted in 3D
%The two-peakon solution determined by (\ref{42pq2}). Solid line: $u(x,t)$; Dashed line: $v(x,t)$; Black: $t=-0.5$; Blue: $t=-1$.
%}}
%\label{F23}
%\end{minipage}
\end{figure}

%\newpage
%\ColorFoil{\hfill{\small \textcolor{magenta}{-- Integrable
 % \ Hierarchy  }\\$ $\\}\normalsize{$N$-Pseudo-Peakons}}

\medskip

We can also compute multi-pseudo-peakon solutions. They are of the form
\begin{equation} \label{mpp}
 u=\sum_{j=1}^N \frac{p_j(t)}{2} e^{-|x-q_j(t)|}(1+|x-q_j(t)|)\; ,
\end{equation}
 where $p_j(t), \ q_j(t) $ satisfy the following canonical Hamiltonian dynamical system
\beq
\dot{q}_j&=& \frac{\pa H}{\pa p_j}, \label{hs1} \\
\dot{p}_j&=&- \frac{\pa H}{\pa q_j},
\eeq
with Hamiltonian
function:\textcolor{black}{ \beq H&=&
\frac{1}{2}\sum_{i,j=1}^Np_ip_je^{-|q_i-q_j|}\; . \label{hs3} \eeq}

A crucial observation is that (\ref{hs1})--(\ref{hs3}) coincides exactly with the finite-dimensional
peakon dynamical system of the CH equation, see \cite{reyes:CH}. As we will explain momentarily, this fact allows 
us to have a full picture of multi-pseudo-peakon solutions. 
First, let us calculate explicitly $2$-pseudo-peakons. When $N=2$, we have the $2-$pseudo-peakon equations below:
\begin{eqnarray}
\left\{\begin{array}{l}
p_{1,t}= p_1p_2\, {\rm sgn}(q_1-q_2)\, e^{-|q_1-q_2|}\; ,\\
p_{2,t}= p_1p_2\, {\rm sgn}(q_2-q_1)\, e^{-|q_1-q_2|}\; ,\\
q_{1,t}= p_1+p_2\, e^{-|q_1-q_2|}\; ,\\
q_{2,t}= p_2+p_1\, e^{-|q_1-q_2|}\; ,
\end{array}\right.
\label{2-Pseudo-Peakon}
\end{eqnarray}
which can be solved with the following explicit solutions:
\begin{eqnarray}
\left\{\begin{array}{l}
p_1(t)=-p_2(t)=A \coth (At),\\% {\rm sgn}(q_1-q_2) \exp (-|q_1-q_2|),\\
q_1(t)=-q_2(t)=\ln \cosh (At),
\end{array}\right.
\label{2-Pseudo-Peakon-pq}
\end{eqnarray}
where $A$ is an arbitrary constant.
Thus, the 2-pseudo-peakon solution of the fifth order CH-type equation (\ref{FOCHInt1}) is given by
%the following explicit formula:
{\small \begin{eqnarray}
 u(x,t)&=&\frac{p_1(t)}{2} e^{-|x-q_1(t)|}(1+|x-q_1(t)|)+\frac{p_2(t)}{2} e^{-|x-q_2(t)|}(1+|x-q_2(t)|)\nonumber\\
 &=&\frac{A}{2}\coth (At)\left[ e^{-|x-\ln \cosh (At)|}(1+|x-\ln \cosh (At)|)-e^{-|x+\ln \cosh (At)|}(1+|x+
    \ln \cosh (At)|)  \right], \nonumber\\
 & & \label{2-Pseudo-Peakon-u}
 \end{eqnarray}
 }
where $ \cosh (At)=\frac{e^{At}+e^{-At}}{2}$, and $ \coth (At)=\frac{e^{At}+e^{-At}}{e^{At}-e^{-At}}$.
If we fix time $t=\frac{\cosh^{-1}e}{A}$ and we select $A=\frac{2}{\coth(\cosh^{-1} e)}$, the above
2-pseudo-peakon solution reads as the following simplest form
{\small \begin{eqnarray*}
 u(x,t) &=&e^{-|x-1|}(1+|x-1|)-e^{-|x+1|}(1+|x+1|),
 \end{eqnarray*}}
which we may plot in a 2D picture for the two pseudo-peakon interaction (see Figure \ref{F21}).

\begin{figure}
\begin{minipage}[t]{0.5\linewidth}
\centering
\includegraphics[width=2.0in]{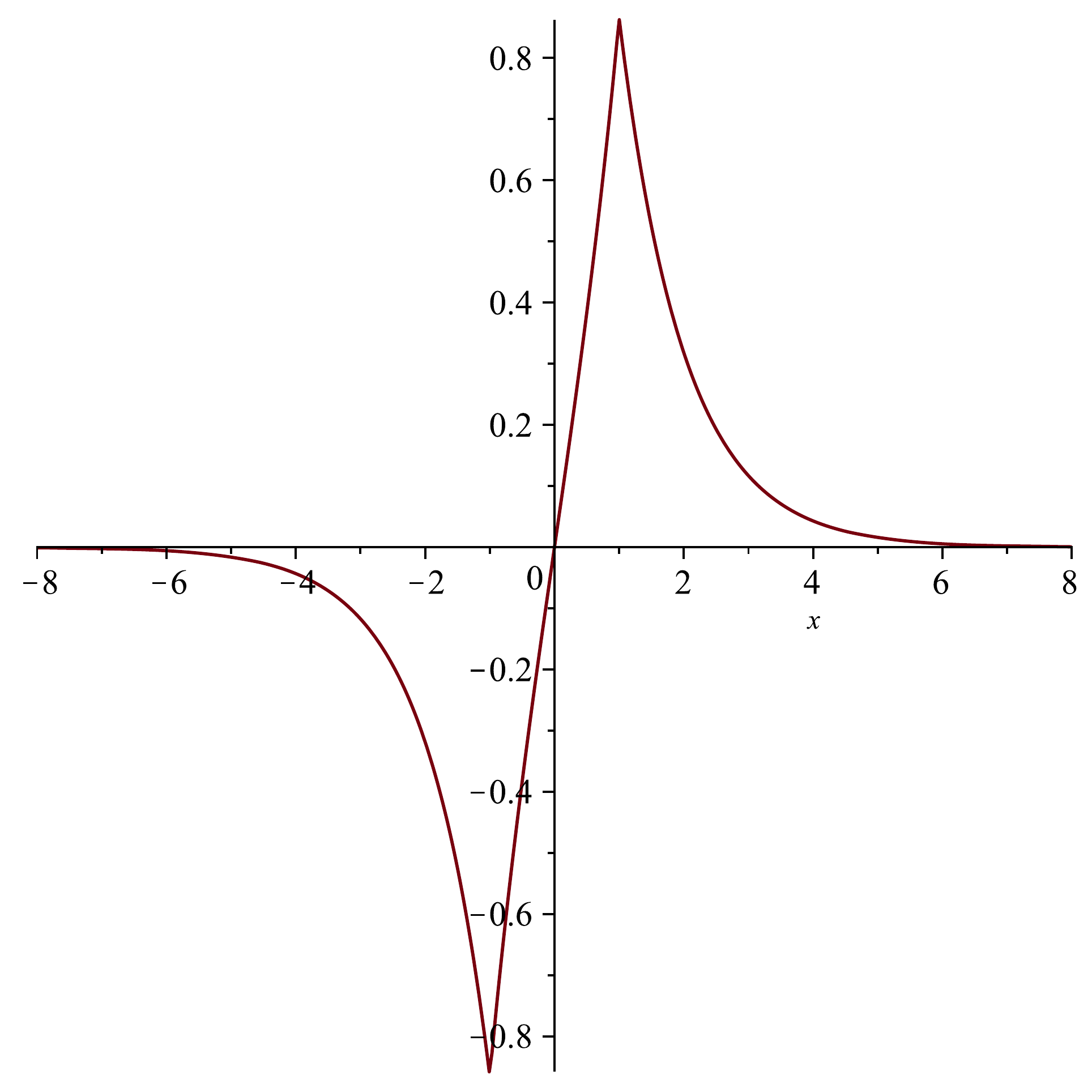}
\caption{\small{The two pseudo-peakon interaction plotted in 3D for negative time $t$. %??? %The M-shape peakon solution given by (\ref{suv21c1}). Solid line: $u(x,t)$; Dashed line: $v(x,t)$; Black: $t=0$; Blue: $t=-1$.
}}
\label{F21}
\end{minipage}
\hspace{2.9ex}
\begin{minipage}[t]{0.5\linewidth}
\centering
\includegraphics[width=3.3in]{Pseudo-peakon-3d-rev.JPG}
\caption{\small{The two pseudo-peakon interaction plotted in 3D for all times.
%The two-peakon solution determined by (\ref{42pq2}). Solid line: $u(x,t)$; Dashed line: $v(x,t)$; Black: $t=-0.5$; Blue: $t=-1$.
}}
\label{F31}
\end{minipage}
\end{figure}

%\begin{figure}%[H]
%\centering
%\includegraphics[height=6cm,width=10cm]{2-Pseudo-peakon.pdf}
%\caption{ The two pseudo-peakon interaction}.
%\label{fig:FOCH2}
%\end{figure}

%\begin{center}
%\resizebox{3.20in}{!}{\includegraphics{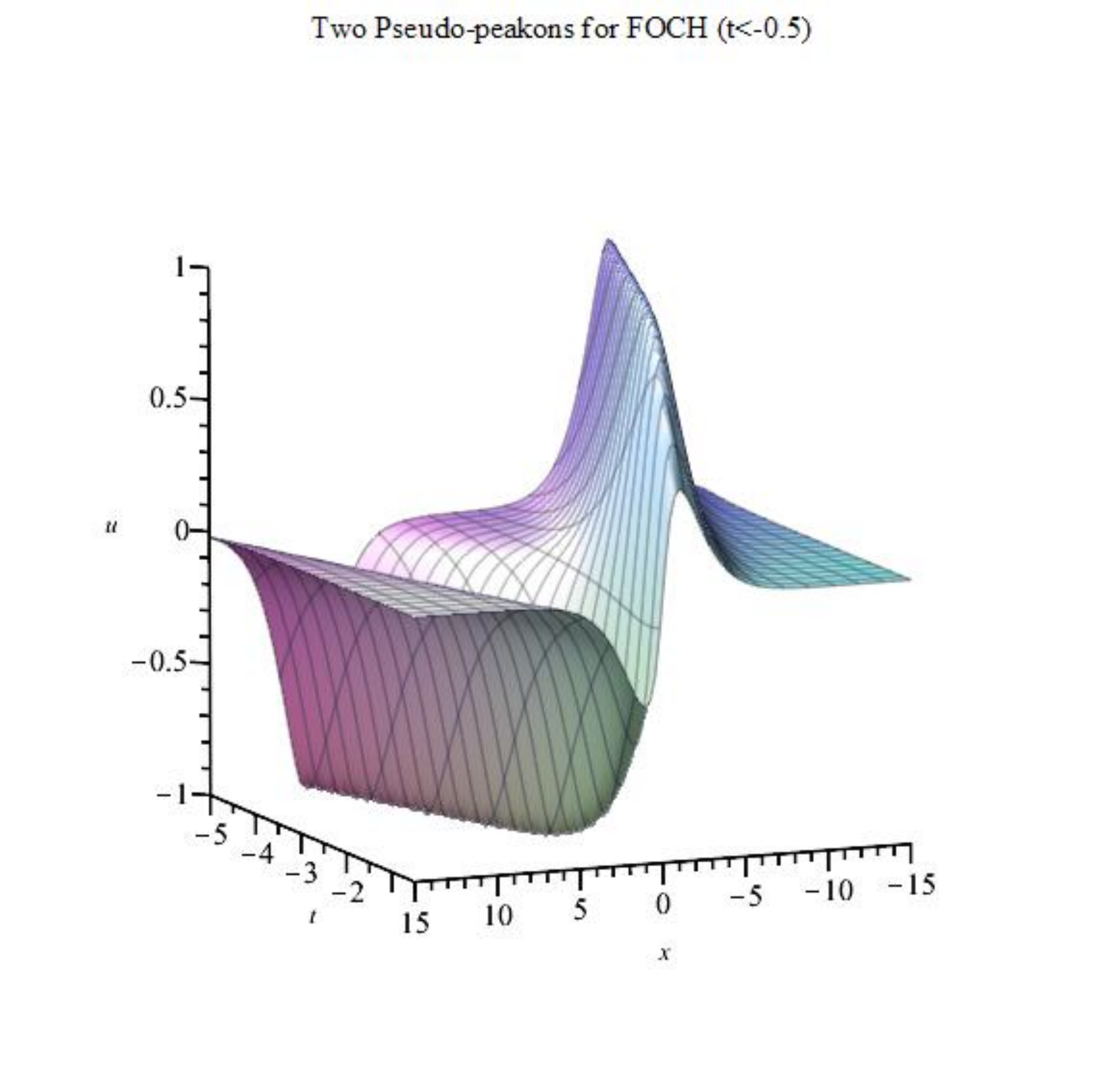}}
%\resizebox{3.20in}{!}{\includegraphics{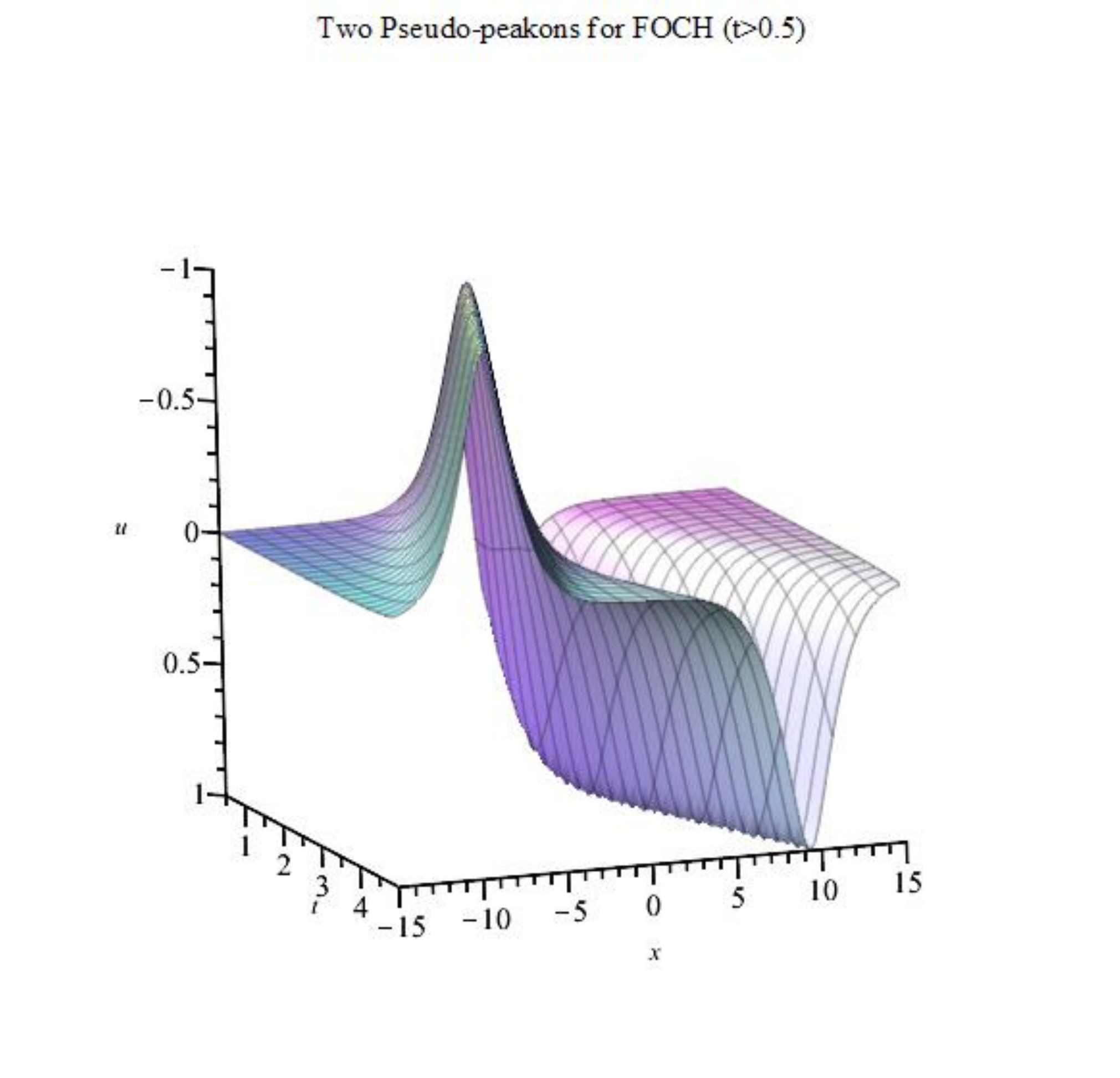}}
%\end{center}

%\begin{figure}%[H]
%\centering
%\includegraphics[height=6.5cm,width=12cm]{Pseudo-peakon-3d.pdf}
%\caption{ The two pseudo-peakon interaction plotted in 3D}.
%\label{fig:FOCH2}
%\end{figure}
\begin{figure}
\begin{minipage}[t]{0.5\linewidth}
\centering
\includegraphics[width=3.3in]{Pseudo-peakon-3d-Ngtime.pdf}
\caption{\small{The two pseudo-peakon interaction plotted in 3D for negative time $t$. %??? %The M-shape peakon solution given by (\ref{suv21c1}). Solid line: $u(x,t)$; Dashed line: $v(x,t)$; Black: $t=0$; Blue: $t=-1$.
}}
\label{F3N}
\end{minipage}
\hspace{2.9ex}
\begin{minipage}[t]{0.5\linewidth}
\centering
\includegraphics[width=3.3in]{Pseudo-peakon-3d-PS-time.pdf}
\caption{\small{The two pseudo-peakon interaction plotted in 3D for positive time $t$. %??? %The two-peakon solution given by (\ref{suv21c2}). Solid line: $u(x,t)$; Dashed line: $v(x,t)$; Black: $t=0$; Blue: $t=-1$.
}}
\label{F3P}
\end{minipage}
%\hspace{1.9ex}
%\begin{minipage}[t]{0.3\linewidth}
%\centering
%\includegraphics[width=2.3in]{Pseudo-peakon-3d-rev.JPG}
%\caption{\small{The two pseudo-peakon interaction plotted in 3D
%The two-peakon solution determined by (\ref{42pq2}). Solid line: $u(x,t)$; Dashed line: $v(x,t)$; Black: $t=-0.5$; Blue: $t=-1$.
%}}
%\label{F23}
%\end{minipage}
\end{figure}

\begin{remark}
The plots below show $2$-pseudo-peakons in 3D.
Figure \ref{F31} shows a 3D interactional dynamics of the $2$-pseudo-peakon solution for all times. Figure \ref{F3N} and Figure \ref{F3P} show a 3D interactional dynamics of the two-pseudo-peakon solution for the negative times and the positive times, respectively. During the interaction of two-pseudo-peakons, it follows from the explicit solution (\ref{2-Pseudo-Peakon-u}) that the solution $u$ suddenly crashes to zero when the time $t$ passes from negative to positive via $t=0$. After the time $t=0$, the two-pseudo-peakon solution continues
travelling from left to right, but the amplitudes already flipped along with the time (see Figure \ref{F3N} and Figure \ref{F3P} for details).
\end{remark}

{\color{black} Now we consider multi-pseudo-peakons in full generality. It is known that the Hamiltonian system
(\ref{hs1})--(\ref{hs3})
%the multi-pseudo-peakon system
 ---that in our context will be called the multi-pseudo-peakon system--- is completely
integrable in the Liouville sense. This fact is discussed in \cite{reyes:CH} and fully studied by
 Calogero and Fran\c{c}oise in \cite{CF}. Since the Hamiltonian (\ref{hs3}) is not continuously differentiable,
 we cannot conclude that the trajectories of the system are given by quadratures via the Arnold-Liouville theorem.
 {\em However}, in the papers \cite{reyes:bss,reyes:bss1} Beals, Sattinger and Szmigielski are able to
solve the Hamiltonian system (\ref{hs1})--(\ref{hs3}) via inverse spectral methods and continued fractions. More
precisely we have, after the summary appearing in \cite[Theorem 2.1]{CCH}:

\begin{theorem}
The solutions of the Hamiltonian system
\begin{equation} \label{hsj}
\frac{d x_j}{d\tau} = \frac{\partial H}{\partial m_j}\; , \quad \; \frac{d m_j}{d\tau} = -
\frac{\partial H}{\partial x_j} \; , \quad \; H(x_1, \cdots,x_N,m_1,\cdots , m_N) = \frac{1}{4}
\sum_{j,k=1}^N m_j m_k e^{-2|x_j - x_k|}
\end{equation}
are given by
\begin{equation} \label{hss}
x_j = \frac{1}{2}\log \left( \frac{1+y_j}{1-y_j} \right) \; , \quad \quad \quad m_j = g_j (1-y_j^2)\; ,
\end{equation}
in which
$$
y_j = 1 - \frac{\Delta_{N-j}^2}{\Delta_{N-j+1}^0}\; , \quad \quad \quad
g_j = \frac{(\Delta_{N-j+1}^0)^2}{\Delta_{N-j+1}^1\,\Delta_{N-j}^1} \; .
$$
The functions $\Delta_k^l(\tau)$ are given by
$$
\Delta_k^l = \det( A_{i+j+l} (\tau) )_{i,j=1}^{k-1}\; , \quad \quad
A_k(\tau) = \sum_{j=0}^N (-\lambda_j)^k\,a_j(\tau)\; , \quad a_0(\tau) = \frac{1}{2}\; , \quad \lambda_0 = 0\; ,
$$
with $\frac{d}{d\tau} a_j(\tau) = - \frac{2 a_j(\tau)}{\lambda_j}$ and $\lambda_j \neq 0$ for $j \geq 1$.
\end{theorem}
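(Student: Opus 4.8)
The plan is to prove the theorem by the inverse spectral method of Beals, Sattinger and Szmigielski: one trivializes the nonlinear peakon flow by passing to spectral coordinates, in which it becomes the linear evolution $\dot a_j = -2a_j/\lambda_j$, and then recovers the positions and momenta through the inverse problem for an associated discrete string. A preliminary reduction is needed because $H$ in (\ref{hsj}) fails to be $C^1$ across the hyperplanes $x_i = x_j$. First I would restrict to the open chamber $x_1 < x_2 < \cdots < x_N$, on which every $\mathrm{sgn}(x_j - x_k)$ is constant and the Hamiltonian vector field is real-analytic; since the ordering is preserved by the flow, the restriction loses no generality, and on this chamber (\ref{hsj}) is a genuine smooth ODE to which existence and uniqueness apply.

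Second I would set up the Lax (isospectral) structure. The system (\ref{hsj}) admits a discrete Lax pair tied to a string-type boundary value problem whose spectrum $\{\lambda_j\}$ consists of the conserved quantities; from the Lax equation one checks directly that $\dot\lambda_j = 0$. The genuinely easy step is the linearization of the flow in spectral variables: introducing the Weyl function of the string, which is rational with simple poles at the $\lambda_j$ and residues $a_j(\tau)$, a short computation gives $\dot a_j = -2a_j/\lambda_j$, exactly as stated. Thus in the coordinates $(\lambda_j, a_j)$ the dynamics is explicitly integrated, with $\lambda_j$ frozen and $a_j(\tau) = a_j(0)\,e^{-2\tau/\lambda_j}$.

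Third, and this is the crux, I would solve the inverse problem, reconstructing $(x_j, m_j)$ from the spectral data. The Weyl function of a discrete string admits a Stieltjes continued-fraction expansion whose partial quotients encode the gaps $x_{j+1}-x_j$ and the masses $m_j$. Expanding this continued fraction and expressing its coefficients through the Hankel determinants $\Delta_k^l = \det(A_{i+j+l})_{i,j=1}^{k-1}$ built from the moments $A_k = \sum_j (-\lambda_j)^k a_j$ yields precisely the stated formulas for $y_j$ and $g_j$, and hence (\ref{hss}). Composing the spectral map, the explicit linear time evolution, and this inverse map produces a curve in the ordered chamber; matching it to the initial data and invoking uniqueness for the smooth ODE there identifies it as the solution.

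The main obstacle will be this inverse-spectral reconstruction: one must prove that the continued-fraction coefficients are indeed the displayed ratios of Hankel determinants and that these determinants are nonvanishing on the ordered chamber, so that $y_j \in (-1,1)$ and the logarithm in (\ref{hss}) is well defined. This rests on the classical theory of orthogonal polynomials and Hankel-determinant identities, together with the total positivity available when all $\lambda_j$ and $a_j$ are positive. Keeping track of the index shift between $\Delta_{N-j}^l$ and $\Delta_{N-j+1}^l$, and reconciling the factor-of-two rescaling between (\ref{hsj}) and the reduced peakon system (\ref{hs1})--(\ref{hs3}), is where the bookkeeping is delicate; the conceptual content, by contrast, is entirely carried by the linearization in Step two.
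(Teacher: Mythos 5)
The paper offers no proof of this theorem at all: it is quoted, with attribution, from the Beals--Sattinger--Szmigielski inverse-spectral solution of the peakon system as summarized in \cite[Theorem 2.1]{CCH}, and your proposal reconstructs precisely the strategy of those cited works --- isospectral discrete string problem, linear evolution $\dot a_j = -2a_j/\lambda_j$ of the Weyl-function residues, and Stieltjes continued-fraction/Hankel-determinant inversion --- so it is essentially the same approach the paper relies on. The only point to make explicit is that your claim that the ordering chamber is preserved (no peakon--antipeakon collisions) and that the determinants $\Delta_k^l$ are nonvanishing is exactly what the standing hypotheses recorded right after the theorem (distinct same-sign $\lambda_j$, positive $a_j(0)$) are there to guarantee.
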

\noindent In this theorem we assume that the numbers $\lambda_j$ are all distinct and have the same sign, and that
the initial conditions $a_j(0)$ are positive, see \cite[p. 158]{CCH}.

Now, in order to obtain solutions to the system (\ref{hs1})--(\ref{hs3}), we apply the symplectic transformation
$x_j \mapsto (1/2)\, q_j$, $m_j \mapsto 2 p_j$. Then, the Hamiltonian appearing in  (\ref{hsj}) becomes
(\ref{hs3}), and trajectories of (\ref{hsj}) map onto trajectories of (\ref{hs1})--(\ref{hs3}). We adjust the time
evolution by setting $t = 2 \tau$ and we are ready: replacing $q_j(t)$ and $p_j(t)$ into our formula (\ref{mpp})
we obtain explicit expressions for $N$-pseudo-peakons.
}

\section{The Cauchy problem for the CH-type equation}
In this section we consider the Cauchy problem for the fifth order CH-type equation \eqref{FOCHInt1}. We rewrite
\eqref{FOCHInt1} as
\begin{eqnarray} \label{CHtype}
\left\{
  \begin{array}{ll}
m_t+2 \,(u-u_{xx})_x\, m + (u-u_{xx})\, m_x = 0,\\
m = -A_4(u)=(1-\partial_x^2)^2u=u-2u_{xx}+u_{xxxx}\; .
  \end{array}
\right.\end{eqnarray}

The operator $(1-\partial_x^2)^{-2}$ can be expressed by
$$(1-\partial_x^2)^{-2}f=G*f=\int_{\R}G(x-y)f(y)dy$$
for any $f\in L^2(\R)$ with $G=\frac{1}{4}(1+|x|)e^{-|x|}$.
It follows
\begin{eqnarray}\label{u}
u(x,t)&=&G*m=\frac{1}{4}\int_{\mathbb{R}}(1+|x-\xi|)e^{-|x-\xi|}m(\xi,t)d\xi\nonumber\\
&=&\frac{1}{4}e^{-x}\int_{-\infty}^x(1+x-\xi)e^{\xi}m(\xi,t)d\xi+\frac{1}{4}e^{x}\int_x^{+\infty}(1-x+\xi)e^{-\xi}m(\xi,t)d\xi.\nonumber\\
\end{eqnarray}
Then,
\begin{eqnarray*}
u_x(x,t)&=&-\frac{1}{4}e^{-x}\int_{-\infty}^x(1+x-\xi)e^{\xi}m(\xi,t)d\xi+\frac{1}{4}e^{x}\int_x^{+\infty}(1-x+\xi)e^{-\xi}m(\xi,t)d\xi\nonumber\\
&&+\frac{1}{4}e^{-x}\int_{-\infty}^xe^{\xi}m(\xi,t)d\xi-\frac{1}{4}e^{x}\int_x^{+\infty}e^{-\xi}m(\xi,t)d\xi
\end{eqnarray*}
and
\begin{eqnarray}\label{uxx}
u_{xx}(x,t)&=&\frac{1}{4}e^{-x}\int_{-\infty}^x(1+x-\xi)e^{\xi}m(\xi,t)d\xi+\frac{1}{4}e^{x}\int_x^{+\infty}(1-x+\xi)e^{-\xi}m(\xi,t)d\xi\nonumber\\
&&-\frac{1}{2}e^{-x}\int_{-\infty}^xe^{\xi}m(\xi,t)d\xi-\frac{1}{2}e^{x}\int_x^{+\infty}e^{-\xi}m(\xi,t)d\xi.
\end{eqnarray}
\eqref{u} minus \eqref{uxx}, we have
\begin{eqnarray}\label{u-uxx}
(u-u_{xx})(x,t)=\frac{1}{2}e^{-x}\int_{-\infty}^xe^{\xi}m(\xi,t)d\xi+\frac{1}{2}e^{x}\int_x^{+\infty}e^{-\xi}m(\xi,t)d\xi.
\end{eqnarray}
Differentiating $u-u_{xx}$ with respect to $x$, we have
\begin{eqnarray}\label{ux-uxxx}
(u_x-u_{xxx})(x,t)=-\frac{1}{2}e^{-x}\int_{-\infty}^xe^{\xi}m(\xi,t)d\xi+\frac{1}{2}e^{x}\int_x^{+\infty}e^{-\xi}m(\xi,t)d\xi.
\end{eqnarray}

\subsection{Local well-posedness and blow-up scenario}
Firstly, we present the local well-posedness theorem for the CH type equation \eqref{CHtype}.
\begin{theorem}\label{Local}
Let $u_0\in H^s(\mathbb{R})$ with $s>\frac{7}{2}$. Then there exist a $T>0$ depending on $\|u_0\|_{H^s}$, such
that the CH type equation \eqref{CHtype} has a unique solution
$$u\in C([0,T);H^s(\mathbb{R}))\cap C^1([0,T);H^{s-1}(\mathbb{R})).$$
Morever, the map $u_0\in H^s\rightarrow u\in C([0,T);H^s(\mathbb{R}))\cap C^1([0,T);H^{s-1}(\mathbb{R}))$ is
continuous.
\end{theorem}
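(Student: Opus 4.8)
The plan is to treat \eqref{CHtype} as a transport equation for the momentum density $m$ whose velocity field is smoother than $m$ itself, and then run the standard Littlewood--Paley/energy machinery for such equations. Writing $v = u - u_{xx} = (1-\partial_x^2)u$, the system \eqref{CHtype} becomes the transport equation
$$
m_t + v\,m_x = -2\,v_x\,m\; , \qquad v = (1-\partial_x^2)^{-1}m\; , \qquad u = (1-\partial_x^2)^{-2}m = G*m\; .
$$
The decisive structural observation is that the velocity $v$ is obtained from $m$ by the smoothing operator $(1-\partial_x^2)^{-1}$, so it is two derivatives \emph{more} regular than the transported quantity: if $u \in H^s$, then $m \in H^{s-4}$ while $v \in H^{s-2}$. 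In particular $v \in C^1(\R)$ exactly when $s-2 > 3/2$, i.e. when $s>7/2$, and this is precisely the threshold at which the velocity has a bounded derivative and a well-defined flow. This is the conceptual reason for the exponent $s>7/2$, and it is where the problem departs from Camassa--Holm, whose velocity has the same regularity as the solution.

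First I would establish the a priori estimate. Setting $\sigma = s-4$ and $\Lambda = (1-\partial_x^2)^{1/2}$, I would apply $\Lambda^{\sigma}$ to the transport equation, pair with $\Lambda^{\sigma}m$, and use $\|m\|_{H^\sigma} \sim \|u\|_{H^s}$. Integrating by parts in the principal transport term produces a quantity bounded by $\tfrac12\|v_x\|_{L^\infty}\|u\|_{H^s}^2$, while the commutator $[\Lambda^\sigma, v]m_x$ is controlled by a Kato--Ponce/paraproduct commutator estimate, which closes because $v$ is two orders smoother than $m$. The source term is handled by the product estimate $\|v_x m\|_{H^\sigma} \lesssim \|v_x\|_{H^{s-3}}\|m\|_{H^\sigma}$, valid for $s>7/2$, and the transport inequality in $H^\sigma$ is itself available precisely for $\sigma > -1/2$, i.e. again for $s>7/2$. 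Altogether I obtain $\tfrac{d}{dt}\|u\|_{H^s}^2 \le C(\|u\|_{H^s})\,\|u\|_{H^s}^2$, which yields a lifespan $T>0$ depending only on $\|u_0\|_{H^s}$.

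With the a priori bound in hand I would construct the solution by an iteration scheme: solve the linear transport equation with the velocity frozen from the previous iterate, obtain uniform $H^s$ bounds from the estimate above, and show that the iterates are Cauchy in the weaker norm $H^{s-1}$; passing to the limit and interpolating gives $u \in C([0,T);H^s)\cap C^1([0,T);H^{s-1})$. Uniqueness follows from the same transport estimate applied to the difference of two solutions in $H^{s-1}$. For the continuous-dependence statement the difference estimate only closes in the lower norm, so upgrading to continuity of $u_0 \mapsto u$ in $H^s$ requires a Bona--Smith double-mollification argument; alternatively, the entire existence/uniqueness/continuous-dependence package can be obtained by verifying Kato's conditions for the quasilinear operator $A(u) = v\partial_x$ in the spaces $(H^{s-4},H^{s-5})$ for $m$, equivalently $(H^s,H^{s-1})$ for $u$.

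The main obstacle is the commutator/transport estimate at the critical regularity: since $\sigma = s-4$ can be negative (for $7/2 < s < 4$), one must use a Littlewood--Paley decomposition and redistribute derivatives so that the $L^\infty$ burden falls on the smoother velocity $v$ rather than on $m$, whose derivatives need not even be bounded; this is exactly where the lower bound $\sigma > -1/2 \Leftrightarrow s>7/2$ is forced. The second delicate point is the continuous-dependence step, where the natural difference estimate loses one derivative and the Bona--Smith argument is needed to recover continuity in the strong topology.
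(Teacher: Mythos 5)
Your proposal is sound, but it follows a genuinely different route from the paper. The paper does not actually carry out a proof: it invokes Kato's quasilinear semigroup theory and states that the argument is similar to those in Constantin--Escher \cite{CE} and Rodr\'iguez-Blanco \cite[Section 3]{Guill}, omitting all details. In that framework one writes the equation on the $u$ side as $u_t + A(u)u = f(u)$ with $A(u) = v\,\partial_x$, $v = (1-\partial_x^2)^{-1}m$, verifies quasi-m-accretivity and the commutator hypotheses in the pair $(H^s, H^{s-1})$, and obtains existence, uniqueness and continuous dependence in one package --- essentially the CH proof with the velocity $u$ replaced by the smoother velocity $v$, which is why the threshold shifts from $s>3/2$ to $s>7/2$. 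You instead work on the $m$ side, treating the system as a transport equation with a velocity two derivatives smoother than the transported density, and run the Littlewood--Paley/energy machinery: commutator and product estimates in $H^{\sigma}$, $\sigma = s-4$, an iteration scheme with contraction in the weaker norm, and Bona--Smith for continuous dependence. Your threshold bookkeeping is correct and is the real payoff of your approach: both constraints ($v_x \in L^\infty$, i.e.\ $s-3>1/2$, and validity of the transport/product estimates at negative regularity, $\sigma > -1/2$) independently force $s>7/2$, which makes the exponent in the statement conceptually transparent rather than an artifact of the method; your route also handles the genuinely negative-index range $7/2<s<4$ explicitly and would extend to Besov-space data. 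What the paper's (cited) route buys is economy: given Kato's theorem, the verification is short and delivers the full well-posedness statement at once. One small caution on your closing remark: if you do run Kato's theorem, do it in the pair $(H^s,H^{s-1})$ for $u$ rather than $(H^{s-4},H^{s-5})$ for $m$; verifying accretivity of $v\,\partial_x$ in Sobolev spaces of negative index is delicate, and the standard hypotheses are checked much more cleanly on the $u$ side.
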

The proof (via Kato's theory) is similar to the ones appearing in \cite{CE} and
\cite[Section 3]{Guill}, and therefore we have omitted the details.  
The maximum value of $T$ appearing in Theorem \ref{Local} is called the lifespan of the solution. If
 $T<\infty$, that is, $$\lim_{t\to T^-}\|u\|_{H^s}=\infty\; ,$$ we say the solution blows up in finite time.  
Now we can use one of the conserved densities found in Section 2, namely, the Hamiltonian (\ref{cd8}). We have 
\begin{eqnarray}
\int_{\R} (u^2+3u_x^2+3u_{xx}^2+u_{xxx}^2) dx = \int_{\R}(u_0^2+3u_{0x}^2+3u_{0xx}^2+u_{0xxx}^2) dx := E_0^2\; .
\end{eqnarray}
We note that $$\|u_{x}\|_{L^\infty}\leq \frac{\|u_{x}\|_{H^1}}{\sqrt{2}}\leq \frac{E_0}{\sqrt{2}}\; .$$ 
This means that the first-order derivative of $u$ is always bounded. Let us present the precise blow-up scenario.

\begin{theorem}\label{blow-up scenario}
 Assume that $u_0\in H^4(\mathbb{R})$ and let $T$ be the maximal existence time of the solution $u(x,t)$ to the CH type equation \eqref{CHtype} with the initial data $u_0(x)$, then the  corresponding solution of the CH type equation \eqref{CHtype} blows up in finite time if and only if
{\color{black}\begin{eqnarray*}
\lim_{t\to T}\sup_{x\in{\mathbb{R}}}\{(u_{xxx})(x,t)\}=\infty.
 \end{eqnarray*}}
\end{theorem}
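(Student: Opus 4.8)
\medskip

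The plan is to reduce the blow-up question to the boundedness of a single energy functional, namely $\|m\|_{L^2}^2$ with $m=(1-\partial_x^2)^2u$. By Plancherel, $\|m\|_{L^2}^2=\int_{\R}(1+\xi^2)^4|\hat u(\xi)|^2\,d\xi$ is equivalent to $\|u\|_{H^4}^2$, so in view of the local theory (Theorem \ref{Local} with $s=4$) and its standard continuation criterion, the solution extends past a finite $T$ exactly when $\|m(\cdot,t)\|_{L^2}$ stays bounded as $t\uparrow T$. I would thus prove the equivalence of two boundedness statements near $T$: that of $\|u\|_{H^4}$ and that of $\sup_x u_{xxx}$; the blow-up dichotomy of the theorem is the contrapositive of this equivalence. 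One implication is immediate from Sobolev embedding: since $H^4(\R)\hookrightarrow C^3_b(\R)$, one has $\sup_x u_{xxx}(\cdot,t)\le\|u_{xxx}(\cdot,t)\|_{L^\infty}\le C\,\|u(\cdot,t)\|_{H^4}$, so boundedness of the $H^4$ norm forces boundedness of $\sup_x u_{xxx}$ (equivalently, if $\sup_x u_{xxx}\to\infty$ then $\|u\|_{H^4}\to\infty$). The substantive converse is that control of $\sup_x u_{xxx}$ alone prevents blow-up.

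\medskip

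For that converse the key is an energy identity with a favourable cancellation. Writing the equation as $m_t=-2v_x m-v\,m_x$ with $v=u-u_{xx}$, I would compute
\[
\frac{d}{dt}\int_{\R}m^2\,dx=-4\int_{\R}v_x m^2\,dx-\int_{\R}v\,(m^2)_x\,dx=-3\int_{\R}v_x m^2\,dx,
\]
where the second equality integrates the transport term by parts (the boundary terms at $\pm\infty$ vanish since $H^4$ functions decay). Substituting $v_x=u_x-u_{xxx}$ splits the right-hand side into two pieces that are estimated differently: the term $-3\int u_x m^2$ is bounded by $\tfrac{3}{\sqrt2}E_0\,\|m\|_{L^2}^2$ using the conserved quantity (\ref{cd8}), which as recorded before the theorem yields the time-uniform bound $\|u_x\|_{L^\infty}\le E_0/\sqrt2$, while $3\int u_{xxx}m^2\le 3\big(\sup_x u_{xxx}\big)\int m^2$ pointwise because $m^2\ge0$. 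Hence
\[
\frac{d}{dt}\|m\|_{L^2}^2\le\Big(\tfrac{3}{\sqrt2}E_0+3\sup_x u_{xxx}\Big)\|m\|_{L^2}^2,
\]
and if $\sup_x u_{xxx}\le K$ on $[0,T)$ then Gronwall's inequality bounds $\|m(\cdot,t)\|_{L^2}$ by $\|m_0\|_{L^2}\exp\big((\tfrac{3}{\sqrt2}E_0+3K)T\big)$, so the solution cannot blow up at $T$. It is precisely the \emph{positive} part of $u_{xxx}$ that drives the growth, which is exactly why the criterion is phrased with $\sup_x u_{xxx}$ rather than $\|u_{xxx}\|_{L^\infty}$.

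\medskip

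The main obstacle is rigour rather than strategy. For $u_0\in H^4$ the solution lies only in $C([0,T);H^4)\cap C^1([0,T);H^3)$, so $m_t\in H^{-1}$ and the formal pairing $\int m\,m_t\,dx$ together with the integration by parts above are not literally legitimate at this regularity. I would therefore perform the computation on a regularised problem --- either by mollifying the initial data, solving by Theorem \ref{Local} and invoking persistence of higher regularity, or by applying Friedrichs mollifiers $J_\varepsilon$ to the equation and absorbing the commutators $[J_\varepsilon,v\,\partial_x]$ through standard mollifier estimates --- derive the differential inequality for the regularised energy, and pass to the limit $\varepsilon\to0$ using the uniform bounds. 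Once this approximation step is secured, the Gronwall estimate and the Sobolev embedding close the two implications and give the stated equivalence.
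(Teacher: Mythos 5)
Your proposal is correct and follows essentially the same route as the paper's proof: the same $L^2$ energy identity $\frac{d}{dt}\int m^2\,dx=-3\int v_x m^2\,dx$, the same splitting of $v_x=u_x-u_{xxx}$ using the conserved quantity $E_0$ to bound $u_x$, the same Gronwall argument, and the same Sobolev embedding for the converse implication. The only difference is that you flag and repair (via mollification) the regularity issue in justifying the energy computation at the $H^4$ level, a point the paper's proof passes over silently.
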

\begin{remark}
This theorem means that the blow-up mechanism of (\ref{CHtype}) is not the same as in the CH case: for the CH equation, see \cite{CE,CE1}, blow-up phenomenon means that the first-order derivative tends to negative infinity. However, for our equation blow-up means that the third-order derivative tends to infinity.
\end{remark}
\begin{proof}
By direct calculation, we have
$$\|u\|_{H^4}^2\leq\|m\|_{L^2}^2\leq 3\|u\|_{H^4}^2\; .$$
Multiplying \eqref{CHtype} by $m$, we obtain by straighforward computations,
\begin{eqnarray*}
\frac{d}{dt}\int_\mathbb{R}m^2dx&=&-2\int_\mathbb{R}2\,(-u_{xx} + u)_x\, m^2 + (-u_{xx} + u)\, m_x\,mdx\\
&\leq&-3\inf_{x\in{\mathbb{R}}}\{u_x-u_{xxx}\}\int_\mathbb{R}m^2dx\\
&\leq&{\color{black} \left(\frac{3}{\sqrt{2}}E_0+3\sup_{x\in{\mathbb{R}}}\{u_{xxx}\}\right)\int_\mathbb{R}m^2dx.}
\end{eqnarray*}
If $$\sup_{x\in{\mathbb{R}}}\{u_{xxx}\}\leq M,$$ then
\begin{eqnarray*}
\frac{d}{dt}\int_\mathbb{R}m^2dx\leq{\color{black}\left(\frac{3}{\sqrt{2}}E_0+3M\right)}\int_\mathbb{R}m^2dx.
\end{eqnarray*}
By using the Gronwall inequality,
\begin{eqnarray*}
\|m\|_{L^2}^2\leq e^{{\color{black}\left(\frac{3}{\sqrt{2}}E_0+3M\right)}}\|m_0\|_{L^2}^2.
\end{eqnarray*}
Therefore the $H^4$-norm of the solution is bounded on $[0,T)$. On the other hand, by the Sobolev's embedding
{\color{black}$\|u_{xxx}\|_{L^\infty}\leq\|u\|_{H^4}$}. This inequality tells us that if $H^4$-norm of the solution is
bounded, then the $L^\infty$-norm of {\color{black}$u_{xxx}$} is bounded. We have completed the proof of Theorem
\ref{blow-up scenario}.
\end{proof}
\subsection{Global existence and blow up phenomena}
In this subsection, firstly, we establish a sufficient condition that
guarantees global existence of the solution to our CH type equation \eqref{CHtype}. We give the particle
trajectory as
\begin{eqnarray}\label{PT}
\left\{
  \begin{array}{ll}
   q_t(x,t)=(u-u_{xx})(q(x,t),t), \qquad 0<t<T, x\in \mathbb{R},\\
   q(x,0)=x, \qquad x\in \mathbb{R},
  \end{array}
\right.
\end{eqnarray}
where $T$ is the lifespan of the solution.
Taking derivative of \eqref{PT} with respect to $x$, we obtain
$$\frac{dq_t(x,t)}{dx}=q_{tx}=((u_x-u_{xxx})q_x)(q(x,t),t), \qquad t\in(0,T).$$
Therefore
\begin{eqnarray*}
\left\{
  \begin{array}{ll}
   q_x=exp\{\int^t_0(u_x-u_{xxx})(q,s)ds\}, \quad 0<t<T,\quad x\in \mathbb{R},\\
   q_x(x,0)=1, \qquad x\in \mathbb{R},
  \end{array}
\right.
\end{eqnarray*}
which is always positive before the blow-up time. Therefore, the function $q(x,t)$ is an increasing diffeomorphism
of the line before blow-up. In fact, direct calculation yields
$$\frac{d}{dt}(m(q(x,t),t)q^{b}_x)=[m_t(q)+(u_x-u_{xxx})(q,t)m_x(q)+2(u_x-u_{xxx})(q,t)m(q)]q^{2}_x=0.$$
Hence, the following identity can be proved:
\begin{eqnarray}\label{mq}
m(q(x,t),t)q^2_x=m_0(x),\qquad 0<t<T, x\in \mathbb{R}.
\end{eqnarray}
From \eqref{mq}, we know that if the initial data $m_0(x,t)\geq0$, then $m(q(x_0,t),t)\geq0$.
Before stating our main results, we recall once again the useful conservation law 
\begin{eqnarray}\label{CL1}
\int_{\R} (u^2+3u_x^2+3u_{xx}^2+u_{xxx}^2) dx = \int_{\R}(u_0^2+3u_{0x}^2+3u_{0xx}^2+u_{0xxx}^2) dx := E_0^2
\end{eqnarray}
found in Section 2.

\begin{theorem}\label{GE1}
Suppose that $u_0\in H^4(\R)$, and
$m_0=(1-\partial_x^2)^2u_0$ does not change sign. Then the
corresponding solution to the CH-type equation \eqref{CHtype} exists globally.
\end{theorem}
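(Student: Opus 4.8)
The plan is to reduce global existence to a uniform-in-time bound on $\sup_{x}u_{xxx}$, since by Theorem \ref{blow-up scenario} such a bound is exactly what rules out blow-up, and by the norm equivalence $\|u\|_{H^4}^2\le\|m\|_{L^2}^2\le 3\|u\|_{H^4}^2$ this keeps the $H^4$-norm finite on the whole existence interval. The two ingredients I would combine are the sign-preservation of $m$ along the flow \eqref{PT} and the conservation law \eqref{CL1}.

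First I would check that the hypothesis propagates in time. From the transport identity \eqref{mq}, $m(q(x,t),t)\,q_x^2=m_0(x)$, together with the positivity $q_x>0$ obtained from \eqref{PT}, the sign of $m(q(x,t),t)$ coincides with the sign of $m_0(x)$. Because $q(\cdot,t)$ is an increasing diffeomorphism of the line, this forces $m(\cdot,t)$ to keep the constant sign of $m_0$ for every $t\in[0,T)$; I treat the case $m(\cdot,t)\ge 0$, the case $m\le 0$ being identical.

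Next I would extract a pointwise bound on $v_x:=u_x-u_{xxx}$ in terms of $v:=u-u_{xx}$. Writing \eqref{u-uxx} and \eqref{ux-uxxx} as $v=P+Q$ and $v_x=Q-P$, where
$$P=\frac{1}{2}e^{-x}\int_{-\infty}^{x}e^{\xi}m(\xi,t)\,d\xi\ge 0,\qquad Q=\frac{1}{2}e^{x}\int_{x}^{+\infty}e^{-\xi}m(\xi,t)\,d\xi\ge 0,$$
the sign of $m$ makes $P,Q\ge 0$, so $|v_x|=|Q-P|\le P+Q=|v|$ pointwise. To bound $|v|$ I would use the algebraic identity $\|v\|_{H^1}^2=E_0^2$: integrating $(u-u_{xx})^2$ and $(u_x-u_{xxx})^2$ by parts turns $\int_{\R}(v^2+v_x^2)\,dx$ into $\int_{\R}(u^2+3u_x^2+3u_{xx}^2+u_{xxx}^2)\,dx$, which is the conserved quantity \eqref{CL1}. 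The Sobolev embedding $\|v\|_{L^\infty}^2\le\frac{1}{2}\|v\|_{H^1}^2$ then gives $\|u-u_{xx}\|_{L^\infty}\le E_0/\sqrt{2}$ for all $t$. Combining, $|u_x-u_{xxx}|=|v_x|\le|v|\le E_0/\sqrt{2}$, and since $\|u_x\|_{L^\infty}\le E_0/\sqrt{2}$ was already noted before Theorem \ref{blow-up scenario}, I obtain $|u_{xxx}|\le|u_x|+|v_x|\le\sqrt{2}\,E_0$ uniformly on $[0,T)$; hence $\limsup_{t\to T}\sup_{x}u_{xxx}(x,t)\le\sqrt{2}\,E_0<\infty$, and Theorem \ref{blow-up scenario} forbids blow-up, forcing $T=\infty$.

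I expect the decisive step to be conceptual rather than computational: recognizing that the conserved energy \eqref{CL1} is literally $\|u-u_{xx}\|_{H^1}^2$. This is what converts the one-sided control of $\sup_x u_{xxx}$ furnished by Theorem \ref{blow-up scenario} into a genuine two-sided $L^\infty$ bound, once sign-definiteness of $m$ supplies the crucial inequality $|v_x|\le|v|$. The remaining work—the sign-chasing in the representations \eqref{u-uxx}--\eqref{ux-uxxx} and the integration by parts producing the energy identity—is routine.
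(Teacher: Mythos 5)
Your proof is correct and follows essentially the same route as the paper's: sign persistence of $m$ via \eqref{mq}, the half-line representations \eqref{u-uxx}--\eqref{ux-uxxx} to control $u_x-u_{xxx}$ by $u-u_{xx}$, the conserved quantity \eqref{CL1} (recognized as $\|u-u_{xx}\|_{H^1}^2$) for the $L^\infty$ bound, and Theorem \ref{blow-up scenario} to exclude blow-up. The only difference is cosmetic: you derive the two-sided pointwise bound $|u_x-u_{xxx}|\le |u-u_{xx}|$, whereas the paper contents itself with the one-sided bound $(u_x-u_{xxx})\ge -(u-u_{xx})\ge -E_0$, which is all the blow-up criterion actually requires.
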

\begin{proof}
From \eqref{mq}, we known that $m(x,t)$ does not change sign either.
By Theorem \ref{blow-up scenario}, we only need to bound $u_{xxx}(x,t)$ from above. Now, we already 
observed that $u_x$ is always bounded thanks to the conservation law \eqref{CL1}. It follows that if we can bound  
$u_x-u_{xxx}$ from below, we will have our proof.
 
For $m_0\geq0$, by \eqref{u-uxx} and \eqref{ux-uxxx}, we obtain
\begin{eqnarray*}
(u_x-u_{xxx})(x,t)+(u-u_{xx})(x,t)=e^{x}\int_x^{+\infty}e^{-\xi}m(\xi,t)d\xi\geq0.
\end{eqnarray*}
It follows
\begin{eqnarray*}
&&(u_x-u_{xxx})(x,t)\geq-(u-u_{xx})(x,t)\\
&&\geq -\left(\int_{\R}u^2+3u_{x}^2+3u_{xx}^2+u_{xxx}^2dx\right)^\frac{1}{2}=-E_0.
\end{eqnarray*}
Similarly, for $m_0\leq0$, we have
\begin{eqnarray*}
(u_x-u_{xxx})(x,t)\geq(u-u_{xx})(x,t)\geq-E_0.
\end{eqnarray*}
The proof of Theorem \ref{GE1} is completed.
\end{proof}

\begin{theorem}\label{GE2}
Suppose that $u_0\in H^4(\R)$, and that 
there exists $x_0\in \R$ such that $m_0(x)\leq 0$ on $(-\infty,x_0]$ and $m_0(x)\geq 0$ on $[x_0,\infty)$. Then
the corresponding solution to the CH-type equation \eqref{CHtype} exists globally.
\end{theorem}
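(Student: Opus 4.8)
The plan is to reduce global existence, once more, to an upper bound on $u_{xxx}$: by the blow-up criterion of Theorem \ref{blow-up scenario} it suffices to show that $\sup_{x\in\R}u_{xxx}(x,t)$ stays finite on $[0,T)$. The conservation law \eqref{CL1} is in fact precisely $\|u-u_{xx}\|_{H^1}^2=E_0^2$ (integrating by parts, $\int_{\R}\big((u-u_{xx})^2+(u_x-u_{xxx})^2\big)\,dx$ equals the left-hand side of \eqref{CL1}), so the one-dimensional Sobolev embedding gives the uniform pointwise bound $\|u-u_{xx}\|_{L^\infty}\le E_0$, the same bound used in the proof of Theorem \ref{GE1}. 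Together with the already-established estimate $\|u_x\|_{L^\infty}\le E_0/\sqrt2$, it then remains only to bound $w:=u_x-u_{xxx}$ from below, since $u_{xxx}=u_x-w$ would then be bounded above. I would write $v:=u-u_{xx}$ throughout, so that \eqref{u-uxx} and \eqref{ux-uxxx} read as one-sided integrals of $m$ combining into $v\pm w$.

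The second step is to transport the sign hypothesis on $m_0$ along the flow \eqref{PT}. From the transport identity \eqref{mq}, $m(q(x,t),t)\,q_x^2=m_0(x)$ with $q_x>0$, so $m(q(x,t),t)$ carries the same sign as $m_0(x)$; since $x\mapsto q(x,t)$ is an increasing diffeomorphism, setting $\xi_0(t):=q(x_0,t)$ yields, for each $t\in[0,T)$, that $m(\cdot,t)\le0$ on $(-\infty,\xi_0(t)]$ and $m(\cdot,t)\ge0$ on $[\xi_0(t),\infty)$. In other words the single sign change of the datum persists, its transition point merely riding the characteristic through $x_0$.

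The final step is a case split at $\xi_0(t)$, using \eqref{u-uxx}--\eqref{ux-uxxx} in the combined form
\[
(v+w)(x,t)=e^{x}\int_x^{+\infty}e^{-\xi}m(\xi,t)\,d\xi,\qquad
(v-w)(x,t)=e^{-x}\int_{-\infty}^{x}e^{\xi}m(\xi,t)\,d\xi .
\]
If $x\ge\xi_0(t)$ then $m\ge0$ on $[x,\infty)$, so the first integral is nonnegative, giving $v+w\ge0$, hence $w\ge -v\ge -E_0$. If $x\le\xi_0(t)$ then $m\le0$ on $(-\infty,x]$, so the second integral is nonpositive, giving $v-w\le0$, hence $w\ge v\ge -E_0$. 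As these cases exhaust $\R$, we get $u_x-u_{xxx}=w\ge -E_0$ for all $x$ and all $t\in[0,T)$, so $u_{xxx}\le u_x+E_0\le E_0/\sqrt2+E_0$ is bounded above uniformly in time, and Theorem \ref{blow-up scenario} rules out blow-up.

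This argument is entirely parallel to the proof of Theorem \ref{GE1}, of which it is the one-sign-change refinement; the only genuinely new ingredient is the second step. The main point to verify carefully there is that the monotonicity of the flow map really preserves the ordering of the moving transition point $\xi_0(t)$ relative to an arbitrary fixed $x$, so that the one-sided integrals in the display above inherit a definite sign. No finer control of $m$ near $\xi_0(t)$ is required, which is exactly why the estimate closes with the same constant $E_0$ as in the non-sign-changing case.
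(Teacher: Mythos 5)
Your proof is correct and follows essentially the same route as the paper: transporting the single sign change of $m_0$ along the increasing flow map via \eqref{mq}, then splitting at $q(x_0,t)$ and using the one-sided integral representations \eqref{u-uxx}--\eqref{ux-uxxx} to get $u_x-u_{xxx}\ge -\|u-u_{xx}\|_{L^\infty}\ge -E_0$, which closes the argument through the blow-up criterion of Theorem \ref{blow-up scenario}. The only cosmetic difference is your $(v\pm w)$ notation for the combined representations, which the paper writes out explicitly.
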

\begin{proof}
Similarly to the argument in the proof of Theorem \ref{GE1}, we need to bound $u_x-u_{xxx}$ from below. From 
\eqref{mq}, we known that $m(x,t)\leq0$ on $(-\infty,q(x_0,t)]$ and $m(x,t)\geq0$ on $[q(x_0,t),\infty)$.
For the points  $x\in(-\infty,q(x_0,t)]$, we have
\begin{eqnarray*}
(u_x-u_{xxx})(x,t)=(u-u_{xx})(x,t)-e^{-x}\int_{-\infty}^{x}e^{\xi}m(\xi,t)d\xi\geq(u-u_{xx})(x,t).
\end{eqnarray*}
For the points $x\in[q(x_0,t),\infty)$, we have
\begin{eqnarray*}
(u_x-u_{xxx})(x,t)=-(u-u_{xx})(x,t)+e^{x}\int_x^{+\infty}e^{-\xi}m(\xi,t)d\xi\geq-(u-u_{xx})(x,t).
\end{eqnarray*}
This means that for any $x\in\R$ we have
\begin{eqnarray*}
(u_x-u_{xxx})(x,t)\geq-\|u-u_{xx}\|_{L^\infty}\geq-E_0,
\end{eqnarray*}
where we have used the conservation law \eqref{CL1} once again. We have 
completed the proof of Theorem \ref{GE2}.
\end{proof}

\begin{theorem}\label{BC1}
 Assume that $u_0\in H^4(\mathbb{R})$ and that there exists $x_0\in \R$ such that
\begin{eqnarray}\label{ibc}
(u_{0x}-u_{0xxx})(x_0)<-\frac{E_0}{\sqrt{2}}\; ,
 \end{eqnarray}
where $E_0$ is defined in \eqref{CL1}. Then, the corresponding solution $u(x,t)$ to CH type equation \eqref{CHtype} 
blows up at a finite time $T$ bounded by
\begin{eqnarray*}
T\leq\frac{1}{-\frac{1}{2}(u_{0x}-u_{0xxx})(x_0)+\frac{E_0^2}{2(u_{0x}-u_{0xxx})(x_0)}}\; .
 \end{eqnarray*}
\end{theorem}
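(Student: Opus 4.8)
The plan is to reduce Theorem \ref{BC1} to a scalar Riccati differential inequality along the characteristic flow \eqref{PT} and then invoke the blow-up criterion of Theorem \ref{blow-up scenario}. First I would set $v = u - u_{xx}$, so that $m = v - v_{xx} = (1-\partial_x^2)v$ and \eqref{CHtype} becomes \emph{exactly} the Camassa--Holm equation $m_t + v\,m_x + 2\,v_x\,m = 0$ written for $v$; correspondingly the trajectory \eqref{PT} is the CH characteristic $q_t = v(q,t)$, $q(x_0,0)=x_0$. Writing $g(x) = \tfrac12 e^{-|x|}$ for the Green's function of $(1-\partial_x^2)$ (so that $v = g*m$), the equation takes the nonlocal form $v_t + v v_x = -\partial_x\, g*(v^2 + \tfrac12 v_x^2)$. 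The quantity I would track is $M(t) := (u_x - u_{xxx})(q(x_0,t),t) = v_x(q(x_0,t),t)$, with $M(0)=M_0=(u_{0x}-u_{0xxx})(x_0)$, which by hypothesis \eqref{ibc} is strongly negative.

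Next I would differentiate the nonlocal equation in $x$ and use the identity $\partial_x^2\,g*f = g*f - f$ (which follows from $(1-\partial_x^2)g=\delta$, i.e.\ $\partial_x^2 g = g-\delta$) to obtain, along the flow,
\[
\frac{dM}{dt} = -\frac12 M^2 + \Big(v^2 - g*(v^2 + \tfrac12 v_x^2)\Big)\big(q(x_0,t),t\big)\; .
\]
The forcing term is controlled by the conserved energy. Indeed, a direct computation shows that the conservation law \eqref{CL1} is precisely $\|v(\cdot,t)\|_{H^1}^2 = \int_{\R}(v^2+v_x^2)\,dx = E_0^2$; together with the Sobolev embedding $\|v\|_{L^\infty}^2 \le \tfrac12\|v\|_{H^1}^2$ and the positivity $g*(v^2+\tfrac12 v_x^2)\ge 0$, this yields the uniform Riccati inequality $\frac{dM}{dt} \le -\frac12 M^2 + \frac12 E_0^2$. (The same estimates can be produced directly from the integral representations \eqref{u-uxx} and \eqref{ux-uxxx}, in the style used in Theorems \ref{GE1} and \ref{GE2}.)

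Finally I would run the ODE comparison. Since \eqref{ibc} places $M_0$ below the equilibrium of the comparison equation, $M(t)$ is strictly decreasing and stays there, so $M(t)^2 \ge M_0^2$ for all $t$; absorbing the constant forcing into the quadratic term gives $\frac{dM}{dt} \le -c\,M^2$ with the fixed constant $c = \tfrac12\big(1 - E_0^2/M_0^2\big)>0$. Integrating $d(1/M)/dt \ge c$ then produces finite-time blow-up $M(t)\to -\infty$ together with the explicit lifespan bound $T \le 1/(c\,|M_0|) = \big(-\tfrac12 M_0 + \tfrac{E_0^2}{2M_0}\big)^{-1}$, matching the statement. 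Because $\|u_x\|_{L^\infty}\le E_0/\sqrt2$ is bounded for all time, $M = u_x - u_{xxx}\to -\infty$ forces $u_{xxx}(q(x_0,t),t)\to +\infty$, hence $\sup_{x}u_{xxx}(\cdot,t)\to +\infty$, and Theorem \ref{blow-up scenario} then delivers the finite-time blow-up of the solution.

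The main obstacle is pinning down the constant in the estimate of the nonlocal forcing term $v^2 - g*(v^2+\tfrac12 v_x^2)$: the crude bound $\tfrac12 E_0^2$ used above is exactly what reproduces the stated lifespan formula, whereas the sharp pointwise bound of order $\tfrac14 E_0^2$ is what aligns the argument with the threshold $-E_0/\sqrt2$ appearing in \eqref{ibc}. Getting this numerical constant right, and then propagating the monotonicity inequality $M(t)^2\ge M_0^2$ cleanly through the comparison so as to recover the rational (rather than logarithmic) lifespan estimate, is where the genuine work lies; the rest is the bookkeeping of the characteristic flow and the translation back through Theorem \ref{blow-up scenario}.
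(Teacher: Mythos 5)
Your overall strategy coincides with the paper's: track $M(t)=(u_x-u_{xxx})(q(x_0,t),t)$ along the particle trajectory \eqref{PT}, derive a Riccati-type differential inequality, and convert $M(t)\to-\infty$ into blow-up via the boundedness of $u_x$ and Theorem \ref{blow-up scenario}. Your route to the key identity (writing the equation as CH for $v=u-u_{xx}$ in the nonlocal form $v_t+vv_x=-\partial_x\, g*(v^2+\tfrac12 v_x^2)$ with $g=\tfrac12 e^{-|x|}$, then differentiating in $x$ and using $\partial_x^2 g*f=g*f-f$) is a legitimate repackaging of the paper's computation with the quantities $I(t)$ and $II(t)$: your identity $\frac{dM}{dt}=-\tfrac12 M^2+\bigl(v^2-g*(v^2+\tfrac12 v_x^2)\bigr)$ is exactly the paper's \eqref{ddt}. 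The identification of \eqref{CL1} with $\|v\|_{H^1}^2=E_0^2$ and the embedding $\|v\|_{L^\infty}^2\le\tfrac12 E_0^2$ are also correct.

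The genuine gap is the step where you discard the convolution term using only $g*(v^2+\tfrac12 v_x^2)\ge 0$. This produces $\frac{dM}{dt}\le-\tfrac12 M^2+\tfrac12 E_0^2$, whose negative equilibrium is $-E_0$, not $-E_0/\sqrt2$. The hypothesis \eqref{ibc} only gives $M_0<-E_0/\sqrt2$; in the regime $-E_0\le M_0<-E_0/\sqrt2$ your comparison equation allows $M'>0$, so your assertion that \eqref{ibc} ``places $M_0$ below the equilibrium'' is false there, the monotonicity claim $M(t)^2\ge M_0^2$ fails, and no blow-up follows. The missing ingredient --- which you mention in your last paragraph but defer, even though it is the crux --- is the pointwise lower bound $g*(v^2+\tfrac12 v_x^2)\ge\tfrac12 v^2$, obtained as in the paper from $\int_{-\infty}^{q}e^{x}(v^2+v_x^2)\,dx\ge 2\int_{-\infty}^{q}e^{x}vv_x\,dx=e^{q}v^2(q)-\int_{-\infty}^{q}e^{x}v^2\,dx$ together with its mirror image on $[q,\infty)$. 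With it, the forcing improves to $\tfrac12 v^2\le\tfrac14 E_0^2$, the equilibrium becomes $-E_0/\sqrt2$, and your comparison argument then runs verbatim, yielding $T\le\bigl(-\tfrac12 M_0+\tfrac{E_0^2}{4M_0}\bigr)^{-1}$. Finally, do not take the coincidence that your cruder constant ``reproduces'' the printed lifespan formula as validation: the printed denominator $-\tfrac12 M_0+\tfrac{E_0^2}{2M_0}$ is negative whenever $-E_0<M_0<-E_0/\sqrt2$, so it cannot bound the positive time $T$ in exactly the regime at issue; what the sharp Riccati inequality actually delivers is the bound with $\tfrac{E_0^2}{4M_0}$, and the factor $2$ in the statement appears to be a slip in the paper rather than a target your estimate should aim for.
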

\begin{remark}
Due to the fact that $u_{0x}$ is bounded by the conservation law $E_0$ with %as
$\|u_{0x}\|_{L^\infty}\leq \frac{\|u_{0x}\|_{H^1}}{\sqrt{2}}\leq \frac{E_0}{\sqrt{2}}$, %. So
the condition \eqref{ibc} holds true %can been
only if %added on $u_{0xxx}$ as
$u_{0xxx}(x_0)>\sqrt{2}E_0$.
\end{remark}
\begin{proof}
Let
\begin{eqnarray*}\label{It}
I(t)=\frac{1}{2}e^{-q(x_0,t)}\int_{-\infty}^{q(x_0,t)}e^{x}m(x,t)dx
\end{eqnarray*}
and
\begin{eqnarray*}\label{IIt}
II(t)=\frac{1}{2}e^{q(x_0,t)}\int_{q(x_0,t)}^{+\infty}e^{-x}m(x,t)dx.
\end{eqnarray*}
From \eqref{u-uxx} and \eqref{ux-uxxx}, we have
\begin{eqnarray*}
(u-u_{xx})(q(x_0,t),t)=I(t)+II(t)
\end{eqnarray*}
and
\begin{eqnarray*}
(u_x-u_{xxx})(q(x_0,t),t)=-I(t)+II(t).
\end{eqnarray*}
Differentiating $(u_x-u_{xxx})(q(x_0,t),t)$ with respect to $t$,
\begin{eqnarray}\label{dt}
&&\frac{d}{dt}(u_x-u_{xxx})(q(x_0,t),t)=-\frac{d}{dt}I(t)+\frac{d}{dt}II(t).
\end{eqnarray}
Then, we estimate $\frac{d}{dt}I(t)$.
\begin{eqnarray*}
\frac{d}{dt}I(t)&=&\frac{1}{2}(u-u_{xx})m(q(x_0,t),t)-\frac{1}{2}(u-u_{xx})e^{-q(x_0,t)}\int_{-\infty}^{q(x_0,t)}e^{x}m(x,t)dx\nonumber\\
&&+\frac{1}{2}e^{-q(x_0,t)}\int_{-\infty}^{q(x_0,t)}e^{x}m_t(x,t)dx.
\end{eqnarray*}
The third term in the right hand side can been estimated as
\begin{eqnarray*}
&&\frac{1}{2}e^{-q(x_0,t)}\int_{-\infty}^{q(x_0,t)}e^{x}m_t(x,t)dx\nonumber\\
&=&-\frac{1}{2}e^{-q(x_0,t)}\int_{-\infty}^{q(x_0,t)}e^{x}((u-u_{xx})m_x+2(u-u_{xx})_xm)(x,t)dx\nonumber\\
&=&-\frac{1}{2}e^{-q(x_0,t)}\int_{-\infty}^{q(x_0,t)}e^{x}(((u-u_{xx})m)_x+(u-u_{xx})_xm)(x,t)dx\nonumber\\
&=&-\frac{1}{2}(u-u_{xx})m(q(x_0,t),t)+\frac{1}{2}e^{-q(x_0,t)}\int_{-\infty}^{q(x_0,t)}e^{x}(((u-u_{xx})m)-(u-u_{xx})_xm)(x,t)dx\nonumber\\
&=&-\frac{1}{2}(u-u_{xx})m(q(x_0,t),t)+\frac{1}{2}e^{-q(x_0,t)}\int_{-\infty}^{q(x_0,t)}e^{x}\left((u-u_{xx})^2-(u-u_{xx})(u-u_{xx})_{xx}\right.\nonumber\\
&&\left.-(u-u_{xx})_x(u-u_{xx})+(u-u_{xx})_x(u-u_{xx})_{xx}\right)dx\nonumber\\
&=&-\frac{1}{2}(u-u_{xx})m(q(x_0,t),t)-\frac{1}{2}(u-u_{xx})(u-u_{xx})_{x}+\frac{1}{4}(u-u_{xx})_{x}^2\nonumber\\
&&+\frac{1}{2}e^{-q(x_0,t)}\int_{-\infty}^{q(x_0,t)}e^{x}\left((u-u_{xx})^2+\frac{1}{2}(u-u_{xx})_{x}^2\right)dx.
\end{eqnarray*}
It follows that
\begin{eqnarray*}
\frac{d}{dt}I(t)&=&-\frac{1}{2}(u-u_{xx})e^{-q(x_0,t)}\int_{-\infty}^{q(x_0,t)}e^{x}m(x,t)dx
-\frac{1}{2}(u-u_{xx})(u-u_{xx})_{x}+\frac{1}{4}(u-u_{xx})_{x}^2\nonumber\\
&&+\frac{1}{2}e^{-q(x_0,t)}\int_{-\infty}^{q(x_0,t)}e^{x}\left((u-u_{xx})^2+\frac{1}{2}(u-u_{xx})_{x}^2\right)dx\nonumber\\
&=&-\frac{1}{2}(u-u_{xx})^2+\frac{1}{4}(u-u_{xx})_{x}^2+\frac{1}{2}e^{-q(x_0,t)}\int_{-\infty}^{q(x_0,t)}e^{x}\left((u-u_{xx})^2+\frac{1}{2}(u-u_{xx})_{x}^2\right)dx.
\end{eqnarray*}
Note that
\begin{eqnarray*}
&&\int_{-\infty}^{q(x_0,t)}e^{x}\left((u-u_{xx})^2+(u-u_{xx})_{x}^2\right)dx\\
&\geq& 2\int_{-\infty}^{q(x_0,t)}e^{x}(u-u_{xx})(u-u_{xx})_{x}dx\\
&\geq& e^{q(x_0,t)}(u-u_{xx})^2(q(x_0,t),t)-\int_{-\infty}^{q(x_0,t)}e^{x}(u-u_{xx})^2dx,
\end{eqnarray*}
which yields that
\begin{eqnarray*}
\int_{-\infty}^{q(x_0,t)}e^{x}\left((u-u_{xx})^2+\frac{1}{2}(u-u_{xx})_{x}^2\right)dx\geq \frac{1}{2}e^{q(x_0,t)}(u-u_{xx})^2(q(x_0,t),t).
\end{eqnarray*}
Therefore,
\begin{eqnarray}\label{dtI}
\frac{d}{dt}I(t)\geq -\frac{1}{4}(u-u_{xx})^2(q(x_0,t),t)+\frac{1}{4}(u-u_{xx})_{x}^2(q(x_0,t),t).
\end{eqnarray}
Similarity,
\begin{eqnarray*}
\frac{d}{dt}II(t)&=&-\frac{1}{2}(u-u_{xx})m(q(x_0,t),t)+\frac{1}{2}(u-u_{xx})e^{q(x_0,t)}\int_{q(x_0,t)}^{+\infty}e^{-x}m(x,t)dx\nonumber\\
&&+\frac{1}{2}e^{q(x_0,t)}\int_{q(x_0,t)}^{+\infty}e^{-x}m_t(x,t)dx.
\end{eqnarray*}
The third term in the right hand side can been estimated as
\begin{eqnarray*}
&&\frac{1}{2}e^{q(x_0,t)}\int_{q(x_0,t)}^{+\infty}e^{-x}m_t(x,t)dx\\
&=&\frac{1}{2}(u-u_{xx})m(q(x_0,t),t)-\frac{1}{2}(u-u_{xx})(u-u_{xx})_{x}-\frac{1}{4}(u-u_{xx})_{x}^2\nonumber\\
&&-\frac{1}{2}e^{q(x_0,t)}\int_{q(x_0,t)}^{\infty}e^{-x}\left((u-u_{xx})^2+\frac{1}{2}(u-u_{xx})_{x}^2\right)dx.
\end{eqnarray*}
It follows that
\begin{eqnarray*}
\frac{d}{dt}II(t)&=&\frac{1}{2}(u-u_{xx})e^{q(x_0,t)}\int_{q(x_0,t)}^{+\infty}e^{-x}m(x,t)dx\nonumber\\
&&-\frac{1}{2}(u-u_{xx})(u-u_{xx})_{x}-\frac{1}{4}(u-u_{xx})_{x}^2\nonumber\\
&&-\frac{1}{2}e^{q(x_0,t)}\int_{q(x_0,t)}^{\infty}e^{-x}\left((u-u_{xx})^2+\frac{1}{2}(u-u_{xx})_{x}^2\right)dx.
\end{eqnarray*}
By same argument, we have
\begin{eqnarray*}
\int_{q(x_0,t)}^{\infty}e^{-x}\left((u-u_{xx})^2+\frac{1}{2}(u-u_{xx})_{x}^2\right)dx\geq \frac{1}{2}e^{-q(x_0,t)}(u-u_{xx})^2(q(x_0,t),t).
\end{eqnarray*}
Therefore
\begin{eqnarray}\label{dtII}
\frac{d}{dt}II(t)\leq\frac{1}{4}(u-u_{xx})^2(q(x_0,t),t)-\frac{1}{4}(u-u_{xx})_{x}^2(q(x_0,t),t).
\end{eqnarray}
Inserting \eqref{dtI} and \eqref{dtII} into \eqref{dt}, we have
\begin{eqnarray}\label{ddt}
&&\frac{d}{dt}(u_x-u_{xxx})(q(x_0,t),t)\nonumber\\
&=&(u-u_{xx})^2(q(x_0,t),t)-\frac{1}{2}(u-u_{xx})_x^2(q(x_0,t),t)\nonumber\\
&&-\frac{1}{2}e^{-q(x_0,t)}\int_{-\infty}^{q(x_0,t)}e^{x}\left((u-u_{xx})^2+\frac{1}{2}(u-u_{xx})_{x}^2\right)dx
\nonumber\\
&&-\frac{1}{2}e^{q(x_0,t)}\int_{q(x_0,t)}^{\infty}e^{-x}\left((u-u_{xx})^2+\frac{1}{2}(u-u_{xx})_{x}^2\right)dx.
\end{eqnarray}
Combining the above estimates into \eqref{ddt}, we obtain
\begin{eqnarray}\label{dddt}
\frac{d}{dt}(u_x-u_{xxx})(q(x_0,t),t)\leq\frac{1}{2}(u-u_{xx})^2(q(x_0,t),t)-\frac{1}{2}(uu_{xx})_x^2(q(x_0,t),t).
\end{eqnarray}
By the fact $\|f\|_{L^\infty}^2<\frac{1}{2}\|f\|_{H^1}^2$, we have
\begin{eqnarray*}
\|u-u_{xx}\|_{L^\infty}^2<\frac{1}{2}\|u-u_{xx}\|_{H^1}^2=\frac{1}{2}E_0^2.
\end{eqnarray*}
Let $\varphi(t)=(u_x-u_{xxx})(q(x_0,t),t)$, we can rewrite \eqref{dddt} as
\begin{eqnarray*}
\varphi'(t)\leq-\frac{1}{2}\varphi^2(t)+\frac{1}{4}E_0^2.
\end{eqnarray*}
By using the hypothesis of the theorem and a standard
argument on Riccati type equations, we have that there exists a time $T$ such that
\begin{eqnarray*}
\lim_{t\rightarrow T}\varphi(t)=-\infty.
\end{eqnarray*}
By the boundedness of $u_x$, we have that $u_{xxx}$ goes to infinity. 
This completes the proof of Theorem \ref{BC1}. 
\end{proof}

\begin{theorem}\label{BC2}
 Assume that $u_0\in H^4(\mathbb{R})$ and that there exists $x_0\in \R$ such that
 $m_0(x_0)=(1-\partial_x^2)^2u_0(x_0)=0$,
\begin{eqnarray}\label{bc2}
\int_{-\infty}^{x_0}e^x m_0(x,t)dx>0\quad \mathrm{and}\quad \int_{x_0}^{\infty }e^{-x}m_0(x,t)dx<0,
\end{eqnarray}
then the corresponding solution $u(x,t)$ to CH type equation \eqref{CHtype} blows up in finite time.
\end{theorem}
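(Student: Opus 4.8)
The plan is to run the Riccati argument of Theorem~\ref{BC1} along the characteristic through $x_0$, but to replace the pointwise hypothesis \eqref{ibc} by a monotonicity argument extracted from the sign conditions \eqref{bc2}. Write $w=u-u_{xx}$, keep the trajectory $q(x,t)$ from \eqref{PT}, and reuse the quantities $I(t)$ and $II(t)$ of the proof of Theorem~\ref{BC1}, so that $w(q(x_0,t),t)=I(t)+II(t)$ and $(u_x-u_{xxx})(q(x_0,t),t)=II(t)-I(t)=:\varphi(t)$. First I would note that, since $m_0(x_0)=0$, the transport identity \eqref{mq} forces $m(q(x_0,t),t)\equiv 0$ as long as the solution exists (recall that $q_x>0$ before blow-up). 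The hypotheses \eqref{bc2} translate directly into $I(0)>0$ and $II(0)<0$.

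The heart of the argument is a monotonicity statement for the weighted integrals
$$\alpha(t)=\int_{-\infty}^{q(x_0,t)}e^{x}m(x,t)\,dx=2e^{q(x_0,t)}I(t),\qquad \beta(t)=\int_{q(x_0,t)}^{\infty}e^{-x}m(x,t)\,dx=2e^{-q(x_0,t)}II(t).$$
Differentiating in $t$, using $m_t=-(wm)_x-w_xm$ from \eqref{CHtype}, integrating by parts, and discarding the boundary terms at $\pm\infty$ (legitimate for $u\in H^4$, hence $m\in L^2$), the endpoint terms at $q$ vanish because $m(q(x_0,t),t)=0$. Inserting the representations $w-w_x=e^{-x}\int_{-\infty}^{x}e^{\xi}m\,d\xi$ and $w+w_x=e^{x}\int_{x}^{\infty}e^{-\xi}m\,d\xi$ coming from \eqref{u-uxx}--\eqref{ux-uxxx}, the derivatives collapse into sign-definite perfect squares,
$$\frac{d\alpha}{dt}=\tfrac12 e^{-q(x_0,t)}\alpha^2+\tfrac12\int_{-\infty}^{q(x_0,t)}e^{-x}\Big(\int_{-\infty}^{x}e^{\xi}m\,d\xi\Big)^{2}dx\ \ge\ 0,$$
$$\frac{d\beta}{dt}=-\tfrac12 e^{q(x_0,t)}\beta^2-\tfrac12\int_{q(x_0,t)}^{\infty}e^{x}\Big(\int_{x}^{\infty}e^{-\xi}m\,d\xi\Big)^{2}dx\ \le\ 0.$$
Hence $\alpha(t)\ge\alpha(0)>0$ and $\beta(t)\le\beta(0)<0$ for all $t$, so the product $I(t)II(t)=\tfrac14\alpha(t)\beta(t)\le\tfrac14\alpha(0)\beta(0)<0$ stays bounded away from $0$.

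Finally I would feed this into the exact identity \eqref{ddt}, which after bounding its two integral terms from below yields \eqref{dddt}, namely $\varphi'(t)\le \tfrac12 w(q(x_0,t),t)^2-\tfrac12\varphi(t)^2$. Since $w(q)^2-\varphi^2=(I+II)^2-(II-I)^2=4\,I\,II$, this reads $\varphi'(t)\le 2\,I(t)II(t)\le \tfrac12\alpha(0)\beta(0)<0$, so $\varphi$ decreases at least linearly and reaches $\varphi(t_1)<-E_0/\sqrt2$ at some finite $t_1$, with $E_0$ as in \eqref{CL1}. From $t_1$ on I would invoke the Riccati inequality of Theorem~\ref{BC1}: using $w(q)^2\le\tfrac12 E_0^2$ gives $\varphi'\le \tfrac14 E_0^2-\tfrac12\varphi^2$, and once $\varphi^2>E_0^2$ one has $\varphi'\le-\tfrac14\varphi^2$, forcing $\varphi\to-\infty$ in finite time; since $u_x$ is bounded by \eqref{CL1}, $u_{xxx}=u_x-\varphi\to+\infty$, and Theorem~\ref{blow-up scenario} gives finite-time blow-up. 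The genuinely new input, and the step I expect to be the main obstacle, is the monotonicity of $\alpha$ and $\beta$: one must check that the integration by parts produces exactly the perfect squares above and that all endpoint contributions vanish, so that $I\,II$ remains negative and bounded away from zero. This is what upgrades the \emph{conditional} Riccati estimate of Theorem~\ref{BC1} into an \emph{unconditional} finite-time blow-up under \eqref{bc2}.
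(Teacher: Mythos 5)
Your proposal is correct, and I have checked its central computation: with $F(x)=\int_{-\infty}^{x}e^{\xi}m\,d\xi$ one has $m=e^{-x}F'$, so $\frac{d\alpha}{dt}=\int_{-\infty}^{q}e^{x}(w-w_x)m\,dx=\tfrac12\int_{-\infty}^{q}e^{-x}(F^2)'\,dx$, and integration by parts (endpoint at $q$ killed by $m(q(x_0,t),t)=0$, endpoint at $-\infty$ killed since $|F(x)|\le e^{x}\|m\|_{L^2}/\sqrt2$) gives exactly your perfect-square identity; the computation for $\beta$ is symmetric. However, your route differs from the paper's in both key technical steps, even though the skeleton (the identity \eqref{mq}, the quantities $I$, $II$, and the Riccati-type inequality \eqref{dddt}, whose right-hand side equals $2I\,II$) is the same. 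To propagate the sign condition $I\,II<0$, the paper does \emph{not} compute $\dot\alpha$, $\dot\beta$ directly: it re-uses the inequalities \eqref{dtI}--\eqref{dtII} from Theorem \ref{BC1} (equivalent to $I'\ge -I\,II$, $II'\le I\,II$, whose right-hand sides are sign-definite only while $I>0>II$) and runs a continuity/contradiction bootstrap; it then derives the second-order inequality \eqref{d3} for $\Psi(t)=\int_0^t[(u-u_{xx})_x^2-(u-u_{xx})^2]\,ds+2(u_x-u_{xxx})(x_0,0)$ and concludes with Zhou's Lemma \ref{TCD}. Your monotonicity of $\alpha=2e^{q}I$ and $\beta=2e^{-q}II$ is \emph{unconditional} (the transport term is absorbed into the weight, leaving manifestly nonnegative right-hand sides), so no bootstrap is needed, and your conclusion replaces Lemma \ref{TCD} by the elementary chain ``uniform bound $\varphi'\le\tfrac12\alpha(0)\beta(0)<0$, hence linear decrease past any threshold, then the Riccati comparison of Theorem \ref{BC1}.'' What each buys: your argument is self-contained, gives an explicit blow-up time estimate in terms of $\alpha(0)\beta(0)$, $\varphi(0)$, $E_0$, and sidesteps the sign slips in the paper's \eqref{d2}--\eqref{d3} (as written, $\Psi(0)=2(u_x-u_{xxx})(x_0,0)<0$, so Lemma \ref{TCD} applies only after correcting the sign); the paper's argument stays within the standard toolkit of the CH blow-up literature. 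One small point to tighten: reaching $\varphi(t_1)<-E_0/\sqrt2$ does not yet give $\varphi'\le-\tfrac14\varphi^2$ (that needs $\varphi^2>E_0^2$), but since your bound $\varphi'\le\tfrac12\alpha(0)\beta(0)<0$ holds on the whole lifespan, $\varphi$ crosses $-E_0$ in finite time anyway, so the argument is intact.
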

\begin{proof}
Thanks to \eqref{mq}, we obtain $m(q(x_0,t),t)=0$ for all $t$ in its lifespan. Inequality \eqref{dddt} is also
correct in this proof. The initial condition \eqref{bc2} means that $(u_{0}-u_{0xx})_x(q(x_0,t),t)<0$ and
$(u_{0}-u_{0xx})_x^2(x_0,t)>(u_{0}-u_{0xx})^2(q(x_0,t),t)$. We claim that $(u_{0}-u_{0xx})_x(q(x_0,t),t)$ is
decreasing, $(u-u_{xx})_x^2(q(x_0,t),t)>(u-u_{xx})^2(q(x_0,t),t)$ for all $t\geq 0$. Suppose not, i.e. there
exists a $t_0$ such that $(u-u_{xx})_x^2(q(x_0,t),t)>(u-u_{xx})^2(q(x_0,t),t)$ on $[0,t)$ and
$(u-u_{xx})_x^2(q(x_0,t_0),t_0)\leq(u-u_{xx})^2(q(x_0,t_0),t_0)$.
Recall \eqref{dtI} and \eqref{dtII}, we get on $[0,t_0)$
\begin{eqnarray*}
\frac{d}{dt}I(t)\geq -\frac{1}{4}(u-u_{xx})^2(q(x_0,t),t)+\frac{1}{4}(u-u_{xx})_{x}^2(q(x_0,t),t)\geq 0
\end{eqnarray*}
and
\begin{eqnarray*}
\frac{d}{dt}II(t)\leq\frac{1}{4}(u-u_{xx})^2(q(x_0,t),t)-\frac{1}{4}(u-u_{xx})_{x}^2(q(x_0,t),t)\leq 0.
\end{eqnarray*}
It follows from the continuity property of ODEs that
\begin{eqnarray*}
(u-u_{xx})^2(q(x_0,t),t)-(u-u_{xx})_{x}^2(q(x_0,t),t)=4I(t)II(t)<4I(0)II(0)<0,
\end{eqnarray*}
for all $t>0$, this implies that $t_0$ can be extended to the infinity. This is a contradiction. So the claim is
true. By using \eqref{dtI} and \eqref{dtII} again, we get
\begin{eqnarray}\label{d1}
&&\frac{d}{dt}[(u-u_{xx})_x^2-(u-u_{xx})^2](q(x_0,t),t)\nonumber\\
&=&-\frac{d}{dt}\left\{\int_{-\infty}^{q(x_0,t)}e^{x}m(x,t)dx\times\int_{q(x_0,t)}^{+\infty}e^{-x}m(x,t)dx\right\}
\nonumber\\
&=&-4\frac{d}{dt}[I(t)II(t)]\nonumber\\
&=&-4\frac{d}{dt}I(t)\times II(t)-4\frac{d}{dt}II(t)\times I(t)\nonumber\\
&\geq&-[(u-u_{xx})_x^2-(u-u_{xx})^2](q(x_0,t),t)II(t)+[(u-u_{xx})_x^2-(u-u_{xx})^2](q(x_0,t),t)I(t)\nonumber\\
&=&-(u-u_{xx})_x(q(x_0,t),t)[(u-u_{xx})_x^2-(u-u_{xx})^2](q(x_0,t),t),
\end{eqnarray}
where we have used $(u-u_{xx})_x(q(x_0,t),t)=-I(t)+II(t)$.
Recall \eqref{dddt}, it follows
\begin{eqnarray}\label{d2}
&&(u_x-u_{xxx})(q(x_0,t),t)\nonumber\\
&\leq& \int_0^t\frac{1}{2}[(u-u_{xx})^2(q(x_0,s),s)-(u-u_{xx})_x^2(q(x_0,s),s)]ds-(u_x-u_{xxx})(x_0,0).\nonumber\\
\end{eqnarray}
Substituting \eqref{d2} into \eqref{d1}, it yields
\begin{eqnarray}\label{d3}
&&\lefteqn{\frac{d}{dt}[(u-u_{xx})_x^2-(u-u_{xx})^2](q(x_0,t),t)\geq\frac{1}{2}[(u-u_{xx})_x^2-(u-u_{xx})^2]
(q(x_0,t),t)}\nonumber\\
&&\times\left\{\int_0^t[(u-u_{xx})_x^2-(u-u_{xx})^2](q(x_0,s),s)ds+2(u_x-u_{xxx})(x_0,0)\right\}.\nonumber\\
\end{eqnarray}
Before completing the proof, we need the following technical lemma.

\smallskip

\begin{lemma}\emph{\cite{Zhou}}\label{TCD}
Suppose that $\Psi(t)$ is twice continuously differentiable satisfying
\begin{eqnarray}\label{d4}
\left\{
         \begin{array}{ll}
         \Psi''(t)\geq C_0 \Psi'(t)\Psi(t),\quad t>0,\quad C_0>0,\\
         \Psi(t)>0,\quad\Psi'(t)>0.
         \end{array}
       \right.
\end{eqnarray}
Then $\Psi(t)$ blows up in finite time. Moreover, the blow up time can be estimated in terms of the initial datum
as
\begin{eqnarray*}
T\leq \max\left\{\frac{2}{C_0\Phi(0),\frac{\Phi(0)}{\Phi'(0)}}\right\}.
\end{eqnarray*}
\end{lemma}

\smallskip

Let $\Psi(t)=\int_0^t[(u-u_{xx})_x^2-(u-u_{xx})^2](q(x_0,s),s)ds+2(u_x-u_{xxx})(x_0,0)$, then \eqref{d3} is an
equation of type \eqref{d4} with $C_0=\frac{1}{2}$. The proof is completed by applying Lemma \ref{TCD} 
and the boundedness of $u_x$.
\end{proof}

\section{Final Remarks}
{\color{black}
In this final section we collect three different remarks: first, we introduce some $(2n+1)th$ order CH-type equations, $n \geq 1$; second, we discuss the relation of these equations with the geometry of the diffeomorphism
group $Diff(S^1)$; third, we reconnect our class of equations with the geometry of pseudo-spherical surfaces.
}
\subsection{Higher order Camassa-Holm type equations}
In this subsection we proceed as in Section 2. We consider differential operators $A_{2n}$ of order $2n$ and 
define $m = A_{2n}(u)$. Specifically, we choose the operators
\begin{eqnarray}
A_{2n} & = & (-1)^{n+1} \partial_x^{2n} + 2 \sum_{k=1}^{n-1}
(-1)^{n+1-k}\, \partial_x^{2(n-k)} - 1 \; , \label{ion}\\
B_{2n} & = & \sum_{k=0}^{n-1} (-1)^{n-k}\, \partial_x^{2(n-k)-1}\; , \nonumber \\
C_{2n} & = & \sum_{k=0}^{n-1} (-1)^{n-k}\, \partial_x^{2(n-k -1)} \; . \nonumber
\end{eqnarray}
We consider the matrices
\begin{equation} \label{xchn}
X_{2n} = \left[ \begin {array}{cc} 0& \frac{1}{2}\,\lambda +
A_{2n}(u) \\ \noalign{\medskip}\frac{1}{2}\,{\lambda}^{-1}&0\end
{array} \right]
\end{equation}
and
\begin{equation} \label{tchn}
T_{2n} = \left[ \begin {array}{cc} \frac{1}{2}\, B_{2n}(u) &
C_{2n}(u) \, A_{2n}(u) - \frac{1}{2}\, \lambda\, C_{2n}(u) -
\frac{1}{2}\, \lambda^2
\\\noalign{\medskip} \displaystyle -\frac{1}{2} + \frac{1}{2 \lambda}\, C_{2n}
(u) & -\frac{1}{2}\, B_{2n}(u) \end {array} \right] \; .
\end{equation}

A straightforward computation allows us to check that the equation
$$X_{2n,t} - T_{2n,x} + [X_{2n} , T_{2n}] = 0$$ is equivalent to the
{\em $(2n+1)$-order equation of Camassa-Holm type}
\begin{gather}
A_{2n,t}(u)  - 2\, C_{2n}(u)_x\,A_{2n}(u) - C_{2n}(u)\,A_{2n}(u)_x = 0 \; . \label{ch2n}
\end{gather}
It follows from our discussions in Section 1 and \cite{reyes:ChT} that these equations all describe 
pseudo-spherical surfaces. 
We compute quadratic pseudo-potentials and conservation laws as in Section 2: we obtain the Riccati equation
\begin{equation} \label{gcl}
{\frac {\partial \Gamma}{\partial x}}
 = \frac{1}{2}\,\lambda + A_{2n}(u) - {\frac {\Gamma^2}{2\,\lambda}} \; ,
\end{equation}
a corresponding equation for $\Gamma_t$, and the conserved density
$- \Gamma/\lambda$. It follows by expanding $\Gamma$ as in
(\ref{reyes:cd-1})--(\ref{reyes:cd2}) that Equation (\ref{ch2n})
is integrable. We will consider its pseudo-peakon solutions and Cauchy problem elsewhere. 

\subsection{Camassa-Holm type equations and $\mathbf{Diff(S^1)}$}

{\color{black}
In this subsection we connect the (periodic case of the) Camassa-Holm type equations with the geometry of
$Diff(S^1)$, the Fr\'echet Lie group of diffeomorphisms of the circle. We recall some basic facts on the geometry
of this group (some of them already mentioned in Section 1) following the exposition appearing in \cite{GPoR}:

We set $G = Diff(S^1)$ and we write its Lie algebra as $\mathfrak{g}= Vect(S^1)$, see \cite{KW}. We also denote by
$\mathfrak{g}'$ the dual of $\mathfrak{g}$, and
by $\langle\  \ ,\ \rangle : \mathfrak{g} \times \mathfrak{g}' \to \mathbb{R}$
the pairing that induces such a duality.
Given a linear map $A : \mathfrak{g} \to \mathfrak{g}'$ we define the $\mathbb{R}$-bilinear
mapping $(\ \cdot\ \vert\ \cdot\ )_{A} : \mathfrak{g} \times \mathfrak{g} \to \mathbb{R}$ as
$ (\ X\ \vert\ Y\ )_{A} = \langle\ X, A Y\ \rangle $
whenever $X$ and $Y$ are in $\mathfrak{g}$. If such a bilinear
map is symmetric and non-degenerate, we say that $A$ is an {\em inertia operator}. In this case,
we define an adjoint representation with respect to $A$ by
\begin{equation} \label{acc}
(\ \text{ad}(X) Y\ \vert\ Z\ )_{A} = - (\ Y\ \vert\ \text{ad}_{A}(X) Z\ )_{A}
\end{equation}
for all $X, Y, Z$ in $\mathfrak{g}$.

Let us fix an inertia operator $A : \mathfrak{g} \to \mathfrak{g}'$. This operator induces a (pseudo-)Riemannian
metric on $G$: we let $r_{\gamma}$ be the right translation by
$\gamma \in G$, and we denote by $r_{\gamma \ast} : T_{\sigma} G \to T_{\sigma (\gamma)} G$ the induced map
on the tangent bundle. We define the (pseudo-)Riemannian metric induced by $A$ as
\begin{equation} \label{alfa}
(\ \xi\ \vert\ \tau\ )_A(\gamma) =
(\ r_{\gamma^{-1} \ast} \xi\ \vert\ r_{\gamma^{-1} \ast} \tau\ )_{A}
\end{equation}
for all $\tau, \xi \in T_\gamma G$. Now let us consider a smooth curve $\{\ \gamma(t)\ \vert\ t \in T\ \}$ in $G$,
where $T$ is an open interval in $\mathbb{R}$, and let $\dot{\gamma}(t) \in T_{\gamma(t)} G$, $t \in T$, be its
velocity vector. Then, $r_{\gamma(t)^{-1} \ast} \dot{\gamma}(t) = X(t)$ is in $\mathfrak{g}$, and we get a curve
$\{\ X(t)\ \vert\ t \in T\ \}$ in $\mathfrak{g}$. The {\em Euler equation} for $X(t)$ is
\begin{equation}   \label{euler}
  \frac{d}{dt} X(t)  = - \text{ad}_{A}(X(t)) X(t) \; .
\end{equation}
Euler's equation determines geodesics on $G$ with respect to the (pseudo-)Riemannian metric (\ref{alfa}), see
\cite{A} and also \cite{C_fourier,M}: if $X(t)$ solves (\ref{euler}), then the curve $\gamma(t)$ determined by
$r_{\gamma(t)^{-1} \ast} \dot{\gamma}(t) = X(t)$ is a geodesic on $G$.

{\color{black} Let $x$ in} $[0, 2 \pi [$ be the standard coordinate in $S^{1}$.
Every smooth vector field on $S^{1}$ can be written as $ X(x) \partial_{x}$, where
$X : S^{1} \to \mathbb{R}$ is a smooth function. The Lie bracket
between $X = X(x) \partial_{x}$ and $Y = Y(x) \partial_{x}$
is given by $[ X , Y ] = ( X\, Y_x - X_x\, Y)(x)\, \partial_{x}$.
We complexify $\mathfrak{g} = \text{Vect}(S^{1})$, that is we set
$$ \text{Vect}(S^{1})_{\mathbb{C}} = \mathfrak{g} \oplus i\,\mathfrak{g} =
\text{Vect}(S^{1}) \oplus i\, \text{Vect}(S^{1}) \ ,  $$
where $i = \sqrt{-1}$. Thus if $z = e^{i\,x}$, then
$ \{\ l_{n} = z^{n} \partial_{x} \ \vert\ n \in \mathbb{Z} \ \}$ is a basis for
$\text{Vect}(S^{1})_{\mathbb{C}}$,
i.e. for every {\it complex} vector field of the form $ X(x) \partial_{x}$  we have a Fourier
decomposition
$$  X(x) = \sum_{n \in \mathbb{Z}} X_{n} z^{n}\partial_x\; , X_n \in \mathbb{C} \; .  $$
We note that if we set $L_{n} = i l_{n}$, the collection $\{\ L_{n}\ \vert\ n \in \mathbb{Z} \ \}$
is also a basis for $\text{Vect}(S^{1})_{\mathbb{C}}$ and that if we extend the Lie bracket
linearly to $\text{Vect}(S^{1})_{\mathbb{C}}$  we have the
relations $[L_{m} , L_{n}] = (m-n) L_{m+n}$, $m,n \in \mathbb{Z}$.

{\color{black} There is} a non-degenerate, positive-definite, $L_{2}-$ inner product on $\mathfrak{g}$:
if $X = X(x) \partial_{x}$ and $Y = Y(x) \partial_{x}$, then
$$ \langle X\, ,\, Y\rangle = \int_{S^{1}} X(x) Y(x) d\,x  \ .   $$
{\color{black} We use this product to single-out a convenient dual space $\mathfrak{g}^{\prime}$ as in the
beginning of this subsection.}
Extending such a product complex-linearly to $\text{Vect}(S^{1})_{\mathbb{C}}$ we have
$$
\langle l_{m}\, , \, l_{n}\rangle  = 2 \pi \delta_{m,-n} = - \langle L_{m}\, ,\, L_{n}\rangle\; ,
\quad \quad (m,n) \in  \mathbb{Z}^{2}\; .
$$
We fix a finite sequence of real numbers $\mathfrak{c} = \{c_k\}_{k=0}^N$ for some $N \in \mathbb{N}$ and we set
\begin{equation} \label{ac}
 A_{\mathfrak{c}} = \sum_{k=0}^N (-1)^{k} c_{k}\, \partial^{2k}_{x} \; .
\end{equation}
We observe that
$$ (\ X\ \vert\ Y\ )_{A_{\mathfrak{c}}} = \langle X\, ,\, A_{\mathfrak{c}} Y\rangle =
\sum_{k =0}^N c_{k}\, \langle \partial^{k}_{x} X\, ,\, \partial^{k}_{x} Y\rangle =
 \langle A_{\mathfrak{c}} X\, ,\,  Y\rangle $$
for every $X$ and $Y$, and therefore in terms of the basis for $\text{Vect}(S^{1})_{\mathbb{C}}$ we have
$$
(\ l_{m}\ \vert\ l_{n}\ )_{A_{\mathfrak{c}}} =  \sum_{k =0}^N (-1)^k  c_{k}\ m^{k}n^k \langle l_{m}\, , \, l_{n}
\rangle = 2 \pi \delta_{m,-n} \sum_{k =0}^N  c_{k}\ m^{2k}  \; ,
$$
for $m,n \neq 0$, while $(\ l_{0}\ \vert\ l_{n}\ )_{A_{\mathfrak{c}}} = c_0 \langle \partial_x , z^n
\partial_x \rangle = 2 \pi c_0 \delta_{n,0}$.
It follows easily that {\em a symmetric operator $A_{\mathfrak{c}}$ is an inertia operator if and only
if $(\ l_{m}\ \vert\ l_{-m}\ )_{A_{\mathfrak{c}}}$ is different from zero for
every $m$ in $\mathbb{Z}$}.

\smallskip

We are ready to study the geometric interpretation of our equations (\ref{ch2n})\,. First of all, we note that
our operators $A_{2n}$ introduced in (\ref{ion}), are indeed inertia operators. We write $A_{2n}$ as
$$
A_{2n} = (-1)^n (-1) \partial_x^{2n} + \sum_{j=1}^{n-1} (-1)^j (-2) \partial_x^{2j} - 1\; ,
$$
so that, using the notation introduced in (\ref{ac}) we have $c_0=-1$, $c_j =-2$ for $j=1, \cdots,n-1$,
$c_n =-1$. Then, $(\ l_{0}\ \vert\ l_{0}\ )_{A_{\mathfrak{c}}} \neq 0$ ,
$(\ l_{\pm 1}\ \vert\ l_{\pm 1}\ )_{A_{\mathfrak{c}}} \neq 0$, and for $m \neq 0, \pm 1$,
$$
\sum_{k =0}^n  c_{k}\ m^{2k} = -1 - m^{2n} -2 \left( \frac{m^{2n}-1}{m^2 -1} - 1 \right) =
1 - m^{2n} - 2 \, \frac{m^{2n}-1}{m^2 -1}
$$
which is different from zero as well. Now we compute $ad_{A_{2n}}$ using (\ref{acc}); we use `` $'$ " to indicate
derivative with respect to $x$:
\begin{eqnarray*}
\langle [X,Y],A_{2n}(Z) \rangle & = & \int_{S^1} (XY' - X'Y)A_{2n}(Z) dx
\; = \; - \int_{S^1} Y \left[ (X A_{2n}(Z))' + X' A_{2n}(Z) \right] dx \\
& = & - \langle Y , (X A_{2n}(Z))' + X' A_{2n}(Z) \rangle \; = \;
- \langle Y , A_{2n} A_{2n}^{-1}\{(X A_{2n}(Z))' + X' A_{2n}(Z) \} \rangle\; ,
\end{eqnarray*}
so that (\ref{acc}) yields
\begin{equation} \label{acc1}
ad_{A_{2n}}(X)Z = A_{2n}^{-1} \left\{ (X A_{2n}(Z))' + X' A_{2n}(Z) \right\}
= A_{2n}^{-1} \left\{ 2 X' A_{2n}(Z) + X A_{2n}(Z)' \right\} \; .
\end{equation}
This formula implies that Equation (\ref{euler}) in the case $n=1$ is precisely the Camassa-Holm equation,
while the case $n=2$ gives the equation
$$
(-\partial_x^4 + 2 \partial_x^2 - 1)X_t - 2 X_x X_{xxxx} +4 X_x X_{xx} - 3X X_x - X X_{xxxxx} + 2 X X_{xxx} =0\; ,
$$
which {\em is not} our fifth order CH-type equation (\ref{ch410}). In order to find (\ref{ch410}) in the
present framework we write it as in (\ref{CHtype}), this is,
\begin{equation} \label{ch4'}
2 \,(-X_{xx} + X)_x\, m + m_t + (-X_{xx} + X)\, m_x = 0 \; , \quad \quad  m = A_4(X) \; .
\end{equation}
Comparing (\ref{ch4'}) and (\ref{acc1}), we see that our fifth order CH-type equation can be written, in
geometrical terms, as
\begin{equation*} \label{fcht}
{\color{black} \frac{d}{dt}\,X = - ad_{A_4}(A_2(X))\cdot X \; .  }
\end{equation*}
{\color{black}
%This equation is a Hamiltonian equation with respect to the standard Poisson bracket on
%$\mathfrak{g}^{\prime}$ with Hamiltonian function
%$$
%H(X) = - \frac{1}{2}\int_{S^1} (X^2 + X_x^2) dx \; .
%$$
More generally, Equation (\ref{acc1}) implies that our $(2n+1)$th order CH-type equation (\ref{ch2n}) can be
written as
\begin{equation*}
{\color{black} \frac{d}{dt}\,X = ad_{A_{2n}}(C_{2n}(X))\cdot X \; .   }
\end{equation*}
%This equation is also Hamiltonian; the corresponding Hamiltonian function is
%$$
%H(X) =  \frac{1}{2}\int_{S^1} (X^2 + (\partial_x X)^2 + \cdots + (\partial_x^{n-1} X)^2\, ) dx \; .
%$$
}
}

\subsection{Camassa-Holm type equations and classical theory of surfaces}

In this final subsection we go back to our geometric discussion of Section 1. Our equations (\ref{ch4'}) and  
(\ref{ch2n}) describe surfaces of constant Gaussian curvature $K=-1$. As we have already pointed out, 
{\color{black}
%in the following sense, see Chern and Tenenblat's \cite{reyes:ChT} or the later review \cite{reyes:keti}:
%
%\noindent If $\omega^{1} = (X_{2n\;21} + X_{2n\;12})dx +
%(T_{2n\;21} + T_{2n\;12})dt$, $\omega^{2} = 2 X_{2n\;11}dx + 2 T_{2n\;11}dt$, and
%$\omega^{3} = (X_{2n\;21} - X_{2n\;12})dx + (T_{2n\;21} - T_{2n\;12})dt$, in which $X_{2n}$ and $T_{2n}$ are given
%by (\ref{xchn}) and (\ref{tchn}) and $X_{2n\;ij}$, $T_{2n\;ij}$ denote the $(i,j)$ entry of
%$X_{2n}$ and $T_{2n}$ respectively, then
%\begin{equation}
%d {\omega}^{1} = {\omega}^{3} \wedge {\omega}^{2}, ~ ~ ~ ~ d {\omega}^{2} = {\omega}^{1}
%\wedge {\omega}^{3}, ~ ~ ~ ~ \mbox{ and } ~ ~ ~ ~
%d {\omega}^{3} = {\omega}^{1} \wedge {\omega}^{2}  \label{structure0}
%\end{equation}
%whenever $u(x,t)$ solves (\ref{ch2n}). If $\omega^1(u(x,t)) \wedge \omega^2(u(x,t)) \neq 0$, these structure
%equations say that the domain of $u(x,t)$ has the structure of a surface of constant Gaussian curvature equal to
%$-1$, with metric $({\omega}^{1})^{2} + ({\omega}^{2})^{2}$ and
%connection one--form ${\omega}_{12} = {\omega}^{3}$.
%
the importance of this observation is that the methods used in Section 2 for obtaining conservation laws and
pseudo-potentials originate within the geometry of pseudo-spherical surfaces, as noted by Chern and Tenenblat in
\cite{reyes:ChT}. Later papers on these topics are \cite{er_cl,reyes:keti}, and \cite{reyes:R4} for the particular
case of the Camassa-Holm equation.

A recent endeavour within the theory of equations describing pseudo-spherical surfaces is to
investigate local isometric immersions into $E^3$ of the pseudo-spherical surfaces described intrinsically by
solutions of equations such as (\ref{ch2n}). It is a classical result that every pseudo-spherical surface can
be locally immersed in $E^3$, and that the existence of such immersion is due to the fact that one can find
further one-forms
$$
\omega_{13} = a\, \omega_1 + b\, \omega_2\; , \quad \quad \omega_{23} = b\, \omega_1 + c\, \omega_2\; ,
$$
satisfying the equations
\begin{equation} \label{st20}
d \omega_{13} = \omega_{12} \wedge \omega_{23}\; , \quad \quad d \omega_{23} = -\omega_{12} \wedge \omega_{13}
\; , \quad \quad a\,c - b^2 = -1 \; .
\end{equation}
Generally speaking, the functions $a,b,c$ are determined by solving differential equations, and therefore
it is quite surprising that (for one-forms associated to differential equations describing pseudo-spherical
surfaces) in some cases they can be expressed in closed form as functions of the independent
variables and of a {\em finite number} of derivatives of the dependent variable $u$, see for instance
\cite[p. 1650021-4]{CSK}. Even more, it has been noticed that in some important instances ({\em e.g.} the
Degasperis-Procesi equation), the functions $a,b,c$
{\em depend  only on the independent variables $x,t$}, see \cite[Theorem 1.1]{CSK}, while (up to a technical
condition) it is not possible to find functions $a,b,c$  depending on $x,t$ and at most a finite number of
derivatives of $u$ in the case of the Camassa-Holm equation. Thus, it is very natural to ask whether or not we can
find functions $a,b,c$ depending on finite order jets for our equations (\ref{ch2n}). Our first result
concerns the fifth order CH-type equation (\ref{ch410}).

\begin{proposition}
\begin{enumerate}
\item
The fifth order CH-type equation $(\ref{ch410})$ describes pseudo-spherical surfaces with associated one-forms
\begin{eqnarray}
\omega^1 = \left(\frac{\lambda}{2}-m +\frac{1}{2 \lambda}\right)dx +\left(v m +\frac{\lambda  v }{2}-
\frac{\lambda^{2}}{2}-\frac{1}{2}-\frac{v}{2 \lambda}\right)dt \; , \quad \quad \omega^2 = -v_{x} dt\; , \quad
\label{st301} \\
\omega^3 = \left(-\frac{\lambda}{2}+m +\frac{1}{2 \lambda}\right)dx +\left(-v m -\frac{\lambda  v}{2}+
\frac{\lambda^{2}}{2}-\frac{1}{2}-\frac{v}{2 \lambda}\right)dt \;  \quad \label{st302}
\end{eqnarray}
in which
$$
v = u - u_{xx} \; , \quad \quad m = v - v_{xx}\; .
$$
\item There are no functions $a,b,c$ depending only on independent variables $x,t$ such that the one forms
$\omega^1, \omega^2, \omega_{12} = \omega^3$ given by $(\ref{st301})$, $(\ref{st302})$ and
$$
\omega_{13} = a\, \omega_1 + b\, \omega_2\; , \quad \quad \omega_{23} = b\, \omega_1 + c\, \omega_2
$$
satisfy the structure equations $(\ref{o4})$ and $(\ref{st20})$.
\end{enumerate}
\end{proposition}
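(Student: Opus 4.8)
The plan is to treat the two parts separately, since the first is a direct (if slightly tedious) verification while the second is where the real content lies.

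For part (1), I would check the three structure equations $(\ref{o4})$ head-on. Writing $\omega^1 = A\,dx + B\,dt$, $\omega^2 = -v_x\,dt$ and $\omega^3 = C\,dx + D\,dt$ with the coefficients read off from $(\ref{st301})$--$(\ref{st302})$, each identity reduces to matching the single coefficient of $dx\wedge dt$. The equation $d\omega^2 = \omega^1\wedge\omega^3$ amounts to $AD - BC = -v_{xx}$; using $A+C = 1/\lambda$ together with the splittings of $B$ and $D$ into their $\lambda$-even and $\lambda$-odd parts, one finds $AD-BC = m - v = -v_{xx}$, so this holds identically, independently of the PDE. The remaining two equations $d\omega^1 = \omega^3\wedge\omega^2$ and $d\omega^3 = \omega^1\wedge\omega^2$ each collapse, after the $\lambda$ and $\lambda^{-1}$ terms cancel, to the single scalar relation $m_t + v\,m_x + 2m\,v_x = 0$, which is exactly $(\ref{FOCHInt1})$. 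Hence the one-forms satisfy $(\ref{o4})$ precisely on solutions, establishing part (1). Equivalently, one may note that these one-forms are the $sl(2,\R)$ repackaging of the Lax matrices $(\ref{xch40})$--$(\ref{tch40})$, whose zero-curvature condition was already shown to be equivalent to $(\ref{ch410})$.

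For part (2) I would argue by contradiction. Suppose $a,b,c$ depend only on $x,t$ and that $\omega_{13} = a\omega_1 + b\omega_2$, $\omega_{23} = b\omega_1 + c\omega_2$ satisfy $(\ref{st20})$. Expanding $d\omega_{13}$ and $d\omega_{23}$ by means of $(\ref{o4})$ (so that $d\omega_1 = \omega_{12}\wedge\omega_2$, $d\omega_2 = \omega_1\wedge\omega_{12}$ and $d\omega_{12} = \omega_1\wedge\omega_2$), the Codazzi equation $d\omega_{13} = \omega_{12}\wedge\omega_{23}$ becomes, as the coefficient of $dx\wedge dt$,
\[
(a_x B - a_t A) - b_x v_x - (a-c)\,C v_x - 2b\,(CB - DA) = 0 \; ,
\]
and $d\omega_{23} = -\omega_{12}\wedge\omega_{13}$ gives the analogous relation. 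Here I would reuse the part~(1) computation in the form $CB - DA = v_{xx}$, which is free of $\lambda$, so that the last term reads simply $-2b\,v_{xx}$.

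The decisive step is to collect powers of $\lambda$. Since $a,b,c$ carry no $\lambda$-dependence while $A,B,C,D$ are Laurent polynomials in $\lambda$, each power must vanish on its own. The $\lambda^2$-coefficient (which comes only from $a_x B$) forces $a_x = 0$; the $\lambda^{1}$- and $\lambda^{-1}$-coefficients give $a_t = (a-c)v_x$ and $a_t = -(a-c)v_x$, whence $(a-c)v_x \equiv 0$, and since $v_x = u_x - u_{xxx}$ is a functionally independent jet variable this yields $a = c$ and $a_t = 0$; finally the $\lambda^0$-coefficient collapses to $-b_x v_x - 2b\,v_{xx} = 0$, and as $v_x$ and $v_{xx}$ are independent this forces $b \equiv 0$. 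Thus $a = c$ is necessarily a real constant and $b = 0$, so the Gauss equation $ac - b^2 = -1$ reads $a^2 = -1$, which is impossible. This contradiction proves part (2). The main obstacle is the bookkeeping in the $\lambda$-expansion: one must confirm that $CB-DA$ is genuinely $\lambda$-independent and that the cross terms in $a_x B - a_t A$ and $(a-c)Cv_x$ distribute across $\lambda^2,\lambda,\lambda^0,\lambda^{-1}$ exactly so that the cascade $a_x=0\Rightarrow a=c,\ a_t=0\Rightarrow b=0$ closes up. Once that is secured, the clash with the curvature constraint $ac-b^2=-1$ is immediate, mirroring the known obstruction for the Camassa--Holm equation recorded in \cite{CSK}.
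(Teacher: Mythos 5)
Your proof is correct. For part (1) you do just what the paper does --- a direct verification that $d\omega^1 = \omega^3\wedge\omega^2$ and $d\omega^3 = \omega^1\wedge\omega^2$ each collapse to $m_t + v\,m_x + 2m\,v_x = 0$, while $d\omega^2 = \omega^1\wedge\omega^3$ holds identically via $AD - BC = m - v = -v_{xx}$ --- so there is nothing substantive to compare there. For part (2) you reach the same contradiction by a genuinely different mechanism. The paper writes down \emph{both} Codazzi equations (its Equations M and N) and extracts the constraints by differentiating with respect to the jet variables: $\partial_m\partial_{v_x}\mathrm{M}$ gives $a=c$, $\partial_{v_x}\mathrm{M}$ gives $b_x=0$, Equation N gives $b=0$ and $c_x=0$, Equation M then gives $c_t=0$, the whole cascade being checked with MAPLE 2021; the Gauss equation $ac-b^2=-1$ then fails. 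You instead exploit the $\lambda$-grading: because $a,b,c$ are $\lambda$-free, the coefficients $A,B,C$ are Laurent polynomials in $\lambda$, and $AD-BC$ is $\lambda$-free, the \emph{single} equation M splits into its $\lambda^{2}$, $\lambda^{\pm 1}$ and $\lambda^{0}$ strata, which successively force $a_x=0$, then $a=c$ and $a_t=0$, then $b=0$. What your route buys: it needs only one of the two Codazzi equations, it is a transparent hand computation with no computer algebra, and it isolates exactly where the obstruction lives (the $\lambda^{2}$ and $\lambda^{\pm1}$ strata kill $a$ and force $a=c$; the $\lambda$-free stratum kills $b$). What the paper's route buys: it avoids the Laurent bookkeeping and reuses the same computational template as its Proposition 3 on nonexistence of nonlocal symmetries. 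One point you should state explicitly, since your argument (like the paper's) leans on it silently: the identities must hold on \emph{every} solution, and at a fixed point $(x,t)$ the values of $m$, $v_x$ and $v_{xx}$ can be prescribed arbitrarily and independently by a suitable choice of solution; this jet-independence is what legitimizes concluding $a=c$ from $(a-c)v_x\equiv 0$ and $b=b_x=0$ from $b_x v_x + 2b\,v_{xx}\equiv 0$, just as it legitimizes the paper's differentiations with respect to $v_x$ and $m$.
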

\begin{proof}
Item 1 is a consequence of Theorem 1 in Section 1. It is also a straightforward direct computation: we simply note 
that Equation (\ref{ch410}) can be written as in (\ref{FOCHInt1}), that is, as the system
\begin{equation}  \label{FOCHInt2}
-2\,m\,v_x - m_x\,v = m_t\; , \quad \quad v = u - u_{xx} \; , \quad \quad m = v - v_{xx}\; ,
\end{equation}
and we check that the one-forms appearing in (\ref{st301}) and (\ref{st302}) satisfy the structure equations
(\ref{o4}). Item 2 is proven by a strategy similar in spirit to the
one used in the proof of Proposition 3: let us assume that there exist functions $a,b,c$ depending only on $x,t$
such that Equations (\ref{st20}) hold. We write down (\ref{st20}) and we obtain two equations that have to be
satisfied identically whenever $u(x,t)$ solves (\ref{FOCHInt2}), the first one corresponding to
$d \omega_{13} = \omega_{12} \wedge \omega_{23}$ and the second one corresponding to
$d \omega_{23} = -\omega_{12} \wedge \omega_{13}$. We will simply call them Equations M and N respectively.
The sketch that follows has been checked using MAPLE 2021:

Taking derivatives of M with respect to $v_x$ and then with respect to $m$ we get $a(x,t)=c(x,t)$. Replacing into
M and taking derivative with respect to $v_x$ we find $b_x =0$, while replacing $a(x,t)=c(x,t)$ into N and then
differentiating  with respect to $v_x$ and $m$ yields $b=0$. Replacing into N once again gives us $c_x=0$, and
then replacing into M we find $c_t=0$. Now we use the Gauss equation $ac-b^2 =-1$ and we conclude that $a,b,c$
cannot exist.
\end{proof}

Now we consider Equations (\ref{ch2n}) in full generality. In order to study the local immersion problem we
proceed differently than in the interesting paper \cite{CSK}. Instead of trying to find one-forms $\omega_{13}$
and $\omega_{23}$ satisfying (\ref{st20}) directly, we simply construct a rather explicit local immersion, taking
advantage of the following observation appearing in \cite{KT}:

\begin{lemma} \label{nk}
There exists a local diffeomorphism $\Psi : V \subseteq \mathbb{R}^2 \rightarrow \overline{W}$, where
$\overline{W}$ is a subset of the Poincar\'e upper half plane, and a smooth function $\mu$ such that
\begin{eqnarray*}
\Psi^\ast \theta^1 & = & \cos\mu\, \omega^1 + \sin \mu\, \omega^2 \\
\Psi^\ast \theta^2 & = & -\sin\mu\, \omega^1 + \cos \mu\, \omega^2 \\
\Psi^\ast \theta^3 & = & \omega^3 + d \mu
\end{eqnarray*}
where
$$
\theta^1 = \frac{d \overline{x}}{\overline{t}}\; , \quad \quad \theta^2 = \frac{d \overline{t}}{\overline{t}}\; ,
\quad \quad \theta^3 = \frac{d \overline{x}}{\overline{t}}\; .
$$
\end{lemma}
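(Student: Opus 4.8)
The content of this lemma is a classical \emph{development} statement: wherever $\omega^1\wedge\omega^2\neq 0$, the forms $\omega^1,\omega^2$ are an orthonormal coframe for a metric of constant Gaussian curvature $-1$, with $\omega^3$ playing the role of the associated Levi--Civita connection form, so that the structure equations $(\ref{o4})$ say precisely that this metric is pseudo-spherical. The lemma then asserts that such an abstract pseudo-spherical surface can be locally developed onto the Poincar\'e upper half plane $\mathbb{H}^2$, the angle $\mu$ encoding the freedom in the choice of orthonormal frame. The plan is to recognize $(\ref{o4})$ as a Maurer--Cartan equation and invoke the Cartan--Darboux theorem.

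First I would fix the model. On $\mathbb{H}^2$ the forms $\theta^1,\theta^2,\theta^3$ are a coframe satisfying $(\ref{o4})$ with $\theta^1=\theta^3=d\overline{x}/\overline{t}$ and $\theta^2=d\overline{t}/\overline{t}$, as one checks directly ($d\theta^1=\theta^3\wedge\theta^2$, $d\theta^2=\theta^1\wedge\theta^3=0$, $d\theta^3=\theta^1\wedge\theta^2$). In fact these are the components of the Maurer--Cartan form of the group $G$ of orientation-preserving rigid motions of $\mathbb{H}^2$, whose oriented orthonormal frame bundle is a principal $SO(2)$-bundle over $\mathbb{H}^2$, the vertical $SO(2)$-direction being exactly the connection form $\theta^3$. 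I would then form, on $V\subseteq\R^2$, the $\mathfrak{g}$-valued one-form $\eta=\omega^1 E_1+\omega^2 E_2+\omega^3 E_3$, where $E_1,E_2,E_3$ is the basis of $\mathfrak{g}$ dual to $(\theta^1,\theta^2,\theta^3)$, and observe that $(\ref{o4})$ is equivalent to the Maurer--Cartan equation $d\eta+\tfrac12[\eta,\eta]=0$. Shrinking $V$ to a simply connected neighbourhood, the Cartan--Darboux theorem produces a smooth map $F:V\to G$, unique up to left translation, with $F^\ast\omega_{MC}=\eta$; composing with the projection $\pi:G\to G/SO(2)=\mathbb{H}^2$ gives $\Psi=\pi\circ F:(x,t)\mapsto(\overline{x},\overline{t})$, while the residual $SO(2)$-coordinate of $F$ supplies the angle $\mu$.

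Next I would read off the three relations from $F^\ast\omega_{MC}=\eta$ by splitting $\omega_{MC}$ into its horizontal part (which pulls back to $\theta^1,\theta^2$ under $\pi$) and its vertical $SO(2)$-part (the connection, i.e.\ $\theta^3$). Since the horizontal part is an $\R^2$-valued form on which $SO(2)$ acts by rotation, passing from the frame singled out by $F$ to the coframe $\omega^1,\omega^2$ rotates $\Psi^\ast\theta^1,\Psi^\ast\theta^2$ by $\mu$, which yields the first two identities; the vertical part transforms by the standard rule for connection forms under a rotation of frame, contributing precisely the term $d\mu$ and giving $\Psi^\ast\theta^3=\omega^3+d\mu$. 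That $\Psi$ is a local diffeomorphism follows because $\omega^1\wedge\omega^2\neq 0$ forces $\Psi^\ast(\theta^1\wedge\theta^2)\neq 0$, so $d\Psi$ is nonsingular.

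The step I expect to be the main obstacle is the careful bookkeeping of this last paragraph: matching the chosen basis $E_i$, the $SO(2)$-action, and the sign conventions built into $(\ref{o4})$ so that the rotation appears exactly as stated and the connection correction is $+d\mu$ rather than $-d\mu$. A cleaner but less explicit alternative is to bypass the group picture altogether: use Cartan's theorem that any two surfaces of equal constant curvature are locally isometric to obtain $\Psi$ directly as a local isometry onto $\overline{W}\subseteq\mathbb{H}^2$, note that $\{\Psi^\ast\theta^1,\Psi^\ast\theta^2\}$ and $\{\omega^1,\omega^2\}$ are then two orthonormal coframes for the same metric and hence differ by a pointwise rotation $\mu$, and invoke the transformation law of the Levi--Civita connection form under such a rotation to recover the third relation.
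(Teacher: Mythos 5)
Your argument is correct, but it is not the route the paper takes: the paper offers no proof of its own, deferring entirely to Kamran--Tenenblat \cite[pp. 90--91]{KT}, where the lemma is proved by applying the Frobenius theorem to an explicit Pfaffian system. Concretely, in that proof one first obtains $\mu$ as a solution of the completely integrable Pfaffian equation $\cos\mu\,\omega^1+\sin\mu\,\omega^2-\omega^3-d\mu=0$ (whose integrability condition is exactly the structure equations (\ref{o4})), and then integrates the resulting closed one-forms $\cos\mu\,\omega^1+\sin\mu\,\omega^2$ and $-\sin\mu\,\omega^1+\cos\mu\,\omega^2$ to manufacture the coordinates $\overline{x},\overline{t}$, hence $\Psi$. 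Your proposal packages the same integrability into the Maurer--Cartan equation for $\mathfrak{so}(2,1)\cong\mathfrak{sl}(2,\mathbb{R})$ and invokes Cartan--Darboux on the isometry group of $\mathbb{H}$, then projects to $G/SO(2)$; your fallback via the local isometry of constant-curvature surfaces plus the transformation law of the connection form under a rotation of coframe is equally valid. What your group-theoretic development buys is conceptual transparency: uniqueness of $\Psi$ up to a hyperbolic isometry is automatic, and $\mu$ is explained as the $SO(2)$ fiber coordinate, which makes the rotation in the first two identities and the $+d\mu$ correction in the third inevitable rather than a matter of bookkeeping. What the Frobenius/Pfaffian argument of \cite{KT} buys --- and the reason the paper leans on it --- is that it produces the system $\cos\mu\,\omega^1+\sin\mu\,\omega^2=\omega^3+d\mu$ explicitly, which the paper then converts, via $\Gamma=\tan(\mu/2)$, into the Riccati equation (\ref{gcl}) for the quadratic pseudo-potential, thereby linking the isometric immersion directly to the local conservation laws of (\ref{ch2n}). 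Both arguments require exactly the hypotheses you use: $\omega^1\wedge\omega^2\neq 0$ (so that the forms define a pseudo-spherical metric and $\Psi$ is a local diffeomorphism) and a restriction to a simply connected neighbourhood.
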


The one-forms $\theta^1, \theta^2$ give the standard metric on the Poincar\'e upper half plane (hereafter denoted
by $\mathbb{H}$) $ds^2 = \theta^1 \otimes \theta^1 + \theta^2 \otimes \theta^2$, and $\theta^3$ is the
corresponding connection one-form. The proof of this lemma is in \cite[pp. 90-91]{KT}. Now we note that
 $\mathbb{H}$ can be immersed into $E^3$ explicitly. A well known immersion is given by
the function $F : U \subseteq \mathbb{H} \rightarrow E^3$, $U = \{ (\overline{x}, \overline{t}) \in
\mathbb{H} : \overline{t} > 1 \}$, with
$$
F(\overline{x} , \overline{t} )=( f(\overline{t}) \cos \overline{x}\, ,\, f(\overline{t}) \sin \overline{x}\, ,\,
g(\overline{t}) )\; ,
$$
where
$$
f(\overline{t}) = \frac{1}{\overline{t}} \; \quad \quad g(\overline{t}) =
\ln \left( \sqrt{\overline{t}^2 -1} + \overline{t} \right) - \frac{\sqrt{\overline{t}^2 -1}}{\overline{t}}\; .
$$
Thus, {\em a local isometric immersion from the pseudo-spherical structure on $V$ induced
by our Ch-type equation $(\ref{ch2n})$ into $E^3$ is given by the composition
$\Phi = F \circ \Psi$. } Certainly, this immersion is in principle highly ``nonlocal", since
it depends on the diffeomorphism $\Psi$ that is found by means of the Frobenius theorem, see \cite[p. 91]{KT}.
However, we believe that this nonlocality is interesting in its own right:

The first and third equations appearing in Lemma \ref{nk} imply that we can obtain the function $\mu$ via the
Pfaffian system
$$
\cos\mu\, \omega^1 + \sin\mu\, \omega^2 = \omega^3 + d \mu\; .
$$
The change of variables $\Gamma = \tan(\mu/2)$ transforms this equation into the Riccati system
$$
2 d \Gamma = (\omega^1 - \omega^3) + 2 \Gamma \omega^2 - \Gamma^2 (\omega^1 + \omega^3) \; ,
$$
and using the explicit expressions for the one-forms $\omega^i$, $i=1,2,3$, appearing at the beginning of this
subsection we obtain that this system is equivalent to
$$
2 \Gamma_x = \lambda + 2 A_{2n}(u) - \frac{1}{\lambda}\Gamma^2
$$
and an equation for $\Gamma_t$ which we will not write down. This equation for $\Gamma_x$ is precisely the
quadratic pseudo-potential equation (\ref{gcl}) determining local conservation laws for the CH-type equation
(\ref{ch2n})! Also, we can check that if we write $\Psi(x,t) = (\phi(x,t) , \psi(x,t))$, then
$\ln(\psi)$ is a potential for the local conservation laws of (\ref{ch2n}), while $\phi$ is a further potential
depending on $\psi$.

Thus, local isometric immersions of {\color{black} pseudo-spherical surfaces determined by solutions to
our CH-type equations as explained at the beginning of this subsection} are, essentially, constructed via their local conservation laws.
}

\subsection*{Acknowledgements}
E.G.R's work has been partially supported by the Universidad de
Santiago de Chile through the DICYT grant \# 041533RG and by the FONDECYT grant \#1201894.
The third author  thanks
the UT President's Endowed Professorship (Project \# 450000123).
%The research project is supported by National Natural Science Foundation of China(Grant Nos. 11772007) and also supported by Beijing Natural Science Foundation
%and
%Basic and Interdisciplinary Scientific Research Program (Project \# J202134) of Beijing University of Technology
 %High-level Foreign Experts Funding Program of Beijing (Project \# J202134)  for their partial support.

\end{document}